\documentclass[11pt, oneside, reqno]{amsart}
\usepackage[letterpaper, margin=1in]{geometry}
\usepackage{parskip}
\usepackage{eucal}
\usepackage{amsmath,amssymb,amsfonts,amsthm,mathrsfs}
\usepackage{tikz}
\usepackage{graphicx, array}
\usepackage{enumerate}
\usepackage{epsfig}
\usepackage{float}
\usepackage{color}
\usepackage{xcolor}
\colorlet{BLUE}{blue}
\usepackage{enumitem}
\usepackage[colorlinks=true, urlcolor=blue, linkcolor=red]{hyperref}

\newtheorem{theorem}{Theorem}[section]
\newtheorem{lemma}{Lemma}[section]

\newtheorem{proposition}{Proposition}[section]
\theoremstyle{definition}
\newtheorem{definition}{Definition}[section]

\newtheorem{remark}{Remark}[section]
\numberwithin{equation}{section}

\newcommand{\vet}[4]{\begin{bmatrix}#1 \\ #2 \\#3\\#4\end{bmatrix}}

\newcommand{\sech}{\mathrm{sech}}
\def\longdelete#1{}

\begin{document}

\author[Hsiao, T.-Y.]{Ting-Yang Hsiao}
\address{International School for Advanced Studies (SISSA),
Via Bonomea 265, 34136 Trieste, Italy}
\email{thsiao@sissa.it}

\author[Liang, Z.]{Zhengjun Liang}
\address{Department of Mathematics, University of Michigan, 530 Church St, Ann Arbor, MI 48104}
\email{jspliang@umich.edu}

\author[To, G.]{Giang To}
\address{Lund University, Sweden}
\email{giang.to@math.lu.se}

\author[Zhang, Y.]{Ye Zhang}
\address{International School for Advanced Studies (SISSA),
Via Bonomea 265, 34136 Trieste, Italy}
\email{yezhang@sissa.it}

%\thanks{Corresponding authors: Ting-Yang Hsiao and Ye Zhang.}

\title[Existence of pure capillary solitary waves in constant vorticity flows]
{Existence of pure capillary solitary waves in constant vorticity flows}

\date{\today}
\subjclass[2020]{76B15, 76B25, 76B45, 35Q35, 37K06}
\keywords{Capillary waves, solitary waves, water waves, constant vorticity,
Korteweg--de Vries limit}

% \date{\today}

%We prove the existence of pure capillary solitary waves in $2D$ finite-depth flows with nonzero constant vorticity. This resolves the remaining open case in the pure-capillary regime, complementing the recent nonexistence result of Ifrim–Pineau–Tataru–Taylor for irrotational flows. Our analysis is based on a spatial–dynamics formulation of the travelling-wave Euler equations, followed by a nonlinear coordinate transformation that simultaneously straightens the free boundary and rectifies the symplectic structure. This yields an infinite-dimensional reversible Hamiltonian system whose linearization possesses a two-dimensional center subspace along a distinguished curve in the vorticity–capillary  parameter space. Using a parameter-dependent version of Mielke’s center-manifold theorem, we obtain a finite-dimensional canonical Hamiltonian system governing the near-critical dynamics. After expanding the reduced Hamiltonian to cubic order and performing a long-wave rescaling, we derive a KdV-type normal form admitting a reversible homoclinic orbit. This orbit persists in the full water-wave problem and produces the desired solitary waves. Our result provides the first rigorous construction of pure capillary solitary waves with constant vorticity in finite depth, establishing a sharp contrast between the rotational and irrotational settings and clarifying the dispersive mechanism responsible for the bifurcation.

\begin{abstract}
We prove that the finite-depth pure-capillary rigidity mechanism in the
irrotational water-wave problem is destroyed by a suitable constant-vorticity
critical shear. More precisely, we construct small-amplitude finite-depth pure
capillary solitary waves for the two-dimensional free-boundary Euler equations
with nonzero constant vorticity and zero gravity. The waves bifurcate from a
critical shear flow whose relative horizontal velocity vanishes at the bed, so
that the standard Dubreil--Jacotin no-stagnation formulation is singular at the
asymptotic state. We therefore formulate the traveling-wave problem directly as
a Hamiltonian spatial-dynamics system in flattened Euler variables, remove a
nonlinear boundary condition from the domain of the vector field, and verify the
spectral and resolvent hypotheses needed for a two-dimensional center-manifold
reduction. A parameter-dependent Darboux transformation and a cubic expansion
of the reduced Hamiltonian yield, under a long-wave scaling, a stationary KdV
equation. Its reversible homoclinic orbit persists under the full reduced
dynamics and gives a family of small-amplitude waves of depression.
\end{abstract}

% Might need to shorten the abstract

\maketitle

%\setcounter{tocdepth}{1}
%\tableofcontents

\section{Introduction}

Pure-capillary solitary waves in finite depth exhibit a striking rigidity in
the irrotational setting: localized traveling waves do not exist. A natural
question is whether this obstruction is caused by the absence of gravity itself,
or rather by the irrotational no-shear geometry. In this paper we show that the
latter is the case. We construct finite-depth pure-capillary solitary waves
with nonzero constant vorticity, bifurcating from a critical shear flow whose
relative horizontal velocity vanishes at the bed.

We study solitary waves for the two-dimensional
finite-depth water-wave problem with surface tension and nonzero constant
vorticity, in the absence of gravity. Let the fluid occupy a time-dependent
domain $\mathcal D(t)\subset\mathbb R^2$, with flat bottom $\{y=-d\}$ and free
surface $\Gamma(t)$. We assume unit density and nonzero constant vorticity
$\omega$. The velocity field $\mathbf u$ and pressure $p$ solve the
incompressible Euler equations
\begin{subequations}\label{eq:water-wave-system}
\begin{equation}\label{eq:euler-freebdry}
\begin{aligned}
&\mathbf u_t+\mathbf u\cdot\nabla \mathbf u=-\nabla p
        && \text{in }\mathcal D(t),\\
&\nabla\cdot\mathbf u=0,\
  \operatorname{curl}\mathbf u=\omega
        && \text{in }\mathcal D(t).
\end{aligned}
\end{equation}
The boundary conditions are
\begin{alignat}{2}
\mathcal V(\Gamma(t)) &= \mathbf u\cdot \mathbf n
        &\quad &\text{on }\Gamma(t), \label{kinematic_top}\\
0 &= \mathbf u\cdot \mathbf e_2
        &\quad &\text{on }\{y=-d\}, \label{kinematic_bottom}\\
p &= \sigma H
        &\quad &\text{on }\Gamma(t), \label{Young-Laplace}
\end{alignat}
\end{subequations}
where $\mathbf n$ is the outward unit normal to the free surface,
$\mathcal V(\Gamma(t))$ is the normal velocity of the moving boundary, $H$ is
the signed curvature of $\Gamma(t)$, and $\sigma>0$ is the surface tension
coefficient.

A \emph{traveling wave} solution of \eqref{eq:water-wave-system} is a solution
whose shape is stationary in a frame moving with constant horizontal speed
$c$. More precisely, for some $c\neq0$, the fluid domain, free surface,
velocity, and pressure satisfy
\begin{equation}
\mathcal D(t)=\mathcal D(0)+(ct,0),\qquad
\Gamma(t)=\Gamma(0)+(ct,0),
\end{equation}
and
\begin{equation}
\mathbf u(t,x,y)=\mathbf U(x-ct,y),\qquad
p(t,x,y)=P(x-ct,y).
\end{equation}
Equivalently, after introducing the moving coordinate $\xi=x-ct$, the solution
is independent of time in the $(\xi,y)$-variables. A traveling wave is called
a \emph{solitary wave} if the wave profile is localized and decays to an
equilibrium state at spatial infinity.

\subsection{Main result and interpretations}

Our main result constructs a family of small-amplitude finite-depth
pure-capillary solitary waves with nonzero constant vorticity. We consider
waves whose free surface is a graph
\begin{equation}
\Gamma(t)=\{(x,y):y=\eta(t,x)\},
\end{equation}
and, with slight abuse of notation, we also denote the traveling-wave profile
by $\eta(\xi)$.

\begin{theorem}[Existence of finite-depth pure-capillary waves]\label{thm:main}
Let
\begin{equation}
\frac{\sigma}{c^2d}>\frac13 .
\end{equation}
There exists $\varepsilon_0>0$ such that for every
$0<\varepsilon<\varepsilon_0$ satisfying
\begin{equation}
\frac{\omega d}{c}=1+\varepsilon ,
\end{equation}
the system \eqref{eq:water-wave-system} admits a $C^2$ solitary traveling
wave with speed $c$. Moreover, the solitary-wave profile has the asymptotic
expansion
\begin{equation}
\eta(\xi)=d\, \varepsilon\, Q\bigg(\varepsilon^{\frac{1}{2}}\bigg(\frac{\sigma}{c^2d}-\frac13\bigg)^{-\frac{1}{2}}\frac{\xi}{d}\bigg)+O(\varepsilon^2)
\end{equation}
where the remainder is uniform in the nondimensional traveling coordinate and
\begin{equation}
Q(X)=-3\sech^2\left(\frac{X}{2}\right)
\end{equation}
solves the stationary KdV equation
\begin{equation}
Q''=Q+\frac12 Q^2 .
\end{equation}
In particular, the leading-order wave is a wave of depression; see
Figure~\ref{fig:wave_profile}.
\end{theorem}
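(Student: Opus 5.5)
The plan is to follow the route sketched in the abstract: spatial dynamics, center-manifold reduction, and a KdV normal form.

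\emph{Step 1: formulation.} Nondimensionalise by $d$ and $c$, so the bed is $y=-1$, the speed is $1$, and the parameters are $\beta=\sigma/(c^2d)$ and $\Omega=\omega d/c=1+\varepsilon$. In the moving frame, the trivial state is the shear flow $U(y)=\Omega(y+1)-1$ relative to the wave. At $\Omega=1$ this relative velocity vanishes at the bed, so the stream-function/Dubreil--Jacotin formulation degenerates there. I will therefore avoid that formulation. Instead I flatten the domain via $y\mapsto (y+1)/(1+\eta)-1$ and write the steady Euler equations, with the Young--Laplace condition and the kinematic conditions, as an evolution in $\xi$:
\[
\partial_\xi u = L_\varepsilon u + N_\varepsilon(u).
\]
The state is $u=(\eta,\zeta,\text{velocity components},\text{conjugate momenta})$, where $\zeta$ is essentially $\eta'$ scaled by the surface-tension momentum. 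The phase space is a Hilbert space $X$ such as $\mathbb R^2\times L^2(-1,0)^2$. The system should be Hamiltonian with respect to a symplectic form derived from the flow-force/energy variational principle, and reversible under $\xi\mapsto-\xi$. One boundary condition is nonlinear (the dynamic one, which involves the pressure trace). I will absorb it by a nonlinear change of variables, so that the domain of $L_\varepsilon$ is a fixed linear subspace. The price is that the symplectic form becomes $u$-dependent, which is why a Darboux transformation is needed later.

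\emph{Step 2: spectral analysis and reduction.} I compute the dispersion relation for $L_\varepsilon$: $ik$ is an eigenvalue iff $D(k;\Omega,\beta)=0$. For the linear shear profile this relation is explicit, with a $k\tanh k$-type structure. Expanding at $k=0$ should give
\[
D(k)\approx a(\Omega-1)+b(\beta-\tfrac13)k^2+O(k^4).
\]
Consequently, at $\Omega=1$ and $\beta>1/3$, the imaginary axis contains only a double eigenvalue $0$ with a Jordan block, and for $\varepsilon>0$ this pair moves to $\pm\lambda$ with $\lambda\sim\sqrt{\varepsilon/(\beta-1/3)}$. I then verify the hypotheses of Mielke's parameter-dependent center-manifold theorem: the remaining spectrum is separated from $i\mathbb R$, and the resolvent bound $\|(L-ik)^{-1}\|\le C/(1+|k|)$ holds for large $|k|$. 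The resolvent bound comes from solving the ODE in $y$ explicitly and analysing the boundary determinant for large $k$. The outcome is a two-dimensional reversible invariant manifold, parametrised by $(q,p)$ in the Jordan basis.

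\emph{Step 3: normal form and persistence.} After a parameter-dependent Darboux change making the reduced symplectic form canonical, I Taylor expand the reduced Hamiltonian to cubic order:
\[
H=\tfrac12 p^2-\tfrac{\mu(\varepsilon)}2 q^2-\tfrac{\gamma}{6}q^3+\dots,
\qquad \mu\propto\varepsilon/(\beta-\tfrac13).
\]
The scaling $q=\varepsilon Q$, $X=\varepsilon^{1/2}(\beta-\tfrac13)^{-1/2}\xi$ then gives
\[
Q''=Q+\tfrac12 Q^2+O(\varepsilon^{1/2}).
\]
This equation has the reversible homoclinic orbit $Q=-3\,\sech^2(X/2)$. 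Its transversality within the zero energy level, together with reversibility, gives persistence: the unstable manifold of $0$ meets the symmetric section $\{P=0\}$ transversally, so the reversible homoclinic survives the perturbation. Reconstructing $\eta$ from $q$ through the center-manifold graph yields the stated expansion, with an $O(\varepsilon^2)$ uniform error. Regularity of the reduction gives $C^2$ profiles.

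\emph{Main obstacle.} I expect the hardest part to be Steps 1--2: building a Hamiltonian formulation that remains well posed despite the bed stagnation, and proving the resolvent estimate and the spectral gap uniformly near $\Omega=1$. Once that is done, the computation of the sign and the value of the cubic coefficient $\gamma$ (normalised to give $\tfrac12 Q^2$ and hence a depression) is lengthy but routine.
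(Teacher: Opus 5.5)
Your proposal is correct and follows essentially the same route as the paper: flattened Euler variables instead of Dubreil--Jacotin, a nonlinear change of variables that fixes the domain, Mielke's reduction at the Jordan-block eigenvalue $0$ using a large-frequency resolvent bound, a reversible parameter-dependent Darboux map, the cubic expansion with KdV scaling, and persistence of the reversible homoclinic through its crossing of $\{P=0\}$. A few minor points. In the paper the nonlinear boundary condition removed from the domain is the kinematic one, $\varphi_y(1)=W(\theta_y(1)-1)$, the Bernoulli condition being carried by the $z$-equation, and the resolvent bound comes from energy estimates rather than an explicit determinant. Your far-field shear should read $-(1+\Omega y)$, as written $\Omega(y+1)-1$ vanishes at the surface, not the bed. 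Finally, an $O(\varepsilon^{1/2})$ remainder in the scaled equation only gives an $O(\varepsilon^{3/2})$ error in $\eta$; to reach the stated $O(\varepsilon^2)$ you should use that the reduced Hamiltonian is even in $p$ (reversibility), which makes the scaled remainder $O(\varepsilon)$.
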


The pure-capillary problem is a limiting regime in which surface tension is
the only restoring mechanism at the free surface. This regime is natural at
small Bond number,
\begin{equation}
\mathrm{Bo}:=\frac{\rho g L^2}{\sigma}\ll1,
\end{equation}
and also in reduced-gravity or density-matched settings. Thus the assumption
$g=0$ should not be understood merely as a formal deletion of gravity from
ordinary macroscopic water waves. Rather, it isolates the part of the
free-boundary Euler dynamics governed by capillarity, vorticity, and finite
depth.

In the irrotational finite-depth pure-capillary problem, solitary waves are
known not to exist \cite{ifrim2022no}. Thus the irrotational nonexistence
theorem leaves open a sharp structural question: is the obstruction caused by
the absence of gravity, or by the irrotational no-shear geometry? Theorem~\ref{thm:main}
shows that the obstruction is not stable under the introduction of critical
shear. More precisely, pure capillarity by itself does not rule out localized
traveling waves once the flow is allowed to have a suitable shear. The
mechanism is geometric. Surface tension provides the restoring force, finite
depth provides the long-wave dispersion, and constant vorticity creates the
critical shear structure from which the solitary wave bifurcates.

\begin{figure}[!htbp]
\centering
\begin{tikzpicture}[scale=1.2]
% water fill below the profile
\fill[teal!10, domain=-4:4, samples=200]
plot (\x, {-0.9*(1/cosh(\x))^2}) -- (4,-3.15) -- (-4,-3.15) -- cycle;
\draw[line width=1.4pt, teal, domain=-4:4, samples=200, smooth]
    plot (\x, {-0.9*(1/cosh(\x))^2});
\end{tikzpicture}
\caption{Leading-order wave profile}
\label{fig:wave_profile}
\end{figure}

\subsection{Discussions of the result and relationship with previous work}

We first explain why the parameter regime in Theorem~\ref{thm:main} is
natural, and why the corresponding nonlinear problem is not a straightforward
variant of the classical gravity--capillary theory.

If gravity is temporarily restored and $c=d=1$, then infinitesimal periodic
waves with wave number $k$ satisfy the dispersion relation
\begin{equation}\label{eq:intro-dispersion}
k\coth k=(g+\omega)+\sigma k^2 .
\end{equation}
Thus the pure-capillary rotational choice
\begin{equation}
g=0,\qquad \omega=1+\varepsilon
\end{equation}
has the same long-wave linear balance as the irrotational gravity--capillary
choice
\begin{equation}
g=1+\varepsilon,\qquad \omega=0 .
\end{equation}
For $\sigma>1/3$, the value $g+\omega=1$ gives a double root at $k=0$, and the
perturbation $g+\omega=1+\varepsilon$ produces the long-wave KdV scaling.

This formal coincidence is geometrically misleading. In the irrotational
gravity--capillary problem, the far-field velocity is zero in the laboratory
frame, and hence the relative horizontal velocity in the moving frame is
\begin{equation}
c-u_{1,\infty}=1 .
\end{equation}
In the pure-capillary rotational problem, the same coefficient in
\eqref{eq:intro-dispersion} is produced by the shear. In the moving frame, the
far-field velocity is
\begin{equation}
u_{1,\infty}(y)-c=-(\omega y+1),
\qquad -1\le y\le0,
\end{equation}
and therefore
\begin{equation}\label{eq:far_field_shear_velocity}
c-u_{1,\infty}(y)=1+\omega y .
\end{equation}
At the critical value $\omega=1$, this relative velocity vanishes at the bed.
For the branch in Theorem~\ref{thm:main}, where $\omega=1+\varepsilon$, the
zero occurs at
\begin{equation}
y_\varepsilon=-\frac{1}{1+\varepsilon}\in(-1,0).
\end{equation}
Thus the asymptotic state already contains a critical level. The waves
constructed in this paper should therefore be viewed as bifurcating from a
critical shear flow, not from a no-stagnation laminar flow.

This is the central geometric obstruction. The standard Dubreil--Jacotin
height formulation used in the no-stagnation theory of rotational water waves
requires the relative horizontal velocity $c-u_1$ to have a fixed sign. If
$\Psi$ is the relative stream function,
\begin{equation}
\Psi_y=u_1-c,\qquad \Psi_x=-u_2,
\end{equation}
and $s=-\Psi(x,y)$, then
\begin{equation}
\partial_y s=c-u_1 .
\end{equation}
Hence the use of $s$ as a vertical coordinate is legitimate only under a
no-stagnation condition. Equivalently, for the inverse height function
$y=h(x,s)$, one has
\begin{equation}
h_s=\frac{1}{c-u_1}.
\end{equation}
In the regime considered here, \eqref{eq:far_field_shear_velocity} shows that
this denominator vanishes already in the far-field shear. Consequently, the
desired branch lies outside the height-function phase space underlying the
spatial-dynamics construction of Groves--Wahl\'en \cite{groves2007spatial}
and related no-stagnation approaches.

We therefore formulate the traveling-wave problem directly in flattened Euler
variables. This is not only a technical preference but a necessity imposed by
the critical-level geometry. In this formulation one must keep the elliptic
bulk variable, the bottom boundary, the shear, and the capillary boundary
condition inside a single infinite-dimensional Hamiltonian system. The
Hamiltonian functional, the weak symplectic form, the reversible structure,
the linearized operator, and the resolvent estimates must all be derived in
this Eulerian phase space.

A further difficulty is that the natural Hamiltonian vector field has a
nonlinear boundary condition in its domain. Before applying center-manifold
theory, we perform a nonlinear change of variables which fixes the domain of
the vector field while preserving the reversible Hamiltonian structure. After
this preparation, the linearized operator at the critical shear has a
two-dimensional center subspace and a hyperbolic complement satisfying the
required resolvent bounds. The dynamics near the critical shear is therefore
reduced to a two-dimensional Hamiltonian center manifold.

The reduced symplectic form is still parameter-dependent and not initially in
canonical form. We use a parameter-dependent Darboux transformation to obtain
canonical coordinates on the center manifold. We then compute the reduced
Hamiltonian to cubic order. Under the long-wave scaling dictated by the
double root of \eqref{eq:intro-dispersion}, the reduced Hamiltonian system
converges to the stationary KdV equation
\begin{equation}
Q''=Q+\frac12 Q^2 .
\end{equation}
Its reversible homoclinic orbit persists for the full reduced dynamics. The
final step is not merely a projection argument: the homoclinic orbit on the
center manifold is lifted through the fixed-domain transformation and then
through the Eulerian reconstruction theorem, producing a solution of the
original free-boundary Euler equations. This yields the solitary waves in
Theorem~\ref{thm:main}.

Let us also compare the present finite-depth problem with the infinite-depth
pure-capillary construction of Rowan--Wan \cite{rowan2024two}. In infinite
depth, they use conformal mapping and a boundary integral formulation to
derive a Babenko-type equation for the free surface, and the bifurcation near
the critical velocity is governed by a stationary focusing cubic nonlinear
Schr\"odinger equation. The finite-depth problem considered here has a
different low-frequency structure. In the infinite-depth Babenko formulation,
the critical linear symbol has the form
\begin{equation}\label{eq:infinite_depth_symbol}
\ell_*(\xi)
=
c_*\omega-c_*^2|\xi|+\sigma|\xi|^2 .
\end{equation}
In finite depth, the corresponding low-frequency symbol is instead
\begin{equation}\label{eq:finite_depth_symbol}
\widetilde\ell_*(\xi)
=
c_*\omega-c_*^2|\xi|\coth(|\xi|d)+\sigma|\xi|^2 .
\end{equation}
The bottom boundary therefore changes the small-frequency behavior through
the factor $\coth(|\xi|d)$. In particular, the inverse estimates and the
normal-form coefficients are different from their infinite-depth analogues.
Together with the critical-level obstruction described above, this makes the
finite-depth problem a genuinely different Hamiltonian spatial-dynamics
problem rather than a direct adaptation of the infinite-depth scalar
reduction.

The main contribution of the paper is thus twofold. First, we prove the first
finite-depth existence result for pure-capillary solitary waves with nonzero
constant vorticity, showing that the irrotational pure-capillary
nonexistence mechanism is not stable under the introduction of critical
shear. Second, we develop a Hamiltonian center-manifold construction for a
finite-depth rotational water-wave problem outside the standard
Dubreil--Jacotin no-stagnation framework. This identifies the geometric
mechanism by which constant vorticity creates solitary waves in a regime
where pure capillarity alone is rigid.

\subsection{Literature review}

Traveling water waves are among the central special solutions of the
free-boundary Euler equations. The classical theory begins with Russell's
observation of solitary waves~\cite{russell1844report} and the asymptotic
theories of Boussinesq and Korteweg--de Vries. Rigorous existence theory for
two-dimensional solitary waves was first developed in the irrotational
finite-depth gravity setting by Friedrichs--Hyers~\cite{friedrichs1954existence},
Beale~\cite{thomas1977existence}, and Amick--Toland~\cite{amick1981periodic}.
For finite-depth gravity--capillary waves, the spatial-dynamics and
center-manifold approach was initiated by Kirchg\"assner~\cite{kirchgassner1988nonlinearly}
and Amick--Kirchg\"assner~\cite{amick1989theory}, and was further developed in
many works, including
\cite{buffoni1996plethora,iooss1992water,buffoni1999multiplicity,
constantin2011nonlinear,groves2004steady,groves2002dimension,
groves2008fully,puaruau2005nonlinear,haragus2002finite,deng2009three}.
Infinite-depth solitary waves in the gravity--capillary regime were
constructed in
\cite{buffoni2004existence,buffoni2004existence2,groves2011existence,
iooss1996capillary}.

The limiting pure-gravity and pure-capillary regimes exhibit stronger
rigidity. In the irrotational setting, Hur~\cite{hur2012no} proved
nonexistence of two-dimensional pure-gravity solitary waves in deep water
under a decay assumption, while Ifrim--Tataru~\cite{ifrim2020no} proved
nonexistence in deep water for both the pure-gravity and pure-capillary
problems. Most directly related to the present work,
Ifrim--Pineau--Tataru--Taylor~\cite{ifrim2022no} proved that
two-dimensional finite-depth irrotational pure-capillary solitary waves do
not exist. Theorem~\ref{thm:main} gives the complementary finite-depth
rotational existence result in the constant-vorticity setting, and shows that
the obstruction in the irrotational pure-capillary theory can be overcome by
critical shear.

Rotational water waves with surface tension have also been extensively
studied in the periodic setting. Wahl\'en~\cite{wahlen2006steady} constructed
steady periodic capillary--gravity waves with vorticity. For constant
vorticity, Constantin--Varvaruca~\cite{constantin2011steady} studied
regularity and local bifurcation for periodic gravity waves, while Martin
proved regularity and local bifurcation results for periodic capillary waves
\cite{martin2012regularity,martin2013local} and for the corresponding
capillary--gravity problem~\cite{martin2013capillarygravity}. Martin--Matioc
constructed Wilton ripples with constant vorticity and capillary effects
\cite{martinMatioc2013wilton}. Related Stokes expansions, numerical studies,
and recent variational bifurcation results can be found in
\cite{hsu2016gravity,barbieri2025bifurcation}. These works provide a broad
periodic-wave background for the interaction between vorticity and surface
tension.

For solitary waves with vorticity, Groves--Wahl\'en~\cite{groves2007spatial}
constructed small-amplitude gravity--capillary solitary waves with arbitrary
vorticity distribution by spatial-dynamics methods, and later proved
existence and conditional energetic stability for constant-vorticity
gravity--capillary solitary waves by variational methods
\cite{grovesWahlen2015existence}. Critical layers and stagnation phenomena
are another important feature of rotational water waves; see, for example,
\cite{constantin2011steady,kozlov2020solitary}. In the pure-capillary
rotational direction, Hur--Wheeler~\cite{hur2020exact} constructed exact
free-surface solutions with constant vorticity, Rowan--Wan~\cite{rowan2024two}
constructed infinite-depth pure-capillary solitary waves with nonzero constant
vorticity, and Kharif--Abid--Chen--Hsu~\cite{kharif2025nonlinear} derived an
NLS approximation for pure capillary waves on vertically sheared currents.
The present paper fills the finite-depth constant-vorticity gap in this
pure-capillary solitary-wave theory by constructing localized waves through a
Hamiltonian center-manifold analysis at a critical shear flow.

\subsection{Organization of the paper}

The rest of the paper is organized as follows.
\begin{enumerate}
\item In Section~\ref{sec:formulation}, we derive the traveling-wave
formulation and construct the Hamiltonian spatial-dynamics system in
flattened Euler variables. We also prove that solutions of this Eulerian
Hamiltonian system reconstruct solutions of the original free-boundary
Euler equations.

\item In Section~\ref{sec:center}, we remove the nonlinear boundary
condition from the domain of the Hamiltonian vector field by a nonlinear
transformation. We then formulate the resulting reversible Hamiltonian
system on a fixed phase space and state the Hamiltonian center-manifold and
Darboux reduction used later.

\item In Section~\ref{sec:normalform}, we verify the spectral and
resolvent hypotheses, identify the two-dimensional center space, compute
the cubic Hamiltonian coefficients, and derive the KdV homoclinic profile.
Finally, we lift the homoclinic orbit back to the physical variables and
prove the asymptotic formula in Theorem~\ref{thm:main}. Section~\ref{sec:App1}
collects auxiliary coefficient computations and estimates used in the
normal-form expansion.
\end{enumerate}

\subsection*{Acknowledgements}
T.-Y. Hsiao is supported by the European Union ERC Consolidator Grant 2023
GUnDHam, Project Number 101124921. He would like to express his sincere
gratitude to Vera Hur, Zhao Yang, and Alberto Maspero for their invaluable
guidance and continuous encouragement. He also thanks Chongchun Zeng and
Erik Wahl\'en for their insightful advice and generous help during the
completion of this project.

Z. Liang was supported in part by NSF grant DMS-2153992 through his advisor
during the Winter 2025 semester, when part of this work was completed. He
would like to thank his doctoral advisor Sijue Wu for her continuous support, and Zach Deiman, Noah Stevenson, and Yuchuan Yang for helpful discussions.

G. To was supported by the Swedish Research Council, grant no. 2020-00440.

Y. Zhang is supported by the European Union ERC Starting Grant 2020 GeoSub,
Project Number 945655.

\section{Notation and conventions}

Throughout the paper, manifolds are understood to be \(C^k\) Hilbert
manifolds, for the relevant value of \(k\). This convention includes
finite-dimensional \(C^k\) manifolds as a special case. We refer to
\cite{Abraham1988} for background. Points of a manifold \(M\) are denoted by
\(m\in M\). When \(M\) is an open subset of a Hilbert space \(H\), we identify
\(T_mM\) with \(H\). For a vector field or differential form \(V\) on \(M\), we
write \(V(m)\) or \(V_m\) for its value at \(m\).

Many maps depend on auxiliary parameters. We write such maps as \(f(x,\lambda)\),
where \(x\) is the manifold variable and \(\lambda\) is a parameter. For fixed
\(\lambda\), we also write
\[
        f^\lambda(x):=f(x,\lambda).
\]
Unless otherwise stated, operations such as composition, inversion, and
differentiation are performed with respect to the manifold variable, with the
parameters held fixed.

If \(L:H_1\times\cdots\times H_n\to H'\) is an \(n\)-linear map, we write
\[
        L[v_1,\ldots,v_n]
\]
for its value on \((v_1,\ldots,v_n)\). For a map \(f:M\to H'\), its
Fr\'echet differential at \(m\) is denoted by
\[
        df(m):T_mM\to H',
\]
and we write \(df(m)[v]\), \(df_m[v]\), or simply \(df(m)v\) for its action on
\(v\in T_mM\). Higher Fr\'echet derivatives are denoted by \(d^n f(m)\), viewed
as \(n\)-linear maps:
\[
        d^n f(m)[v_1,\ldots,v_n].
\]

If \(f\) is defined on a product manifold \(M_1\times\cdots\times M_n\), then
\(d_i f\) denotes the differential with respect to the \(i\)-th component. In
Euclidean coordinates, we use the standard notation
\[
        f_x,\quad f_y,\quad f_{xx},\quad f_{xy},\quad f_{yy},\quad \ldots
\]
for partial derivatives. When the variable is one-dimensional, for instance
\(y\in(a,b)\), we may also write \(\dot f\) or \(f_y\) for \(df/dy\). The same
notation is used for weak derivatives whenever the meaning is clear from
context.

\section{Hamiltonian formulation in flattened Euler variables}\label{sec:formulation}
The goal of this section is to derive a Hamiltonian dynamical system equivalent to the solitary wave problem corresponding to \eqref{eq:water-wave-system}. Because the critical shear lies outside the no-stagnation height-function framework, the formulation is carried out directly in flattened Euler variables. 
\subsection{Zakharov--Craig--Sulem formulation}
The water-wave system \eqref{eq:water-wave-system} is often studied in the
Zakharov--Craig--Sulem framework, which we briefly recall below. For a more detailed derivation, see, for example, \cite{Wahlen2007HamiltonianConstantVorticity}.  

We assume that our free boundary is a graph $\partial\mathcal D(t)=\{(x,y): y=\eta(t,x)\}$. Since the vorticity $\mathrm{curl}\,\mathbf u = \omega$ is a constant, the velocity field $\mathbf u$ can be written as a shear flow plus an irrotational perturbation as
\begin{equation}\label{velocity}
\mathbf u = (-\omega y,0)+\nabla\phi.
\end{equation}
Then, in terms of the velocity potential $\phi$, its harmonic conjugate $\psi$, and the free boundary function $\eta$, \eqref{eq:euler-freebdry}--\eqref{Young-Laplace} can be recast as
\begin{equation}\label{main second}
\begin{aligned}
&\phi_{xx}+\phi_{yy}=0 && \text{in } -d<y<\eta(t,x),\\
&\phi_y=0 && \text{on } y=-d,\\
&\eta_t=\phi_y-(\phi_x-\omega \eta)\eta_x && \text{on } y=\eta(t,x),\\
&\phi_t+\frac12(\phi_x^2+\phi_y^2)-\omega\eta\,\phi_x+\omega\psi
-\frac{\sigma\,\eta_{xx}}{(1+\eta_x^2)^{3/2}}=0 && \text{on } y=\eta(t,x).
\end{aligned}
\end{equation}
Since we study the traveling wave problem, we use the moving-frame variable $\xi:= x - ct$ to obtain
\begin{equation}\label{main third}
\begin{aligned}
&\phi_{\xi\xi}+\phi_{yy}=0 && \text{in } -d<y<\eta(\xi),\\
&\phi_y=0 && \text{on } y=-d,\\
&-c\,\eta_{\xi}=\phi_y-(\phi_{\xi}-\omega \eta)\eta_{\xi} && \text{on } y=\eta(\xi),\\
&-c\,\phi_{\xi}+\frac12(\phi_{\xi}^2+\phi_y^2)-\omega\eta\,\phi_{\xi}+\omega\psi
-\frac{\sigma\,\eta_{\xi\xi}}{(1+\eta_{\xi}^2)^{3/2}}=0 && \text{on } y=\eta(\xi).
\end{aligned}
\end{equation}
Note that the third equation in \eqref{main third} implies
\begin{equation}\label{psi-eta=0 1}
\frac{d}{d\xi}\Bigl(\psi(\xi,\eta(\xi))-\frac{\omega}{2}\eta(\xi)^2-c\eta(\xi)\Bigr)=0.
\end{equation}
and thus
\begin{equation}\label{psi-eta=0 2}
\psi(\xi,\eta(\xi))=\frac{\omega}{2}\eta(\xi)^2+\eta(\xi)+\mathrm{const}.
\end{equation}
For solitary waves we require $\eta(\xi)\to 0$ as $|\xi|\to\infty$, so the constant must be zero. Furthermore, under the rescalings
\[
\xi\mapsto \frac{\xi}{d},\quad y\mapsto \frac{y}{d},\quad \omega\mapsto \frac{\omega d}{c},\quad
\sigma\mapsto \frac{\sigma}{c^2 d},\quad\phi\mapsto \frac{\phi}{cd},\quad \psi\mapsto \frac{\psi}{cd},\quad \eta\mapsto \frac{\eta}{d},
\]
we may normalize $c=d=1$ in \eqref{main third}. Substituting \eqref{psi-eta=0 2} into \eqref{main third} yields the elliptic system
\begin{equation}\label{main fourth}
\begin{aligned}
&\phi_{\xi\xi}+\phi_{yy}=0 && \text{in } -1<y<\eta(\xi),\\
&\phi_y=0 && \text{on } y=-1,\\
&-\eta_{\xi}=\phi_y-(\phi_{\xi}-\omega \eta)\eta_{\xi} && \text{on } y=\eta(\xi),\\
&-\phi_{\xi}+\frac12\phi_{\xi}^2+\frac12\phi_y^2-\omega\eta\,\phi_{\xi}
+\omega\eta+\frac{\omega^2}{2}\eta^2
-\frac{\sigma\,\eta_{\xi\xi}}{(1+\eta_{\xi}^2)^{3/2}}=0 && \text{on } y=\eta(\xi).
\end{aligned}
\end{equation}

For notational simplicity, in the following we still write $x$ in place of $\xi$.

\subsection{The Eulerian Hamiltonian phase space}
Our next step is to formulate \eqref{main fourth} as an infinite-dimensional quasi-linear Hamiltonian system. For $s = 1, 2$, we define the Hilbert space 
\begin{align} \label{X^s}
X^s=H^s(0,1)\times H^s(0,1)\times\mathbb{R}\times\mathbb{R}, 
\end{align}
whose norm is given by
\begin{align}\label{normXs}
\|(\varphi,\theta,z,\eta)\|_{X^s} := \|\varphi\|_{H^s(0,1)} + \|\theta\|_{H^s(0,1)} + |z| + |\eta|.
\end{align}

\begin{remark}
For $\theta\in H^1(0,1)$, we define the traces by
\begin{equation}\label{eq:trace_identities}
\theta(0) := - \int_0^1 (1 - y) \theta_y \; dy + \int_0^1 \theta \; dy,\quad 
\theta(1) := \int_0^1 y \theta_y \; dy + \int_0^1 \theta \; dy.
\end{equation}
These functionals are well-defined, and the Cauchy--Schwarz inequality gives
\begin{align}\label{traceb}
    |\theta(0)| + |\theta(1)| \le C \|\theta\|_{H^1(0,1)}.
\end{align}
Furthermore, we have the integration by parts formula
\begin{align}\label{IBPH1}
    \varphi(1)\theta(1) - \varphi(0)\theta(0)
    = \int_0^1 \varphi_y \theta + \varphi \theta_y \; dy,
\end{align}
for $H^1(0,1)$ functions. The formula (\ref{IBPH1}) holds first for $C^1$ functions, and we observe that both sides of it are bounded with respect to $H^1(0,1)$ norm, so a standard limiting argument passes it to $H^1(0,1)$ functions. 
\end{remark}

We then consider the Hilbert space  
\begin{align} \label{M_0}
    M_0=\left\{ (\varphi,\theta,z,\eta)\in X^1: \int_0^1 \varphi \;dy=0, \theta(0)=\theta(1)=0 \right\}
\end{align}
and the open manifold $M^{\omega, \sigma}:= \left\{ m\in M_0: |\tilde{z}|<\sigma,~ \eta > -1 \right\}$ where 
\begin{align} \label{z bar}
    \tilde{z}:=z +\int_{0}^1 \dfrac{y{\varphi}_{y}\left({\theta}_{y}+\omega (1-y)-1\right)}{\eta+1}\; dy.
\end{align} 
Note that $M^{\omega, \sigma}$ is an open subset of $M_0$. To define the Hamiltonian system as an ODE on the phase space $M^{\omega, \sigma}$, we first recall the following definition of a symplectic form.
\begin{definition}[Symplectic form {\cite[Supplement~6.4A]{Abraham1988}}]
Let \(M\) be a manifold. A smooth \(2\)-form \(\Omega\) on \(M\) is called a
\emph{symplectic form} if the following two conditions hold:
\begin{enumerate}[label=(\roman*)]
    \item \(\Omega\) is closed, namely $d\Omega=0$.
    \item \(\Omega\) is weakly nondegenerate at every point \(m\in M\): if \(v\in T_mM\) satisfies $\Omega_m[v,w]=0$ for all $w\in T_mM$, then \(v=0\).
\end{enumerate}
\end{definition}
 
Consider the symplectic $2$-form on $M^{\omega, \sigma}$ defined by
\begin{equation}\label{symplectic form}
  \Omega_m^{\omega, \sigma}[v^1,v^2]:=z^2\eta^1-\eta^2 z^1+ \int_{0}^1 \left(\theta^2_y\varphi^1-\varphi^2\theta^1_y\right)\;dy,
\end{equation}
where $v^1=(\varphi^1,\theta^1,z^1,\eta^1)$ and $v^2=(\varphi^2,\theta^2,z^2,\eta^2)$ are on $M_0\cong T_m M^{\omega, \sigma}$. $\Omega^{\omega, \sigma}$ is closed because it does not depend on $m$ and the weak nondegeneracy is straightforward to check. We then define a Hamiltonian functional $H^{\omega, \sigma}$ such that 
\begin{equation}\label{second Hamiltonian}
\begin{aligned}
      &H^{\omega, \sigma}(m)=\int_{0}^1  \dfrac{1}{2(\eta+1)}\left(\left({\theta}_{y} +\omega (1-y)+\eta\right)^2-\varphi^2_{y}\right) dy\\
    &\qquad+\int_{0}^1 \omega (y(\eta+1)-1)\left({\theta}_y -1 + \omega (1-y)\right)\;dy-\sqrt{\sigma^2-\tilde{z}^2} - \frac{1}{2} \omega + \frac{1}{6} \omega^2 + \sigma.
\end{aligned}
\end{equation}
Note that $H^{\omega, \sigma}$ is smooth. The triple $(M^{\omega, \sigma},\Omega^{\omega, \sigma},H^{\omega, \sigma})$ forms a Hamiltonian system. To be more precise, given $m\in M^{\omega, \sigma} \cap X^2$ with 
\begin{align} \label{BC first}
    \varphi_y(0)=0,\,\,\,\mbox{and}\,\,\,\varphi_y(1)-\frac{\tilde{z}(\theta_y(1)-1)}{\sqrt{\sigma^2-\tilde{z}^2}}=0,
\end{align}
we can determine a unique Hamiltonian vector field $v_H^{\omega, \sigma}$ such that for every tangent vector $v'_m\in M_0 \cong T_m M^{\omega, \sigma}$, 
\begin{align} \label{def of u_H}
     d H_m^{\omega, \sigma}[v'_m]=\Omega_m^{\omega, \sigma}[(v_H^{\omega, \sigma})_m,v'_m].
\end{align}
A straightforward calculation reveals that if we write $v_H^{\omega, \sigma} = (\varphi_H^{\omega, \sigma}, \theta_H^{\omega, \sigma},z_H^{\omega, \sigma}, \eta_H^{\omega, \sigma})$, then
\begin{equation}\label{eq:final_Hamiltonian_eqns}
\begin{split}
{\varphi}_{H}^{\omega, \sigma}(m)
&=
\dfrac{1}{\eta+1}
\left(
{\theta}_y
+\dfrac{\tilde{z}(y{\varphi}_y-\varphi(1))}
{\sqrt{\sigma^2-\tilde{z}^2}}
\right)
+\omega(\eta+1)\left(y-\dfrac{1}{2}\right)
-\dfrac{\omega}{\eta+1}\left(y-\dfrac{1}{2}\right),
\\
{\theta}_{H}^{\omega, \sigma}(m)
&=
\dfrac{1}{\eta+1}
\left(
\dfrac{\tilde{z}y({\theta}_y+\omega(1-y)-1)}
{\sqrt{\sigma^2-\tilde{z}^2}}
-{\varphi}_y
\right),
\\
z_{H}^{\omega, \sigma}(m)
&=
\int_0^1
\left(
\dfrac{1}{2(\eta+1)^2}
\Big(({\theta}_y+\omega(1-y)-1)^2-\varphi_y^2\Big)
-\dfrac{1}{2}
\right)\,dy
\\
&\quad
+\dfrac{\tilde{z}}{\sqrt{\sigma^2-\tilde{z}^2}}
\int_0^1
\dfrac{y{\varphi}_y({\theta}_y+\omega(1-y)-1)}
{(\eta+1)^2}\,dy
-\int_0^1 \omega y({\theta}_y+\omega(1-y)-1)\,dy,
\\
\eta_{H}^{\omega, \sigma}(m)
&=
\dfrac{\tilde{z}}{\sqrt{\sigma^2-\tilde{z}^2}}.
\end{split}
\end{equation}
Henceforth we use $N_0$ to denote the space $M_0 \cap X^2$ equipped with the norm $\|\cdot\|_{X^2}$, and we use $N^{\omega, \sigma}$ to denote the open manifold $M^{\omega, \sigma} \cap X^2$ of $N_0$ equipped with $\|\cdot\|_{X^2}$. From \eqref{eq:final_Hamiltonian_eqns} we see that $v_H^{\omega, \sigma}$ can be extended to a smooth function from $N^{\omega, \sigma}$ to $X^1$. We denote
\begin{align*}
    \mathrm{dom}(v_H^{\omega, \sigma})=\{(\varphi,\theta,z,\eta)\in N^{\omega, \sigma}: \eqref{BC first}\,\,\mbox{holds}\}.
\end{align*}
Note that if  $m = (\varphi,\theta,z,\eta) \in  \mathrm{dom}(v_H^{\omega, \sigma})$, then $v_H^{\omega, \sigma}(m) \in M_0$. The Hamiltonian system is then given by
\begin{align} \label{old Hamiltons eq}
    \dot{\gamma}(x)=v_H^{\omega, \sigma}(\gamma(x)),
\end{align}
which is, after writing $\gamma(x)=(\varphi(x,\cdot),\theta(x,\cdot),z(x),\eta(x))$,
\begin{equation}\label{Old Hamilton's equations 2}
\begin{aligned}
\dot{\varphi}&=\dfrac{1}{\eta+1}\left({\theta}_y+\dfrac{\tilde{z}(y{\varphi}_y-\varphi(1))}{\sqrt{\sigma^2-\tilde{z}^2}}\right)+\omega(\eta+1)\left(y-\dfrac{1}{2}\right)-\dfrac{\omega}{\eta+1}\left(y-\dfrac{1}{2}\right),\\  
\dot{\theta}&=\dfrac{1}{\eta+1}\left( \dfrac{\tilde{z}y({\theta}_y +\omega(1-y)-1)}{\sqrt{\sigma^2- \tilde{z}^2}}-{\varphi}_y \right),\\
\dot{z}&= \int_0^1 \left(\dfrac{1}{2(\eta+1)^2}\left(({\theta}_y +\omega(1-y)-1)^2-\varphi^2_y\right)-\dfrac{1}{2}\right) dy\\ 
&\,\,+\dfrac{\tilde{z}}{\sqrt{\sigma^2-\tilde{z}^2}}\int_0^1 \dfrac{y{\varphi}_y({\theta}_y +\omega(1-y)-1)}{(\eta+1)^2}dy -\int_0^1 \omega y({\theta}_y +\omega (1-y)-1) dy,
\\ 
\dot{\eta}&= \dfrac{\tilde{z}}{\sqrt{\sigma^2- \tilde{z}^2}}.
\end{aligned}
\end{equation}
We end this subsection with some symmetric properties of $H^{\omega, \sigma}$, $\Omega^{\omega, \sigma}$, and $v_H^{\omega, \sigma}$. 

\begin{proposition}[Reversibility]\label{prop:reversibility}
Let \(S:M_0\to M_0\) be the linear isometry defined by
\[
        S(\varphi,\theta,z,\eta)=(-\varphi,\theta,-z,\eta).
\]
Then \(S^2=\mathrm{id}\), \(S(M^{\omega,\sigma})=M^{\omega,\sigma}\), and
\(S\) is an anti-symplectic symmetry of the Hamiltonian system. More precisely,
for every \(m\in M^{\omega,\sigma}\) and \(v^1,v^2\in T_mM^{\omega,\sigma}\cong M_0\),
\begin{align}
        H^{\omega,\sigma}(Sm)
        &=
        H^{\omega,\sigma}(m),
        \label{eq:symH}\\
        \Omega_{Sm}^{\omega,\sigma}[Sv^1,Sv^2]
        &=
        -\Omega_m^{\omega,\sigma}[v^1,v^2].
        \label{eq:symOmega}
\end{align}
Consequently, the Hamiltonian vector field \(v_H^{\omega,\sigma}\) is reversible
with respect to \(S\):
\begin{equation}\label{eq:symvH}
        v_H^{\omega,\sigma}(Sm)
        =
        -S v_H^{\omega,\sigma}(m),
        \quad m\in M^{\omega,\sigma}.
\end{equation}
\end{proposition}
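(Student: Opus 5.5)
We verify the listed properties directly from the definitions, then derive reversibility of the vector field from the invariance of $H^{\omega,\sigma}$ and the anti-symplecticity of $S$.

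\textbf{Step 1: $S$ on $M_0$ and $M^{\omega,\sigma}$.} Clearly $S$ is linear with $S^2=\mathrm{id}$, and it preserves the $X^1$ norm \eqref{normXs}. It preserves $M_0$ because the constraints $\int_0^1\varphi\,dy=0$ and $\theta(0)=\theta(1)=0$ are linear and involve only one component each. For $M^{\omega,\sigma}$, the key observation is that $\tilde z$ in \eqref{z bar} is odd under $S$. Indeed, $z\mapsto -z$ and $\varphi_y\mapsto-\varphi_y$, while $\theta$ and $\eta$ are unchanged, so $\tilde z(Sm)=-\tilde z(m)$. Hence $|\tilde z|<\sigma$ and $\eta>-1$ are preserved, and $S(M^{\omega,\sigma})=M^{\omega,\sigma}$.

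\textbf{Step 2: invariance of $H$ and anti-symplecticity of $\Omega$.} In \eqref{second Hamiltonian}, $\varphi$ enters only through $\varphi_y^2$, and $z$ enters only through $\sqrt{\sigma^2-\tilde z^2}$. Both are even under $S$, and the remaining terms depend only on $(\theta,\eta)$. This gives \eqref{eq:symH}. For \eqref{eq:symOmega}, note that $\Omega$ is constant in $m$ and each term of \eqref{symplectic form} is a product of one odd component ($z$ or $\varphi$) and one even component ($\eta$ or $\theta_y$). Each term therefore changes sign under $v^i\mapsto Sv^i$.

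\textbf{Step 3: reversibility of $v_H$.} Differentiating $H\circ S=H$ gives $dH_{Sm}[Sv']=dH_m[v']$ for all $v'$. Take $m\in\mathrm{dom}(v_H^{\omega,\sigma})$; then $Sm$ lies in the domain too, because \eqref{BC first} is odd in $(\varphi,\tilde z)$ and each condition is mapped to its negative. By \eqref{def of u_H} and Step 2,
\[
\Omega_{Sm}[v_H(Sm),Sv']=dH_m[v']=\Omega_m[v_H(m),v']=-\Omega_{Sm}[Sv_H(m),Sv'] .
\]
Since $S$ is onto $M_0$, weak nondegeneracy yields $v_H(Sm)=-Sv_H(m)$. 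The only subtlety is that $v_H$ is defined via \eqref{def of u_H} only on its domain. As a cross-check, and to cover the smooth extension to $N^{\omega,\sigma}$, we can also read the identity off \eqref{eq:final_Hamiltonian_eqns}. Under $S$, the quantities $\tilde z$, $\varphi$, $\varphi(1)$ and $\varphi_y$ flip sign, so the combinations $\tilde z\,\varphi_y$ and $\tilde z\,\varphi(1)$ are even. Consequently $\varphi_H$ and $z_H$ are even and $\theta_H$ and $\eta_H$ are odd, which is exactly $v_H(Sm)=-Sv_H(m)$.

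There is no real obstacle in this argument. The only point needing care is the oddness of $\tilde z$, which underlies Steps 1–3.
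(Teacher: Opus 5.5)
Your proof is correct and follows the paper's argument. Oddness of $\tilde z$ gives the invariance of $M^{\omega,\sigma}$ and of $H^{\omega,\sigma}$ (strictly, $\varphi$ also enters $H^{\omega,\sigma}$ through $\tilde z$, which is harmless since only $\tilde z^{2}$ appears), each term of $\Omega^{\omega,\sigma}$ pairs an odd component with an even one, and reversibility of $v_H^{\omega,\sigma}$ follows from weak nondegeneracy exactly as in the paper. Your extra parity check on the explicit formulas \eqref{eq:final_Hamiltonian_eqns} is a useful addition: it also covers points of $N^{\omega,\sigma}$ outside $\operatorname{dom}(v_H^{\omega,\sigma})$, where the defining identity \eqref{def of u_H} is unavailable and the paper's proof says nothing.
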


\begin{proof}
The identities \(S^2=\mathrm{id}\) and
\(S(M^{\omega,\sigma})=M^{\omega,\sigma}\) follow directly from the definitions:
under \(S\), both \(\varphi\) and \(z\) change sign, while \(\theta\) and \(\eta\)
are unchanged; in particular, the quantity \(\tilde z\) changes sign, so that
the conditions \(|\tilde z|<\sigma\) and \(\eta>-1\) are preserved.

The identities \eqref{eq:symH} and \eqref{eq:symOmega} are obtained by direct
substitution in the definitions of \(H^{\omega,\sigma}\) and
\(\Omega^{\omega,\sigma}\).

It remains to prove \eqref{eq:symvH}. Recall that the Hamiltonian vector field satisfies
\begin{equation}
\Omega_m^{\omega,\sigma}
        [v_H^{\omega,\sigma}(m),v]
        =
        dH^{\omega,\sigma}(m)[v],
        \quad v\in T_mM^{\omega,\sigma}.
\end{equation}
Using \(H^{\omega,\sigma}\circ S=H^{\omega,\sigma}\), we have
\begin{equation}
dH^{\omega,\sigma}(Sm)[Sv]
        =
        dH^{\omega,\sigma}(m)[v].
\end{equation}
On the other hand, by \eqref{eq:symOmega},
\begin{equation}
\Omega_{Sm}^{\omega,\sigma}
        [-Sv_H^{\omega,\sigma}(m),Sv]
        =
        \Omega_m^{\omega,\sigma}
        [v_H^{\omega,\sigma}(m),v]
        =
        dH^{\omega,\sigma}(m)[v].
\end{equation}
Therefore
\begin{equation}
\Omega_{Sm}^{\omega,\sigma}
        [-Sv_H^{\omega,\sigma}(m),Sv]
        =
        dH^{\omega,\sigma}(Sm)[Sv]
\end{equation}
for every \(v\in T_mM^{\omega,\sigma}\). Since \(S\) maps
\(T_mM^{\omega,\sigma}\) onto \(T_{Sm}M^{\omega,\sigma}\), weak
nondegeneracy of \(\Omega^{\omega,\sigma}\) gives
\begin{equation}
v_H^{\omega,\sigma}(Sm)
        =
        -Sv_H^{\omega,\sigma}(m)
\end{equation}
as desired.
\end{proof}

\subsection{Reconstruction of the free-boundary Euler problem}
We now show that the flattened Eulerian Hamiltonian system is equivalent to the traveling-wave free-boundary problem. This reconstruction is the step which returns the spatial-dynamics solution to the original Euler variables.
\begin{theorem}[Hamiltonian reduction]\label{thm:Hamiltonian_equivalence}
Suppose that \(\gamma\in C^1((a,b),N^{\omega,\sigma})\) satisfies
\eqref{old Hamiltons eq} and
\(\gamma((a,b))\subset\operatorname{dom}(v_H^{\omega,\sigma})
\subset M^{\omega,\sigma}\). Write $\gamma(x) = (\varphi(x,\cdot),\theta(x,\cdot),z(x),\eta(x))$. Let \(a<x_1<x_2<b\), and set $\Theta := \{(x,y)\in\mathbb R^2: x_1<x<x_2,\ -1<y<\eta(x)\}.$
For \((x,y)\in\Theta\), define
\begin{equation}\label{eq:flattening-ybar}
        \bar y
        :=
        \frac{y+1}{\eta(x)+1},
        \qquad
        y=(\eta(x)+1)\bar y-1.
\end{equation}
Using the representatives from Lemma~\ref{lemalmost}, define
\(\phi,\zeta\in H^2(\Theta)\) by
\begin{equation}\label{eq:reconstruction-phi}
\begin{aligned}
\phi(x,y)
&=
\varphi(x,\bar y)
+\frac{2-\omega}{2}(x-x_1)
+\int_{x_1}^x
\left(
\frac{\eta_x(s)}{\eta(s)+1}\varphi(s,1)
+\frac{\omega}{2}\eta(s)
+\frac{\omega-2}{2(\eta(s)+1)}
\right)\,ds,
\end{aligned}
\end{equation}
and
\begin{equation}\label{eq:reconstruction-zeta}
        \zeta(x,y)
        =
        \theta(x,\bar y)
        -\frac{\omega}{2}(1-\bar y)^2
        +\bar y\,\eta(x).
\end{equation}
Then \(\phi\) and \(\zeta\) satisfy
\begin{equation}\label{eq:reconstructed-CR}
        \phi_x=\zeta_y+\omega y,
        \qquad
        \phi_y=-\zeta_x
        \qquad
        \text{in }\Theta.
\end{equation}
Moreover, the pair \((\phi,\eta)\) satisfies the system
\eqref{main fourth} on \(x_1<x<x_2\).
\end{theorem}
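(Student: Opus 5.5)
The plan is a direct verification by the chain rule in the flattened coordinates. Write $h:=\eta+1$, so that $\bar y=(y+1)/h$, $\partial_y\bar y=1/h$ and $\partial_x\bar y=-\bar y\eta_x/h$. Every $x$- and $y$-derivative of $\phi$ and $\zeta$ will be expressed through $\varphi_x,\varphi_y,\theta_x,\theta_y$ evaluated at $(x,\bar y)$. The $x$-derivatives $\varphi_x=\dot\varphi$ and $\theta_x=\dot\theta$ are then replaced using the first two equations of \eqref{Old Hamilton's equations 2}. The representatives supplied by Lemma~\ref{lemalmost} are what justify applying the chain rule in $H^2(\Theta)$ and evaluating traces pointwise in $x$. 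I would first prove all identities for smooth data and then pass to the limit, or argue with weak derivatives directly.

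\textbf{Step 1: the Cauchy--Riemann relations \eqref{eq:reconstructed-CR}.}
\begin{itemize}
\item For $\phi_y=-\zeta_x$: compute $\phi_y=\varphi_y/h$ and $\zeta_x=\dot\theta-\bar y\eta_x\big(\theta_y+\omega(1-\bar y)+\eta\big)/h+\bar y\eta_x$. Substitute the $\dot\theta$ equation and use $\eta_x=\tilde z/\sqrt{\sigma^2-\tilde z^2}$. The $\tilde z$-terms should cancel exactly: the $-1$ inside $\theta_y+\omega(1-y)-1$ combines with $\eta$ and $h$ via $(\eta-1)/h+\ldots$. What remains is $\zeta_x=-\varphi_y/h$.
\item For $\phi_x=\zeta_y+\omega y$: compute $\zeta_y=\big(\theta_y+\omega(1-\bar y)+\eta\big)/h$. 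Then expand $\phi_x=\dot\varphi-\bar y\eta_x\varphi_y/h+(2-\omega)/2+\eta_x\varphi(1)/h+\tfrac{\omega}{2}\eta+(\omega-2)/(2h)$.
\item Insert $\dot\varphi$. The $\tilde z(y\varphi_y-\varphi(1))$ term cancels the two $\eta_x$ terms. The remaining terms in $\omega$ and constants are collected into $\omega\big((\eta+1)\bar y-1\big)=\omega y$.
\end{itemize}
This is pure bookkeeping, and the constants in \eqref{eq:reconstruction-phi} were evidently chosen to make it close.

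\textbf{Step 2: the boundary conditions and harmonicity.}
\begin{itemize}
\item \emph{Harmonicity.} Differentiating the Cauchy--Riemann relations, valid in $H^1$ because $\phi,\zeta\in H^2$, gives $\Delta\phi=\zeta_{yx}+\omega-\zeta_{xy}=\ldots$. One must be careful here, since the term $\omega y$ spoils plain harmonicity. The right object is $\phi$ as defined in the ZCS formulation with $\mathbf u=(-\omega y,0)+\nabla\phi$. So I will instead check that $\zeta+\omega y^2/2$ is the harmonic conjugate, whence $\Delta\phi=\partial_x(\zeta_y+\omega y)-\partial_y\zeta_x=\omega-\omega$, provided the mixed-derivative term is handled correctly. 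I will verify the sign convention so that this gives $0$; otherwise it indicates a typo in \eqref{eq:reconstructed-CR}.
\item \emph{Bottom condition.} At $y=-1$ we have $\bar y=0$, so $\phi_y=\varphi_y(x,0)/h=0$ by \eqref{BC first}.
\item \emph{Kinematic condition.} At $\bar y=1$, use $\zeta=\theta(1)+\eta=\eta$ because $\theta(1)=0$. Differentiating along the surface gives $\zeta_x+\zeta_y\eta_x=\eta_x$. Substituting the Cauchy--Riemann relations gives $-\phi_y+(\phi_x-\omega\eta)\eta_x=\eta_x$, which is the third line of \eqref{main fourth}. At $\bar y=0$, $\zeta=-\omega/2$ is constant, consistent with the bottom condition.
\end{itemize}

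\textbf{Step 3: the dynamic (Bernoulli) condition.} This is the main obstacle. It uses the $\dot z$ equation together with the second boundary condition in \eqref{BC first}. My plan is as follows.
\begin{itemize}
\item Express $\sigma\eta_{xx}/(1+\eta_x^2)^{3/2}$ through $\tilde z$. From $\eta_x=\tilde z/\sqrt{\sigma^2-\tilde z^2}$ one has $\sigma\eta_x/\sqrt{1+\eta_x^2}=\tilde z$, so the curvature term equals $\tilde z_x$.
\item Differentiate the definition \eqref{z bar} of $\tilde z$ in $x$, substituting $\dot z$, $\dot\varphi_y$ and $\dot\theta_y$ from the Hamiltonian equations.
\item Integrate by parts in $y$ using \eqref{IBPH1}. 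The boundary terms at $\bar y=1$ should produce $\tfrac12(\phi_x^2+\phi_y^2)-\phi_x-\omega\eta\phi_x+\omega\eta+\tfrac{\omega^2}{2}\eta^2$, evaluated with $\phi_x,\phi_y$ given by Step 1 at the surface.
\item The interior integrals should cancel. The condition $\varphi_y(1)=\tilde z(\theta_y(1)-1)/\sqrt{\sigma^2-\tilde z^2}$ removes the leftover surface terms.
\end{itemize}
I expect this cancellation to be long. I would organize it by writing $\tilde z_x$ as $\frac{d}{dx}$ of the integral and recognizing the integrand's $x$-derivative as a $y$-divergence plus terms killed by the $\dot z$ formula.
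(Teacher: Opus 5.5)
Your proposal is correct and follows essentially the paper's route. The chain-rule identities together with the $\dot\theta,\dot\varphi$ equations give \eqref{eq:reconstructed-CR}, and the kinematic condition comes from $\zeta(x,\eta(x))=\eta(x)$. The Bernoulli condition comes from differentiating $\tilde z=z+K/(1+\eta)$ and inserting $\dot z$; integration by parts then cancels the interior integrals, and $\varphi_y(1)=\eta_x(\theta_y(1)-1)$ turns the leftover surface term into $\phi_y^2$. The paper does this integration by parts in physical $y$ via the Cauchy--Riemann relations, while you do it in $\bar y$; the two are equivalent.

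One slip needs fixing. Since $\partial_x(\omega y)=0$, you get $\Delta\phi=\zeta_{yx}-\zeta_{xy}=0$ directly, so there is no typo in \eqref{eq:reconstructed-CR}; the $\omega$ enters only through $\Delta\zeta=-\omega$.
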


Before proving the reconstruction theorem, we record a representative lemma
which allows us to pass from the Hilbert-space formulation to functions on the
physical strip. This step is needed because the Hamiltonian system is posed as
an evolution equation in the spatial variable with values in a Sobolev phase
space. The lemma provides measurable representatives with enough two-variable
Sobolev regularity to justify the reconstruction identities below.

\begin{lemma}\label{lemalmost}
Suppose \(\gamma \in C^1((a,b),N^{\omega,\sigma})\) satisfies
\eqref{old Hamiltons eq} and
\(\gamma((a,b))\subset \mathrm{dom}(v_H^{\omega,\sigma})\subset M^{\omega,\sigma}\).
Write
\[
        \gamma(x)=(\varphi(x,\cdot),\theta(x,\cdot),z(x),\eta(x)).
\]
Then there exist representatives \(\tilde{\varphi}\) and \(\tilde{\theta}\) which are
measurable on \((a,b)\times(0,1)\) and belong to
\(H^2((x_1,x_2)\times(0,1))\) for every \(x_1,x_2\) with
\(a<x_1<x_2<b\). These functions satisfy
\begin{align}\label{equivarphi}
\tilde{\varphi}_x
&=\dfrac{1}{\eta(x)+1}
\left(
\tilde{\theta}_y
+\dfrac{\tilde{z}_*(x)(y\tilde{\varphi}_y-\tilde{\varphi}(x,1))}
{\sqrt{\sigma^2-\tilde{z}_*^2(x)}}
\right)
+\omega(\eta(x)+1)\left(y-\dfrac{1}{2}\right)
-\dfrac{\omega}{\eta(x)+1}\left(y-\dfrac{1}{2}\right),\\
\label{equitheta}
\tilde{\theta}_x
&=\dfrac{1}{\eta(x)+1}
\left(
\dfrac{\tilde{z}_*(x)y(\tilde{\theta}_y+\omega(1-y)-1)}
{\sqrt{\sigma^2-\tilde{z}_*^2(x)}}
-\tilde{\varphi}_y
\right),
\end{align}
where
\begin{align} \label{equizs}
    \tilde{z}_*(x)
    := z(x)+\int_0^1
    \dfrac{y\tilde{\varphi}_{y}
    \left(\tilde{\theta}_{y}+\omega(1-y)-1\right)}
    {\eta(x)+1}\,dy
    =\tilde{z}(x),
    \qquad |\tilde{z}_*|<\sigma,
\end{align}
in the weak sense on \((x_1,x_2)\times(0,1)\), with boundary conditions
\begin{align}\label{eqb1}
\tilde{\theta}(x,0) &= 0, \\
\label{eqb2}
\tilde{\theta}(x,1) &= 0, \\
\label{eqb3}
\tilde{\varphi}_y(x,0) &= 0, \\
\label{eqb4}
\tilde{\varphi}_y(x,1)
&= \frac{\tilde{z}_*(x)(\tilde{\theta}_y(x,1)-1)}
{\sqrt{\sigma^2-\tilde{z}_*^2(x)}}.
\end{align}
Moreover, \(\tilde{\varphi}\) has zero mean in the \(y\)-variable:
\begin{align}\label{equivar}
    \int_0^1 \tilde{\varphi}(x,y)\,dy=0.
\end{align}
Finally, the functions \(z,\eta\in C^2((a,b),\mathbb R)\) satisfy
\begin{align}\label{equiz}
z_x(x)
&= \int_0^1
\left(
\dfrac{1}{2(\eta(x)+1)^2}
\left((\tilde{\theta}_y+\omega(1-y)-1)^2-\tilde{\varphi}_y^2\right)
-\dfrac{1}{2}
\right)\,dy \\
\notag
&\quad
+\dfrac{\tilde{z}_*(x)}{\sqrt{\sigma^2-\tilde{z}_*^2(x)}}
\int_0^1
\dfrac{y\tilde{\varphi}_y(\tilde{\theta}_y+\omega(1-y)-1)}
{(\eta(x)+1)^2}\,dy
-\int_0^1 \omega y(\tilde{\theta}_y+\omega(1-y)-1)\,dy,
\\
\label{equieta}
\eta_x(x)
&= \dfrac{\tilde{z}_*(x)}{\sqrt{\sigma^2-\tilde{z}_*^2(x)}},
\end{align}
again in the weak sense.
\end{lemma}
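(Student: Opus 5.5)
The plan is to construct the representatives by passing from vector-valued Sobolev regularity in $x$ to joint Sobolev regularity on the rectangle. Since $\gamma\in C^1((a,b),N^{\omega,\sigma})$, on each compact subinterval $[x_1,x_2]$ we have $\varphi,\theta\in C^1([x_1,x_2],H^1(0,1))\cap C^0([x_1,x_2],H^2(0,1))$. The equation \eqref{old Hamiltons eq} holds with $\dot\gamma$ in $X^1$.

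\textbf{Step 1 (measurable representatives).} I would use the standard identification $L^2(x_1,x_2;L^2(0,1))\cong L^2((x_1,x_2)\times(0,1))$ (Fubini plus strong measurability of continuous Hilbert-valued maps). This gives measurable functions $\tilde\varphi,\tilde\theta$ on $(a,b)\times(0,1)$ that agree for every $x$ with $\varphi(x,\cdot)$ and $\theta(x,\cdot)$ a.e. in $y$. One can build them on an exhausting sequence of intervals and patch them, since the identification is consistent. The $y$-derivatives of order $\le 2$ of these functions are the images of $\partial_y^k\varphi(x)\in L^2$, so they lie in $L^2$ of the rectangle, because $x\mapsto\varphi(x)\in H^2$ is continuous and hence bounded on $[x_1,x_2]$.

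\textbf{Step 2 ($x$-derivatives).} The strong derivative $\dot\varphi\in C^0([x_1,x_2],H^1)$ should coincide with the weak derivative $\tilde\varphi_x$. To check this, pair with a test function $\chi(x)\psi(y)$ and use the vector-valued fundamental theorem of calculus, then extend by density of tensor products in $C_c^\infty$. The same tensor argument gives $\partial_y\partial_x=\partial_x\partial_y$ weakly, so $\tilde\varphi_{xy}=\partial_y\dot\varphi\in L^2$. It remains to control $\tilde\varphi_{xx}$, and the only source for this is the equation. The right-hand side of \eqref{Old Hamilton's equations 2} for $\dot\varphi$ is a $C^1$ map of $x$ into $L^2$. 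This holds because $x\mapsto\gamma(x)\in X^2$ is $C^0$, $x\mapsto \gamma(x)\in X^1$ is $C^1$, and the right-hand side involves only $\theta_y,\varphi_y,\varphi(1)$ and the scalars $z,\eta,\tilde z$. Here $\tilde z$ is a smooth function of $(\varphi,\theta,z,\eta)\in X^1$, and $|\tilde z|<\sigma$ is preserved. Therefore $\dot\varphi\in C^1([x_1,x_2],L^2)$ and $\tilde\varphi_{xx}\in L^2$. The same reasoning applies to $\theta$, which yields $H^2$ regularity of $\tilde\varphi,\tilde\theta$ on each rectangle. I expect this step to be the main obstacle: one has to verify carefully that the composite right-hand side is differentiable into $L^2$ using only the $X^1$-differentiability, with the products $y\varphi_y\,\theta_y$ handled by Sobolev embeddings in one variable.

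\textbf{Step 3 (remaining identities).} Equations \eqref{equivarphi} and \eqref{equitheta} then follow directly from \eqref{Old Hamilton's equations 2} through the identification of Step 2. The identity $\tilde z_*=\tilde z$ holds because the integrand agrees with the one in \eqref{z bar} for every $x$. The boundary conditions \eqref{eqb1}--\eqref{eqb4} and the mean-zero condition \eqref{equivar} hold for every $x$, because $\gamma(x)\in\mathrm{dom}(v_H^{\omega,\sigma})\subset M_0$. Here the traces are those of \eqref{eq:trace_identities}, which agree with the traces of the $H^2$ function on the rectangle's horizontal sides, again by a tensor-product density argument.

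\textbf{Step 4 (regularity of $z,\eta$).} The scalar equations \eqref{equiz} and \eqref{equieta} are the last two components of \eqref{Old Hamilton's equations 2}. Their right-hand sides are $C^1$ in $x$, since they are smooth functions of $\gamma(x)\in X^1$, which is $C^1$. Hence $z,\eta\in C^2$.
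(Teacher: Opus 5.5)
Your proposal is correct and follows essentially the paper's route: you build representatives from the vector-valued regularity, identify the weak $x$-derivatives by testing against tensor products $\chi(x)\psi(y)$, and let the equation supply the missing second $x$-derivative. The only variation is in that last step. You obtain $\tilde\varphi_{xx},\tilde\theta_{xx}$ by noting that the right-hand side of \eqref{old Hamiltons eq} is a smooth map from $X^1$ into $L^2(0,1)$ (the trace $\varphi(1)$ is bounded on $H^1$), so the chain rule gives $\dot\varphi,\dot\theta\in C^1([x_1,x_2],L^2(0,1))$; the paper instead differentiates the weak equation in $x$ and checks $\partial_x\tilde\varphi(x,1)=\tilde\varphi_x(x,1)$ by hand via \eqref{eq:trace_identities}. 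Your version is slightly cleaner, and the obstacle you anticipated does not arise, since the products $y\varphi_y\theta_y$ occur only inside integrals, where Cauchy--Schwarz suffices.
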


\begin{proof}
It suffices to construct the representatives on an arbitrary fixed interval
\((x_1,x_2)\subset(a,b)\). The construction below is compatible under restriction:
if \((x_1,x_2)\subset(x_1',x_2')\subset(a,b)\), then the representatives constructed
on \((x_1',x_2')\times(0,1)\) agree with those constructed on
\((x_1,x_2)\times(0,1)\), up to a set of measure zero. A countable extension
therefore gives measurable representatives on \((a,b)\times(0,1)\).

Fix \((x_1,x_2)\subset(a,b)\). Since
\(\gamma\in C^1((a,b),N^{\omega,\sigma})\), we have
\(\varphi,\theta\in C^1((a,b),H^2(0,1))\), and hence
\(\varphi,\theta\in H^1((x_1,x_2),H^1(0,1))\). By
\cite[Part II, Lemma~10.1]{Friedman1969}, there exist
\(\tilde{\varphi},\tilde{\theta}\in H^1((x_1,x_2)\times(0,1))\) such that, for
almost every \(x\in(x_1,x_2)\),
\[
        \tilde{\varphi}(x,\cdot)=\varphi(x,\cdot),
        \qquad
        \tilde{\theta}(x,\cdot)=\theta(x,\cdot).
\]
Since \(\varphi,\theta\in C((a,b),H^2(0,1))\), we also have
\(\varphi_{yy},\theta_{yy}\in L^2((x_1,x_2),L^2(0,1))\). By the same argument,
there exist \(\bar{\varphi},\bar{\theta}\in L^2((x_1,x_2)\times(0,1))\) such
that, for almost every \(x\in(x_1,x_2)\),
\[
        \bar{\varphi}(x,\cdot)=\varphi_{yy}(x,\cdot),
        \qquad
        \bar{\theta}(x,\cdot)=\theta_{yy}(x,\cdot).
\]
For every smooth function \(\psi\) with compact support in
\((x_1,x_2)\times(0,1)\), we have
\begin{equation}
\begin{aligned}
   \int_{x_1}^{x_2}\int_0^1
   \tilde{\varphi}(x,y)\psi_{yy}(x,y)\,dxdy &= \int_{x_1}^{x_2}
   \left(
   \int_0^1 {\varphi}(x,y)\psi_{yy}(x,y)\,dy
   \right)\,dx \\
   &= \int_{x_1}^{x_2}
   \left(
   \int_0^1 {\varphi}_{yy}(x,y)\psi(x,y)\,dy
   \right)\,dx \\
   &=
   \int_{x_1}^{x_2}\int_0^1
   \bar{\varphi}(x,y)\psi(x,y)\,dxdy.
\end{aligned}
\end{equation}
Thus \(\tilde{\varphi}_{yy}=\bar{\varphi}\in L^2((x_1,x_2)\times(0,1))\), and
for almost every \(x\in(x_1,x_2)\),
\begin{equation}
\tilde{\varphi}_{yy}(x,\cdot)
        =
        \bar{\varphi}(x,\cdot)
        =
        \varphi_{yy}(x,\cdot).
\end{equation}
The same argument gives $\tilde{\theta}_{yy}=\bar{\theta}\in L^2((x_1,x_2)\times(0,1))$ and $\tilde{\theta}_{yy}(x,\cdot)=\theta_{yy}(x,\cdot)$ for almost every \(x\in(x_1,x_2)\). The identities
\eqref{equizs}--\eqref{equivar}, \eqref{equiz}, and \eqref{equieta} then follow
directly from these representatives and the corresponding properties of
\eqref{old Hamiltons eq}.

We now prove \eqref{equivarphi} and \eqref{equitheta}. By
\cite[Chapitre~IV, \S~3, Th\'eor\`eme~III]{Schwartz1966}, it is enough to test
against functions of the form \(\phi(x)\psi(y)\), where
\(\phi\in C_c^\infty(x_1,x_2)\) and \(\psi\in C_c^\infty(0,1)\). Fix such
\(\phi\) and \(\psi\). From the second equation of \eqref{old Hamiltons eq},
the map
\[
        x\mapsto \int_0^1 \theta(x,y)\psi(y)\,dy
\]
is \(C^1\), and for almost every \(x\in(x_1,x_2)\),
\begin{equation}
\begin{aligned}
\frac{d}{dx}
\left(
\int_0^1 \theta(x,y)\psi(y)\,dy
\right)
&=
\int_0^1 \dot{\theta}(x,y)\psi(y)\,dy \\
&=
\int_0^1
\dfrac{1}{\eta(x)+1}
\left(
\dfrac{\tilde{z}_*(x)y(\tilde{\theta}_y+\omega(1-y)-1)}
{\sqrt{\sigma^2-\tilde{z}_*^2(x)}}
-\tilde{\varphi}_y
\right)
\psi(y)\,dy .
\end{aligned}
\end{equation}
Consequently,
\begin{equation}
\begin{aligned}
 &\int_{x_1}^{x_2}\int_0^1
 \tilde{\theta}(x,y)\phi_x(x)\psi(y)\,dxdy \\
 &=
 \int_{x_1}^{x_2}
 \left(
 \int_0^1 \theta(x,y)\psi(y)\,dy
 \right)
 \phi_x(x)\,dx \\
 &=
 -\int_{x_1}^{x_2}
 \frac{d}{dx}
 \left(
 \int_0^1 \theta(x,y)\psi(y)\,dy
 \right)
 \phi(x)\,dx \\
 &=
 -\int_{x_1}^{x_2}\int_0^1
 \dfrac{1}{\eta(x)+1}
 \left(
 \dfrac{\tilde{z}_*(x)y(\tilde{\theta}_y+\omega(1-y)-1)}
 {\sqrt{\sigma^2-\tilde{z}_*^2(x)}}
 -\tilde{\varphi}_y
 \right)
 \phi(x)\psi(y)\,dxdy.
\end{aligned}
\end{equation}
This proves \eqref{equitheta}. The proof of \eqref{equivarphi} is identical.

It is also immediate from \eqref{old Hamiltons eq} that
\(z,\eta\in C^2((a,b),\mathbb R)\). It remains to prove
\(\tilde{\varphi},\tilde{\theta}\in H^2((x_1,x_2)\times(0,1))\). Since
\(\tilde{\varphi}_{yy}\) and \(\tilde{\theta}_{yy}\) exist and belong to
\(L^2((x_1,x_2)\times(0,1))\), taking weak derivatives with respect to \(y\) in
\eqref{equivarphi} and \eqref{equitheta} shows that
\(\tilde{\varphi}_{xy}\) and \(\tilde{\theta}_{xy}\) also exist and belong to
\(L^2((x_1,x_2)\times(0,1))\). The same argument gives
\(\tilde{\varphi}_{xx}\) and \(\tilde{\theta}_{xx}\), once we know that the weak
derivative of
\[
        \Phi(x):=\tilde{\varphi}(x,1)
\]
exists on \((x_1,x_2)\). To see this, let \(\phi\in C_c^\infty(x_1,x_2)\) and
choose \(\psi_\varepsilon\in C_c^\infty(0,1)\) with
\(\psi_\varepsilon\to1\) as \(\varepsilon\to0^+\). By trace identities \eqref{eq:trace_identities},
\begin{equation}
\begin{aligned}
    &\int_{x_1}^{x_2} \Phi(x)\phi_x(x)\,dx \\
    &=
    \int_{x_1}^{x_2}\int_0^1
    y\tilde{\varphi}_y(x,y)\phi_x(x)\,dxdy
    +
    \int_{x_1}^{x_2}\int_0^1
    \tilde{\varphi}(x,y)\phi_x(x)\,dxdy \\
    &=
    \lim_{\varepsilon\to0^+}
    \left(
    \int_{x_1}^{x_2}\int_0^1
    y\tilde{\varphi}_y(x,y)\phi_x(x)\psi_\varepsilon(y)\,dxdy
    +
    \int_{x_1}^{x_2}\int_0^1
    \tilde{\varphi}(x,y)\phi_x(x)\psi_\varepsilon(y)\,dxdy
    \right) \\
    &=
    -\lim_{\varepsilon\to0^+}
    \left(
    \int_{x_1}^{x_2}\int_0^1
    y\tilde{\varphi}_{xy}(x,y)\phi(x)\psi_\varepsilon(y)\,dxdy
    +
    \int_{x_1}^{x_2}\int_0^1
    \tilde{\varphi}_x(x,y)\phi(x)\psi_\varepsilon(y)\,dxdy
    \right) \\
    &=
    -\int_{x_1}^{x_2}\int_0^1
    y\tilde{\varphi}_{xy}(x,y)\phi(x)\,dxdy
    -
    \int_{x_1}^{x_2}\int_0^1
    \tilde{\varphi}_x(x,y)\phi(x)\,dxdy \\
    &=
    -\int_{x_1}^{x_2}
    \tilde{\varphi}_x(x,1)\phi(x)\,dx.
\end{aligned}
\end{equation}
Hence \(\Phi_x=\tilde{\varphi}_x(\cdot,1)\). The proof is complete.
\end{proof}

\begin{proof}[Proof of Theorem~\ref{thm:Hamiltonian_equivalence}]
We write \(\tilde z\) for the function \(\tilde z_*\) in \eqref{equizs}. All
identities below are understood weakly on \(\Theta\). Differentiating the
reconstruction formulas \eqref{eq:reconstruction-phi} and
\eqref{eq:reconstruction-zeta}, with \(y=(1+\eta(x))\bar y-1\), gives
\begin{align}
(1+\eta)\phi_y
&= \varphi_{\bar y}, \label{eq:rec-phi-y}\\
(1+\eta)\zeta_y
&= \theta_{\bar y}+\omega(1-\bar y)+\eta, \label{eq:rec-zeta-y}\\
\phi_x+\bar y\eta_x\phi_y
&= \varphi_x+\frac{\eta_x}{1+\eta}\varphi(x,1)
+1+\frac{\omega}{2}(\eta-1)+\frac{\omega-2}{2(1+\eta)}, \label{eq:rec-phi-x}\\
\zeta_x+\bar y\eta_x\zeta_y
&= \theta_x+\bar y\eta_x. \label{eq:rec-zeta-x}
\end{align}
Using \eqref{equitheta} and \eqref{equieta}, we have
\begin{equation}\label{eq:theta-x-substitution}
\theta_x(x,\bar y)
=
\frac{1}{1+\eta}
\left(
\eta_x\bar y\bigl(\theta_{\bar y}(x,\bar y)+\omega(1-\bar y)-1\bigr)
-\varphi_{\bar y}(x,\bar y)
\right).
\end{equation}
Combining \eqref{eq:rec-phi-y}, \eqref{eq:rec-zeta-y},
\eqref{eq:rec-zeta-x}, and \eqref{eq:theta-x-substitution} yields
\begin{equation}\label{eq:zeta-x-minus-phi-y}
        \zeta_x=-\phi_y.
\end{equation}
Similarly, combining \eqref{eq:rec-phi-y}--\eqref{eq:rec-zeta-x} with
\eqref{equivarphi} gives
\begin{equation}\label{eq:zeta-y-phi-x}
        \zeta_y=\phi_x-\omega y.
\end{equation}
It follows that
\begin{alignat}{2}
        \phi_{xx}+\phi_{yy}&=0
        &\quad &\text{in }\Theta, \label{eq:phi-harmonic}\\
        \zeta_{xx}+\zeta_{yy}&=-\omega
        &\quad &\text{in }\Theta. \label{eq:zeta-poisson}
\end{alignat}
Thus \(\phi\) and \(\zeta+\frac{\omega}{2}y^2\) are harmonic, and hence smooth
in the interior.

From \eqref{eq:reconstruction-zeta} and \eqref{eqb1}--\eqref{eqb2}, the stream
function satisfies
\begin{equation}\label{eq:zeta-boundary-values}
        \zeta(x,-1)=-\frac{\omega}{2},
        \qquad
        \zeta(x,\eta(x))=\eta(x).
\end{equation}
Moreover, \eqref{eq:rec-phi-y} and \eqref{eqb3} give
\begin{equation}\label{eq:phi-bottom}
        \phi_y(x,-1)=0.
\end{equation}
Substituting \(\bar y=1\) into \eqref{eq:rec-zeta-x}, using
\eqref{eqb4} and \eqref{equitheta}, and then applying
\eqref{eq:zeta-x-minus-phi-y}--\eqref{eq:zeta-y-phi-x}, we obtain the kinematic
condition
\begin{equation}\label{eq:kinematic-reconstructed}
        \phi_y(x,\eta(x))
        =
        \bigl(\zeta_y(x,\eta(x))-1\bigr)\eta_x(x)
        =
        \phi_x(x,\eta(x))\eta_x(x)
        -\omega\eta(x)\eta_x(x)-\eta_x(x).
\end{equation}

It remains to verify the Bernoulli condition. Since \(\eta_x\) and \(\tilde z\)
have the same sign, \eqref{equieta} gives
\begin{equation}\label{eq:ztilde-eta-x}
        \tilde z
        =
        \frac{\sigma\eta_x}{\sqrt{1+\eta_x^2}}.
\end{equation}
We set
\begin{equation}\label{eq:K-def}
        K(x)
        :=
        \int_0^1
        \bar y\,\varphi_{\bar y}
        \bigl(\theta_{\bar y}+\omega(1-\bar y)-1\bigr)\,d\bar y.
\end{equation}
By the definition of \(\tilde z\) in \eqref{equizs}, differentiating in \(x\)
gives
\begin{equation}\label{eq:ztilde-differentiated}
        \left(\frac{K}{1+\eta}\right)_x
        =
        \left(\frac{\sigma\eta_x}{\sqrt{1+\eta_x^2}}\right)_x
        -z_x.
\end{equation}
On the other hand,
\begin{equation}\label{eq:K-quotient}
        \left(\frac{K}{1+\eta}\right)_x
        =
        -\frac{\eta_x}{(1+\eta)^2}K
        +\frac{1}{1+\eta}K_x.
\end{equation}
Substituting \eqref{eq:K-quotient} into
\eqref{eq:ztilde-differentiated}, and using \eqref{equiz} and
\eqref{equieta}, gives
\begin{equation}\label{eq:Bernoulli-preidentity}
\begin{aligned}
&\left(\frac{\sigma\eta_x}{\sqrt{1+\eta_x^2}}\right)_x
-\frac{1}{1+\eta}K_x \\
&\quad =
\int_0^1
\left[
\frac{(\theta_{\bar y}+\omega(1-\bar y)-1)^2-\varphi_{\bar y}^2}
{2(1+\eta)^2}
-\frac12
\right]\,d\bar y
-\int_0^1
\omega\bar y(\theta_{\bar y}+\omega(1-\bar y)-1)\,d\bar y \\
&\quad =
\frac{1}{2(1+\eta)}
\int_{-1}^{\eta}
\left((\zeta_y-1)^2-\phi_y^2\right)\,dy
-\frac12
-\frac{\omega}{1+\eta}
\int_{-1}^{\eta}(y+1)(\zeta_y-1)\,dy.
\end{aligned}
\end{equation}

We now compute \(K_x\). Since
\(\gamma\in C^1((a,b),N^{\omega,\sigma})\), differentiating
\eqref{eq:K-def} gives
\begin{equation}\label{eq:K-x-flat}
\begin{aligned}
K_x
&=
\int_0^1
\bar y\,\varphi_{x\bar y}
\bigl(\theta_{\bar y}+\omega(1-\bar y)-1\bigr)\,d\bar y
+
\int_0^1
\bar y\,\varphi_{\bar y}\theta_{x\bar y}\,d\bar y.
\end{aligned}
\end{equation}
Differentiating \eqref{eq:rec-phi-y} and \eqref{eq:rec-zeta-y} in \(x\), with
\(\bar y\) fixed, yields
\begin{align}
(1+\eta)\phi_{xy}
+\eta_x\phi_y
+\bar y\eta_x(1+\eta)\phi_{yy}
&=
\varphi_{x\bar y}, \label{eq:phi-x-ybar}\\
(1+\eta)\zeta_{xy}
+\eta_x(\zeta_y-1)
+\bar y\eta_x(1+\eta)\zeta_{yy}
&=
\theta_{x\bar y}. \label{eq:zeta-x-ybar}
\end{align}
Inserting \eqref{eq:phi-x-ybar}, \eqref{eq:zeta-x-ybar},
\eqref{eq:rec-phi-y}, and \eqref{eq:rec-zeta-y} into
\eqref{eq:K-x-flat}, and changing variables
\(\bar y=(y+1)/(1+\eta)\), gives
\begin{equation}\label{eq:K-x-physical}
\begin{aligned}
K_x
&=
\eta_x(1+\eta)\phi_y(\zeta_y-1)\big|_{y=\eta}+
\int_{-1}^{\eta}
(y+1)\phi_{xy}(\zeta_y-1)
+(y+1)\phi_y\zeta_{xy}\,dy.
\end{aligned}
\end{equation}
Using \eqref{eq:zeta-x-minus-phi-y} and \eqref{eq:zeta-y-phi-x}, we obtain
\begin{equation}\label{eq:K-x-final}
\begin{aligned}
-\frac{1}{1+\eta}K_x
&=
-\eta_x\phi_y(\zeta_y-1)\big|_{y=\eta}
-\frac{1}{1+\eta}
\int_{-1}^{\eta}
(y+1)(\zeta_{yy}+\omega)(\zeta_y-1)
-(y+1)\phi_y\phi_{yy}\,dy \\
&=
-\eta_x\phi_y(\zeta_y-1)\big|_{y=\eta}
-\frac{\omega}{1+\eta}
\int_{-1}^{\eta}(y+1)(\zeta_y-1)\,dy \\
&\quad
-\frac12
\left((\zeta_y-1)^2-\phi_y^2\right)\Big|_{y=\eta}
+\frac{1}{2(1+\eta)}
\int_{-1}^{\eta}
\left((\zeta_y-1)^2-\phi_y^2\right)\,dy .
\end{aligned}
\end{equation}
By \eqref{eq:kinematic-reconstructed},
\begin{equation}\label{eq:boundary-square}
        -\eta_x\phi_y(\zeta_y-1)\big|_{y=\eta}
        =
        -\phi_y^2(x,\eta(x)).
\end{equation}
Combining \eqref{eq:Bernoulli-preidentity}, \eqref{eq:K-x-final}, and
\eqref{eq:boundary-square}, we obtain the Bernoulli condition on \(y=\eta(x)\):
\begin{equation}\label{eq:Bernoulli-reconstructed}
        -\phi_x+\frac12\phi_x^2+\frac12\phi_y^2
        -\omega\eta\phi_x+\omega\eta+\frac{\omega^2}{2}\eta^2
        -\frac{\sigma\eta_{xx}}{(1+\eta_x^2)^{3/2}}
        =0.
\end{equation}
Together with \eqref{eq:phi-harmonic}, \eqref{eq:phi-bottom}, and
\eqref{eq:kinematic-reconstructed}, this proves that \((\phi,\eta)\) satisfies
\eqref{main fourth}. The proof is complete.
\end{proof}

\section{Fixed-domain Hamiltonian system and center-manifold reduction}\label{sec:center}
We next prepare the infinite-dimensional Hamiltonian system for the
center-manifold reduction. The point is that the domain of the Hamiltonian
vector field contains the nonlinear boundary condition \eqref{BC first}. We
remove this nonlinearity by a local change of variables.

\subsection{Removing the nonlinear boundary condition}

Fix a parameter value \((\omega_0,\sigma_0)\), and set
\begin{equation}\label{eq:center-epsilon-def}
        \varepsilon_1:=\omega-\omega_0,\qquad
        \varepsilon_2:=\sigma-\sigma_0,\qquad
        \varepsilon:=(\varepsilon_1,\varepsilon_2).
\end{equation}
We work near \(0\) and for \(|\varepsilon|\ll1\). In these parameters,
\eqref{old Hamiltons eq} becomes
\begin{equation}\label{eq:center-Hamiltonian-eps}
\begin{aligned}
\dot\varphi
&=
\frac{1}{1+\eta}
\left(
\theta_y+
\frac{\tilde z(y\varphi_y-\varphi(1))}
{\sqrt{(\sigma_0+\varepsilon_2)^2-\tilde z^{\,2}}}
\right)
+(\omega_0+\varepsilon_1)(1+\eta)\left(y-\frac12\right)
-\frac{\omega_0+\varepsilon_1}{1+\eta}\left(y-\frac12\right),
\\
\dot\theta
&=
\frac{1}{1+\eta}
\left(
\frac{\tilde z\,y(\theta_y+(\omega_0+\varepsilon_1)(1-y)-1)}
{\sqrt{(\sigma_0+\varepsilon_2)^2-\tilde z^{\,2}}}
-\varphi_y
\right),
\\
\dot z
&=
\int_0^1
\left[
\frac{(\theta_y+(\omega_0+\varepsilon_1)(1-y)-1)^2-\varphi_y^2}
{2(1+\eta)^2}
-\frac12
\right]\,dy
\\
&\quad
+
\frac{\tilde z}{\sqrt{(\sigma_0+\varepsilon_2)^2-\tilde z^{\,2}}}
\int_0^1
\frac{y\varphi_y(\theta_y+(\omega_0+\varepsilon_1)(1-y)-1)}
{(1+\eta)^2}\,dy
\\
&\quad
-\int_0^1
(\omega_0+\varepsilon_1)y
(\theta_y+(\omega_0+\varepsilon_1)(1-y)-1)\,dy,
\\
\dot\eta
&=
\frac{\tilde z}{\sqrt{(\sigma_0+\varepsilon_2)^2-\tilde z^{\,2}}},
\end{aligned}
\end{equation}
where
\begin{equation}\label{eq:center-ztilde-eps}
        \tilde z
        =
        z+
        \int_0^1
        \frac{
        y\varphi_y(\theta_y+(\omega_0+\varepsilon_1)(1-y)-1)}
        {1+\eta}\,dy .
\end{equation}
The boundary conditions are
\begin{equation}\label{eq:center-new-BC}
\begin{aligned}
        \varphi_y(0)&=0,\\
        \varphi_y(1)
        &=
        \frac{\tilde z(\theta_y(1)-1)}
        {\sqrt{(\sigma_0+\varepsilon_2)^2-\tilde z^{\,2}}}.
\end{aligned}
\end{equation}

We introduce the extended phase space
\begin{equation}\label{eq:center-extended-M}
        \widetilde M
        :=
        \left\{
        (m,\varepsilon)\in M_0\times\mathbb R^2:
        |\tilde z|<\sigma_0+\varepsilon_2,\ \eta>-1
        \right\}.
\end{equation}
Define \(f:\widetilde M\to M_0\times\mathbb R^2\) by
\begin{equation}\label{eq:center-map-f}
        f(\varphi,\theta,z,\eta,\varepsilon_1,\varepsilon_2)
        =
        (\Phi,\theta,Z,\eta,\varepsilon_1,\varepsilon_2),
\end{equation}
Here we adopt the notations
\begin{equation}\label{eq:center-Phi-Z-def}
\begin{aligned}
        \Phi
        &:=
        \varphi
        -
        W\left(
        \theta_{\mathrm{corr}}-\frac12\left(y^2-\frac13\right)
        \right),
        \\
        Z
        &:=
        -\varphi(1),
\end{aligned}
\end{equation}
where we define
\begin{equation}\label{eq:center-W-theta-p-def}
\begin{aligned}
        W
        &:=
        \frac{\tilde z}{\sqrt{(\sigma_0+\varepsilon_2)^2-\tilde z^{\,2}}},
        \\
        \theta_{\mathrm{corr}}(y)
        &:=
        \int_0^y s\,\theta_y(s)\,ds
        -
        \int_0^1\int_0^r s\,\theta_y(s)\,ds\,dr .
\end{aligned}
\end{equation}
On the target side, set
\begin{equation}\label{eq:center-R-Zp-def}
\begin{aligned}
        R
        &:=
        -\frac{\Phi(1)+Z}{\theta_{\mathrm{corr}}(1)-\frac13},
        \\
        Z_{\mathrm{corr}}
        &:=
        \int_0^1
        \frac{y}{1+\eta}
        \bigl(\Phi_y+Ry(\theta_y-1)\bigr)
        \bigl(\theta_y-1+(\omega_0+\varepsilon_1)(1-y)\bigr)\,dy .
\end{aligned}
\end{equation}

\begin{lemma}\label{lem:center-RW}
With the above definitions,
\begin{equation}\label{eq:center-RcircF}
        R\circ f=W .
\end{equation}
\end{lemma}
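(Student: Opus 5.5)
The identity is a direct substitution, so the plan is to evaluate \(R\) at the image point \(f(m,\varepsilon)\) and simplify. The key observation is that \(f\) leaves \(\theta\), \(\eta\) and \(\varepsilon\) unchanged. Hence \(\theta_{\mathrm{corr}}\), computed on the target side from the second component of \(f(m,\varepsilon)\), is the same function as \(\theta_{\mathrm{corr}}\) computed from \(m\). The only quantities that change are \(\Phi\) and \(Z\), and \(R\) depends on them only through the boundary combination \(\Phi(1)+Z\). So I only need to evaluate \(\Phi\) at \(y=1\).

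First, evaluate the definition \eqref{eq:center-Phi-Z-def} at \(y=1\). This uses the trace functional \eqref{eq:trace_identities}, which is linear, so it commutes with the subtraction of the smooth function \(\tfrac12(y^2-\tfrac13)\) and with multiplication by the scalar \(W\). Since \(\tfrac12(1-\tfrac13)=\tfrac13\), I get
\[
\Phi(1)=\varphi(1)-W\Bigl(\theta_{\mathrm{corr}}(1)-\tfrac13\Bigr).
\]
Here \(W\) is evaluated at the source point \((m,\varepsilon)\), which is well defined because \(|\tilde z|<\sigma_0+\varepsilon_2\) on \(\widetilde M\). Adding \(Z=-\varphi(1)\) cancels the \(\varphi(1)\) terms, leaving
\[
\Phi(1)+Z=-W\Bigl(\theta_{\mathrm{corr}}(1)-\tfrac13\Bigr).
\]
Dividing by \(-(\theta_{\mathrm{corr}}(1)-\tfrac13)\), as in the definition \eqref{eq:center-R-Zp-def} of \(R\), then gives \(R\circ f=W\).

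The only point needing care is that the denominator \(\theta_{\mathrm{corr}}(1)-\tfrac13\) is nonzero, since otherwise \(R\) is not defined. The map \(\theta\mapsto\theta_{\mathrm{corr}}(1)\) is a bounded linear functional on \(H^1(0,1)\) by Cauchy--Schwarz, and it vanishes at \(\theta=0\). Therefore \(\theta_{\mathrm{corr}}(1)-\tfrac13\) stays close to \(-\tfrac13\) on a neighbourhood of \(0\) in \(M_0\), which is the region near \(0\) where we work. I would state the lemma on this neighbourhood, or equivalently restrict \(\widetilde M\) to \(\{|\theta_{\mathrm{corr}}(1)|<\tfrac13\}\). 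With that restriction the computation above is rigorous, and no further obstacle is expected.
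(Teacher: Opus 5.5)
Your proposal is correct and matches the paper's proof: evaluate $\Phi$ at $y=1$, use $Z=-\varphi(1)$ to cancel $\varphi(1)$ and get $\Phi(1)+Z=-W\bigl(\theta_{\mathrm{corr}}(1)-\tfrac13\bigr)$, then substitute into the definition of $R$. Your added remark, that $\theta_{\mathrm{corr}}(1)-\tfrac13$ stays near $-\tfrac13$ close to the origin so that $R$ is well defined, is a clarification the paper leaves implicit.
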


\begin{proof}
If \(f(\varphi,\theta,z,\eta,\varepsilon)=(\Phi,\theta,Z,\eta,\varepsilon)\),
then
\begin{equation}\label{eq:center-RW-proof}
        \Phi(1)+Z
        =
        \varphi(1)
        -
        W\left(\theta_{\mathrm{corr}}(1)-\frac13\right)
        -
        \varphi(1)
        =
        -W\left(\theta_{\mathrm{corr}}(1)-\frac13\right).
\end{equation}
Substituting this identity into the definition of \(R\) gives
\eqref{eq:center-RcircF}.
\end{proof}

\begin{proposition}[Linearization of the boundary condition]
\label{lem:center-boundary-linearization}
The change of variables \eqref{eq:center-map-f} sends the nonlinear boundary
condition \eqref{eq:center-new-BC} to
\begin{equation}\label{eq:center-linear-BC}
        \Phi_y(0)=\Phi_y(1)=0 .
\end{equation}
\end{proposition}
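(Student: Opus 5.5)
Differentiating the definition of \(\Phi\) in \eqref{eq:center-Phi-Z-def} in \(y\) gives the whole argument. Since \(W\) is a scalar depending only on \(m\) and \(\varepsilon\), not on \(y\), we have
\begin{equation*}
        \Phi_y
        =
        \varphi_y
        -
        W\bigl(\partial_y\theta_{\mathrm{corr}}-y\bigr).
\end{equation*}
The second term of \(\theta_{\mathrm{corr}}\) in \eqref{eq:center-W-theta-p-def} is a constant, so by the fundamental theorem of calculus \(\partial_y\theta_{\mathrm{corr}}(y)=y\,\theta_y(y)\). This uses \(\theta\in H^2(0,1)\): then \(y\theta_y\in H^1(0,1)\subset C([0,1])\), the identity holds pointwise, and the traces of \(\Phi_y\) at \(y=0,1\) are meaningful. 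Hence
\begin{equation*}
        \Phi_y(y)=\varphi_y(y)-W\,y\bigl(\theta_y(y)-1\bigr).
\end{equation*}
Evaluating at the endpoints:
\begin{equation*}
        \Phi_y(0)=\varphi_y(0),
        \qquad
        \Phi_y(1)=\varphi_y(1)-W\bigl(\theta_y(1)-1\bigr).
\end{equation*}
By the definition of \(W\), the condition \eqref{eq:center-new-BC} says exactly \(\varphi_y(0)=0\) and \(\varphi_y(1)=W(\theta_y(1)-1)\). So \eqref{eq:center-new-BC} holds if and only if \(\Phi_y(0)=\Phi_y(1)=0\), which is \eqref{eq:center-linear-BC}.

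Some bookkeeping remains. Note that \(\int_0^1\bigl(\theta_{\mathrm{corr}}-\tfrac12(y^2-\tfrac13)\bigr)dy=0\), so \(\Phi\) keeps zero mean and \(f\) maps into the right space. Also, \(\theta\), \(\eta\) and \(\varepsilon\) are left unchanged by \(f\). To read the correspondence in the direction from target back to source, Lemma~\ref{lem:center-RW} shows that \(W\) can be recovered on the target side as \(R\circ f\). So \eqref{eq:center-linear-BC}, together with the relation \(R=W\), is equivalent to \eqref{eq:center-new-BC}, provided \(\theta_{\mathrm{corr}}(1)\neq\tfrac13\). This holds near the base point \(\theta=0\), where \(\theta_{\mathrm{corr}}(1)=0\).

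The main point requiring care is the regularity statement that \(\partial_y\theta_{\mathrm{corr}}=y\theta_y\) holds with continuous traces. This is immediate for \(H^2\) phase-space elements. No other obstacle arises: the computation is linear in \(\varphi\) because \(W\) is constant in \(y\).
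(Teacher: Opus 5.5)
Your proposal is correct and follows the paper's proof. You differentiate the definition of $\Phi$ to get $\Phi_y=\varphi_y-W\,y(\theta_y-1)$, then evaluate at $y=0$ and $y=1$ using the boundary conditions and the definition of $W$. Your extra remarks (the regularity justification, that $\Phi$ has zero mean, and the converse direction via $R\circ f=W$) go slightly beyond what the paper writes out but are consistent with it.
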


\begin{proof}
From \eqref{eq:center-Phi-Z-def},
\begin{equation}\label{eq:center-Phi-y}
        \Phi_y
        =
        \varphi_y
        -
        W\,y(\theta_y-1).
\end{equation}
At \(y=0\), this gives \(\Phi_y(0)=\varphi_y(0)=0\). At \(y=1\), using
\eqref{eq:center-new-BC} and the definition of \(W\), we obtain
\begin{equation}\label{eq:center-Phi-y-one}
        \Phi_y(1)
        =
        \varphi_y(1)
        -
        \frac{\tilde z(\theta_y(1)-1)}
        {\sqrt{(\sigma_0+\varepsilon_2)^2-\tilde z^{\,2}}}
        =
        0.
\end{equation}
\end{proof}

\begin{proposition}[Boundary-linearizing diffeomorphism]
\label{prop:center-boundary-diffeomorphism}
The map \(f\) is smooth, satisfies \(f(0)=0\), and is a local diffeomorphism near
the origin. More precisely, there are neighborhoods \(U_1,U_2\subset M_0\) and
\(\mathcal P\subset\mathbb R^2\) of \(0\) such that
\begin{equation}\label{eq:center-f-diffeo}
        f:U_1\times\mathcal P\longrightarrow U_2\times\mathcal P
\end{equation}
is a smooth diffeomorphism. Its inverse is
\begin{equation}\label{eq:center-f-inverse}
\begin{aligned}
f^{-1}(\Phi,\theta,Z,\eta,\varepsilon_1,\varepsilon_2)
=
\Bigg(
&\Phi
+
R\left(
\theta_{\mathrm{corr}}-\frac12\left(y^2-\frac13\right)
\right),
\theta, \frac{(\sigma_0+\varepsilon_2)R}{\sqrt{1+R^2}}-Z_{\mathrm{corr}},
\eta,\varepsilon_1,\varepsilon_2
\Bigg).
\end{aligned}
\end{equation}
\end{proposition}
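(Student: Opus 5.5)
The plan is to verify everything by direct computation, using the explicit formula \eqref{eq:center-f-inverse} as a candidate inverse. Write \(g\) for the map on the right-hand side of \eqref{eq:center-f-inverse}. I will check four things: \(f\) and \(g\) are smooth near the origin; both map into \(M_0\times\mathbb R^2\); \(f(0)=0=g(0)\); and \(g\circ f=\mathrm{id}\), \(f\circ g=\mathrm{id}\) on suitable neighborhoods. This gives the diffeomorphism directly, with no inverse function theorem, and the inverse is exactly \eqref{eq:center-f-inverse}.

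\textbf{Smoothness and mapping properties.} The following maps are bounded (multi)linear on \(H^1(0,1)\times\mathbb R\):
\[
\theta\mapsto\theta_{\mathrm{corr}}\in H^1,\qquad \varphi\mapsto\varphi(1),\qquad (\varphi,\theta)\mapsto\int_0^1 y\varphi_y\theta_y\,dy.
\]
For the trace this uses \eqref{traceb}, and for \(\theta_{\mathrm{corr}}\) note \((\theta_{\mathrm{corr}})_y=y\theta_y\in L^2\). The remaining nonlinearities are \(1/(1+\eta)\), \(\sqrt{(\sigma_0+\varepsilon_2)^2-\tilde z^{\,2}}\), \(1/(\theta_{\mathrm{corr}}(1)-\frac13)\) and \(\sqrt{1+R^2}\). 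Each is smooth where its argument stays away from the singular set. Near the origin \(\theta_{\mathrm{corr}}(1)-\frac13\) is close to \(-\frac13\), since \(\theta_{\mathrm{corr}}=0\) when \(\theta=0\). Hence \(f\) and \(g\) are smooth on small neighborhoods of \(0\) (this assumes \(\sigma_0>0\)).

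Both maps preserve the constraints of \(M_0\):
\begin{itemize}
\item \(\theta\) is unchanged, so \(\theta(0)=\theta(1)=0\) is kept.
\item \(\theta_{\mathrm{corr}}\) has zero mean by construction, and so does \(\frac12(y^2-\frac13)\). Hence \(\int_0^1\Phi\,dy=\int_0^1\varphi\,dy=0\).
\end{itemize}
At the origin \(\tilde z=0\), so \(W=0\) and \(f(0)=0\). Likewise \(R=0\) and \(Z_{\mathrm{corr}}=0\) there, so \(g(0)=0\).

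\textbf{\(g\circ f=\mathrm{id}\).} By Lemma~\ref{lem:center-RW}, \(R\circ f=W\). The first component of \(g\circ f\) is therefore
\[
\Phi+W\Big(\theta_{\mathrm{corr}}-\tfrac12\big(y^2-\tfrac13\big)\Big)=\varphi.
\]
For the third component, by \eqref{eq:center-Phi-y} we have \(\Phi_y+Wy(\theta_y-1)=\varphi_y\). Substituting into the definition \eqref{eq:center-R-Zp-def} of \(Z_{\mathrm{corr}}\) and comparing with \eqref{eq:center-ztilde-eps} gives \(Z_{\mathrm{corr}}\circ f=\tilde z-z\). From \(W=\tilde z/\sqrt{\sigma^2-\tilde z^{\,2}}\), with \(\sigma=\sigma_0+\varepsilon_2\), one gets \(1+W^2=\sigma^2/(\sigma^2-\tilde z^{\,2})\). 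Hence \(\sigma W/\sqrt{1+W^2}=\tilde z\), and the third component equals \(\tilde z-(\tilde z-z)=z\).

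\textbf{\(f\circ g=\mathrm{id}\).} Let \((\varphi,\theta,z,\eta,\varepsilon)=g(\Phi,\theta,Z,\eta,\varepsilon)\). Then
\[
\varphi_y=\Phi_y+Ry(\theta_y-1),
\]
so \(\int_0^1 y\varphi_y(\cdots)/(1+\eta)\,dy=Z_{\mathrm{corr}}\). It follows that \(\tilde z(\varphi,\ldots)=\sigma R/\sqrt{1+R^2}\). This automatically satisfies \(|\tilde z|<\sigma\), so \(g\) lands in \(\widetilde M\). Inverting the relation between \(W\) and \(\tilde z\) gives \(W=R\), so the first component of \(f\circ g\) returns \(\Phi\). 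For the third component,
\[
-\varphi(1)=-\Phi(1)-R\Big(\theta_{\mathrm{corr}}(1)-\tfrac13\Big)=Z
\]
by the definition of \(R\).

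The step needing most care is this consistency computation for \(\tilde z\) in \(f\circ g\), where the square roots and signs must match exactly. A second point of care is the choice of neighborhoods, so that each map lands in the other's domain. For that I take \(U_1\times\mathcal P\) small enough that \(f\) is defined there and \(f(U_1\times\mathcal P)\) lies in the domain of \(g\), and then set \(U_2\times\mathcal P:=g^{-1}(U_1\times\mathcal P)\cap\mathrm{dom}(g)\), shrunk if necessary to a product neighborhood. Continuity of \(f\) and \(g\), together with \(f(0)=0=g(0)\), guarantees such choices exist, and \(\varepsilon\) is carried along unchanged by both maps.
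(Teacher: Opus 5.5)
Your proof is correct, but it obtains invertibility differently from the paper. The paper computes $df(0)$ and $(df(0))^{-1}$ explicitly and applies the inverse function theorem to get a local diffeomorphism. Only then does it check \eqref{eq:center-f-inverse} by substitution, and once local invertibility is known one composition suffices.

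You skip the inverse function theorem and verify both $g\circ f=\mathrm{id}$ and $f\circ g=\mathrm{id}$ directly. You use $R\circ f=W$, $\Phi_y+Wy(\theta_y-1)=\varphi_y$, and the equivalence $W=\tilde z/\sqrt{\sigma^2-\tilde z^{2}}\iff\tilde z=\sigma W/\sqrt{1+W^2}$ for $\sigma>0$; all of these computations check out.

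Because $\theta$, $\eta$, $\varepsilon$ are untouched by both maps, your argument actually proves more. It shows that $f$ is a diffeomorphism from the explicit open set $\widetilde M\cap\{\theta_{\mathrm{corr}}(1)\neq\frac13\}$ onto $\{\theta_{\mathrm{corr}}(1)\neq\frac13,\ \eta>-1,\ \sigma_0+\varepsilon_2>0\}$, uniformly in the parameters. The same computation also runs verbatim in $X^2$, since $(\theta_{\mathrm{corr}})_{yy}=\theta_y+y\theta_{yy}$. That gives Proposition~\ref{prop:center-restricted-diffeomorphism} without a second appeal to the inverse function theorem.

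What the paper's route provides is the explicit pair $df(0)$, $(df(0))^{-1}$, which is needed later anyway. In particular, the constant form $\Psi=\tilde\Omega^0_0$ in \eqref{PSI} is the pullback of $\Omega$ by $(df(0))^{-1}$.

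One bookkeeping point in your last paragraph does not work as written. The set $g^{-1}(U_1\times\mathcal P)\cap\mathrm{dom}(g)=f(U_1\times\mathcal P)$ is in general not a product, because $f_1^\varepsilon(U_1)$ moves with $\varepsilon$. Shrinking it to a product then destroys surjectivity from $U_1\times\mathcal P$.

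The consistent choice is to fix a product $U_2\times\mathcal P\subset\mathrm{dom}(g)$ and take the source to be $g(U_2\times\mathcal P)$. This is an open neighborhood of $0$ whose fibres $g^\varepsilon(U_2)$ depend on $\varepsilon$. It is also what is actually used afterwards: a fixed neighborhood on which $(f_1^\varepsilon)^{-1}$ is defined for all $\varepsilon\in\mathcal P$.

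The paper's inverse-function-theorem argument does not produce product neighborhoods on both sides either. So this is an imprecision in the statement itself, and your global inverse identity is the cleanest way to make it precise.
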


\begin{proof}
The smoothness of \(f\) follows from the definitions and the inequality
\(|\tilde z|<\sigma_0+\varepsilon_2\). A direct computation gives
\begin{equation}\label{eq:center-df0}
\begin{aligned}
df(0)(\varphi,\theta,z,\eta,\varepsilon_1,\varepsilon_2)
=
\Bigg(
&\varphi
+
\frac{1}{2\sigma_0}
\left(y^2-\frac13\right)
\left(
z-\varphi(1)+2\omega_0\int_0^1 y\varphi\,dy
\right),
\\
&\theta,\ -\varphi(1),\ \eta,\ \varepsilon_1,\ \varepsilon_2
\Bigg),
\end{aligned}
\end{equation}
with inverse
\begin{equation}\label{eq:center-df0-inverse}
\begin{aligned}
(df(0))^{-1}(\Phi,\theta,Z,\eta,\varepsilon_1,\varepsilon_2)
=
\Bigg(
&\Phi
-\frac32\left(y^2-\frac13\right)(\Phi(1)+Z),
\theta,
\\
&\left(3\sigma_0-1+\frac{\omega_0}{4}\right)(\Phi(1)+Z)
+\Phi(1)-2\omega_0\int_0^1 y\Phi\,dy,
\eta,\varepsilon_1,\varepsilon_2
\Bigg).
\end{aligned}
\end{equation}
Thus \(df(0)\) is a bounded linear isomorphism. The inverse function theorem
\cite[\S~2.5]{Abraham1988} gives the local diffeomorphism
\eqref{eq:center-f-diffeo}. Formula \eqref{eq:center-f-inverse} is verified by
direct substitution, using Lemma~\ref{lem:center-RW}.
\end{proof}

Let \(\imath:N_0\hookrightarrow M_0\) denote the embedding. Define
\begin{equation}\label{eq:center-tilde-f-def}
        \tilde f_1^\varepsilon:=f_1^\varepsilon\circ\imath,
        \qquad
        \tilde f(m,\varepsilon):=(\tilde f_1^\varepsilon(m),\varepsilon),
\end{equation}
and set
\begin{equation}\label{eq:center-tilde-N-def}
        \widetilde N
        :=
        \left\{
        (m,\varepsilon)\in N_0\times\mathbb R^2:
        |\tilde z|<\sigma_0+\varepsilon_2,\ \eta>-1
        \right\}.
\end{equation}

\begin{proposition}[Restriction to the \(X^2\)-domain]
\label{prop:center-restricted-diffeomorphism}
The map \(\tilde f\) is a smooth local diffeomorphism from a neighborhood
\(U_3\times\mathcal P\) of the origin in \(\widetilde N\) onto a neighborhood
\(U_4\times\mathcal P\) of the origin in \(N_0\times\mathbb R^2\). Its inverse
is the restriction of \eqref{eq:center-f-inverse}.
\end{proposition}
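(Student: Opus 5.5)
The plan is to show that \(f\) and the explicit inverse formula \eqref{eq:center-f-inverse} both preserve \(X^2\)-regularity and are smooth in the \(X^2\)-topology. Proposition~\ref{prop:center-boundary-diffeomorphism} then shows that they are mutually inverse, and the statement follows. We do not apply the inverse function theorem again. The key structural observation is that \(f\) changes only \(\varphi\) and \(z\), and only by \emph{finite-rank} corrections: \(\Phi-\varphi=-W\bigl(\theta_{\mathrm{corr}}-\frac12(y^2-\frac13)\bigr)\) and \(Z=-\varphi(1)\). Here \(W\) is a smooth scalar functional of \((m,\varepsilon)\) on \(\widetilde M\).

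First I check that \(\tilde f\) maps \(\widetilde N\) into \(N_0\times\mathbb R^2\) smoothly. The scalar \(\tilde z\) from \eqref{eq:center-ztilde-eps} is a smooth functional on \(M_0\times\mathbb R^2\), because the integrand \(y\varphi_y(\theta_y+\dots)/(1+\eta)\) is bilinear in \(H^1\)-derivatives, which lie in \(L^2\). Composing with the embedding \(\imath:N_0\hookrightarrow M_0\), which is bounded linear, keeps it smooth. So \(W\) is smooth on \(\widetilde N\) as well. The map \(\theta\mapsto\theta_{\mathrm{corr}}\) is bounded linear from \(H^2\) to \(H^2\), since \((\theta_{\mathrm{corr}})_y=y\theta_y\in H^1\) when \(\theta\in H^2\). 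Hence \(\Phi\in H^2\), and \(\Phi\mapsto\Phi\) is a smooth map into \(H^2\): it is a product of a smooth scalar with a smooth (linear) \(H^2\)-valued map. Next I check that \(\int_0^1\Phi\,dy=0\). Both \(\theta_{\mathrm{corr}}\) and \(y^2-\frac13\) have zero mean by construction, so the mean-zero constraint of \(M_0\) is preserved. The conditions \(\theta(0)=\theta(1)=0\) are untouched. Finally, \(Z=-\varphi(1)\) is bounded linear by \eqref{traceb}. Thus \(\tilde f\) is smooth from \(\widetilde N\) into \(N_0\times\mathbb R^2\).

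Second, I treat the inverse in the same way. The formula \eqref{eq:center-f-inverse} involves \(R\) and \(Z_{\mathrm{corr}}\) from \eqref{eq:center-R-Zp-def}. Both are smooth scalar functionals on a neighbourhood of the origin in \(M_0\times\mathbb R^2\). For \(R\), the denominator satisfies \(\theta_{\mathrm{corr}}(1)-\frac13\to-\frac13\neq0\) near \(\theta=0\). For \(Z_{\mathrm{corr}}\), it is an integral of products of \(H^1\)-derivatives, hence \(L^2\) products. The \(\varphi\)-component is \(\Phi+R(\dots)\), which again lies in \(H^2\) by the same finite-rank argument. So the restriction \(g\) of \(f^{-1}\) maps a neighbourhood of \(0\) in \(N_0\times\mathbb R^2\) smoothly into \(N_0\times\mathbb R^2\).

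Third, I choose the neighbourhoods. Take \(U_4:=U_2\cap N_0\), shrunk if needed to be open in \(N_0\) (it is already open, since the embedding is continuous). Then set \(U_3:=\tilde f_1^{-1}(U_4)\cap U_1\cap N_0\), shrinking \(\mathcal P\) if necessary. The restricted maps are restrictions of the mutually inverse maps \(f\) and \(f^{-1}\) of Proposition~\ref{prop:center-boundary-diffeomorphism}. Both send \(X^2\)-points to \(X^2\)-points, so they are mutually inverse bijections \(U_3\times\mathcal P\to U_4\times\mathcal P\). Both are \(X^2\)-smooth by the first two steps, which gives the diffeomorphism.

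The main obstacle is the bookkeeping showing that \(g(U_4\times\mathcal P)\subset\widetilde N\). This requires \(|\tilde z|<\sigma_0+\varepsilon_2\) and \(\eta>-1\), which follow by continuity near \(0\). It also requires that \(U_3\) is open in \(N_0\). I would get this from continuity of \(\tilde f\) in the \(X^2\)-topology, rather than from the \(X^1\)-openness of \(U_1\) alone.
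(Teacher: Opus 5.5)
Your proof is correct, but it follows a different route from the paper's.

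The paper applies the inverse function theorem a second time, now in the $X^2$-topology. From the fact that \eqref{eq:center-f-inverse} preserves $N_0$-regularity, it deduces that $d\tilde f_1^\varepsilon(0)=df_1^\varepsilon(0)\circ\imath$ and its inverse are bounded on $X^2$, and concludes.

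You avoid the inverse function theorem altogether. Both $f$ and its explicit inverse change only the $\varphi$- and $z$-components, and they do so through finite-rank corrections:
\begin{itemize}
\item a smooth scalar ($W$ or $R$) times the $H^2$-bounded affine expression $\theta_{\mathrm{corr}}-\frac12(y^2-\frac13)$;
\item smooth scalars ($-\varphi(1)$, $R$, $Z_{\mathrm{corr}}$).
\end{itemize}
Hence both maps are smooth on the $X^2$-parts of their domains, and bijectivity is inherited from Proposition~\ref{prop:center-boundary-diffeomorphism}.

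Your route gives slightly more than the paper's argument:
\begin{itemize}
\item It verifies the $X^2$-smoothness of $\tilde f$, which the inverse function theorem needs but the paper uses only tacitly.
\item It identifies the inverse with the restriction of \eqref{eq:center-f-inverse} at once. After the inverse function theorem, one still has to note that the $X^2$ local inverse agrees with $f^{-1}$ by injectivity of $f$ on $U_1$.
\item It shows that the $X^2$-neighbourhoods are simply traces of the $X^1$-ones.
\end{itemize}

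The obstacle you flag at the end actually disappears. Your first two steps show that $\tilde f$ maps $(U_1\cap N_0)\times\mathcal P$ bijectively onto $(U_2\cap N_0)\times\mathcal P$. So you can take $U_3=U_1\cap N_0$ and $U_4=U_2\cap N_0$, both open in $N_0$ because the embedding is continuous, with no shrinking of $\mathcal P$. The inclusion $g(U_4\times\mathcal P)\subset\widetilde N$ is then automatic, since $U_1\times\mathcal P\subset\widetilde M$.

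In exchange, the paper's route is shorter: it only needs to check the linearization at the origin on $X^2$.
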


\begin{proof}
The formula \eqref{eq:center-f-inverse} preserves the \(N_0\)-regularity near
the origin. Hence
\begin{equation}\label{eq:center-restricted-df}
        d\tilde f_1^\varepsilon(0)=df_1^\varepsilon(0)\circ\imath,
        \qquad
        (d\tilde f_1^\varepsilon(0))^{-1}
        =(df_1^\varepsilon(0))^{-1}\circ\imath
\end{equation}
are bounded linear operators in the \(X^2\)-topology. The inverse function
theorem gives the claim.
\end{proof}

\begin{remark}\label{rem:center-two-topologies}
The local diffeomorphisms \(f_1^\varepsilon\) and \(\tilde f_1^\varepsilon\)
are used with different topologies. For fixed \(\varepsilon\),
\(f_1^\varepsilon\) is controlled in the \(X^1\)-topology, while
\(\tilde f_1^\varepsilon\) is controlled in the \(X^2\)-topology. This reflects
the mapping property
\begin{equation}\label{eq:center-vector-field-mapping}
        v_H^{\omega,\sigma}:N^{\omega,\sigma}\subset X^2\longrightarrow X^1.
\end{equation}
Thus the change of variables must be smooth on the \(X^2\)-domain, while the
transformed vector field is estimated in the \(X^1\)-target space.
\end{remark}

\subsection{The transformed Hamiltonian structure}

We now conjugate the Hamiltonian system by \(f_1^\varepsilon\). Define
\begin{equation}\label{eq:center-H1-def}
        H_1^\varepsilon(\varphi,\theta,z,\eta)
        :=
        H^{\omega_0+\varepsilon_1,\sigma_0+\varepsilon_2}
        (\varphi,\theta,z,\eta).
\end{equation}
By \eqref{second Hamiltonian},
\begin{equation}\label{eq:center-H1-formula}
\begin{aligned}
H_1^\varepsilon
&=
\int_0^1
\frac{
(\theta_y+(\omega_0+\varepsilon_1)(1-y)+\eta)^2-\varphi_y^2}
{2(1+\eta)}\,dy
\\
&\quad
+\int_0^1
(\omega_0+\varepsilon_1)\bigl(y(1+\eta)-1\bigr)
\bigl(\theta_y-1+(\omega_0+\varepsilon_1)(1-y)\bigr)\,dy
\\
&\quad
-\frac12(\omega_0+\varepsilon_1)
+\frac16(\omega_0+\varepsilon_1)^2
+\sigma_0+\varepsilon_2
-\sqrt{(\sigma_0+\varepsilon_2)^2-\tilde z^{\,2}} .
\end{aligned}
\end{equation}
The Hamiltonian in the new variables is
\begin{equation}\label{eq:center-Htilde-def}
        \tilde H^\varepsilon(\Phi,\theta,Z,\eta)
        :=
        H_1^\varepsilon
        \bigl((f_1^\varepsilon)^{-1}(\Phi,\theta,Z,\eta)\bigr).
\end{equation}
Using \eqref{eq:center-f-inverse}, one obtains
\begin{equation}\label{eq:center-Htilde-formula}
\begin{aligned}
\tilde H^\varepsilon
&=
\frac{1}{2(1+\eta)}
\int_0^1
\theta_y^2-\bigl(\Phi_y+Ry(\theta_y-1)\bigr)^2\,dy
+\frac{\eta^2}{2(1+\eta)}
\\
&\quad
+\frac{\omega_0+\varepsilon_1}{1+\eta}\int_0^1\theta\,dy
-(\omega_0+\varepsilon_1)(1+\eta)\int_0^1\theta\,dy
\\
&\quad
+\frac{(\omega_0+\varepsilon_1)^2}{6(1+\eta)}
+\frac{(\omega_0+\varepsilon_1)\eta}{2(1+\eta)}
+\frac16(\omega_0+\varepsilon_1)^2\eta
-\frac16(\omega_0+\varepsilon_1)^2
\\
&\quad
-\frac12(\omega_0+\varepsilon_1)\eta
+\sigma_0+\varepsilon_2
-\frac{\sigma_0+\varepsilon_2}{\sqrt{1+R^2}} .
\end{aligned}
\end{equation}
The symplectic form transforms by pullback:
\begin{equation}\label{eq:center-Omega-tilde}
\begin{aligned}
\tilde\Omega_m^\varepsilon[v^1,v^2]
:=
\Omega^{\omega_0+\varepsilon_1,\sigma_0+\varepsilon_2}_{(f_1^\varepsilon)^{-1}(m)}
\left[
(d f_1^\varepsilon)^{-1}_m[v^1],
(d f_1^\varepsilon)^{-1}_m[v^2]
\right].
\end{aligned}
\end{equation}

By Lemma~\ref{lem:center-boundary-linearization}, we introduce the fixed domain
\begin{equation}\label{eq:center-domain-L}
        \operatorname{dom}(\mathbf L)
        :=
        \left\{
        (\Phi,\theta,Z,\eta)\in N_0:
        \Phi_y(0)=\Phi_y(1)=0
        \right\},
\end{equation}
equipped with the \(X^2\)-norm.

\begin{proposition}[Transformed Hamiltonian vector field]
\label{prop:center-transformed-vector-field}
For \(m\in\operatorname{dom}(\mathbf L)\) sufficiently small in \(X^2\), the
Hamiltonian vector field associated with
\((\tilde H^\varepsilon,\tilde\Omega^\varepsilon)\) is
\begin{equation}\label{eq:center-new-vH}
        \widetilde{v_H^\varepsilon}(m)
        =
        d f_1^\varepsilon((f_1^\varepsilon)^{-1}(m))
        \left[
        v_H^{\omega_0+\varepsilon_1,\sigma_0+\varepsilon_2}
        ((f_1^\varepsilon)^{-1}(m))
        \right].
\end{equation}
Writing
\(\widetilde{v_H^\varepsilon}
=(\widetilde{\Phi_H^\varepsilon},
\widetilde{\theta_H^\varepsilon},
\widetilde{Z_H^\varepsilon},
\widetilde{\eta_H^\varepsilon})\), one has
\begin{equation}\label{eq:center-explicit-vH}
\begin{aligned}
\widetilde{\Phi_H^\varepsilon}(m)
&=
\frac{1}{1+\eta}
\Bigl[
\theta_y
+R\bigl(2y\Phi_y+Ry^2(\theta_y-1)-\Phi-\Phi(1)+Z\bigr)
\Bigr]
\\
&\quad
-\frac{\bar Z(1+R^2)^{3/2}}{\sigma_0+\varepsilon_2}
\left(
\theta_{\mathrm{corr}}-\frac12\left(y^2-\frac13\right)
\right)
+\frac{(\omega_0+\varepsilon_1)R^2y^2(4y-3)}
{6(1+\eta)}
\\
&\quad
+(\omega_0+\varepsilon_1)(1+\eta)\left(y-\frac12\right)
-\frac{\omega_0+\varepsilon_1}{1+\eta}\left(y-\frac12\right),
\\
\widetilde{\theta_H^\varepsilon}(m)
&=
-\frac{1}{1+\eta}
\left(\Phi_y-(\omega_0+\varepsilon_1)R(1-y)y\right),
\\
\widetilde{Z_H^\varepsilon}(m)
&=
-\frac{1}{1+\eta}
\Bigl(\theta_y(1)+R\bigl(R(\theta_y(1)-1)+Z\bigr)\Bigr)
-\frac{(\omega_0+\varepsilon_1)(1+\eta)}{2}
+\frac{\omega_0+\varepsilon_1}{2(1+\eta)},
\\
\widetilde{\eta_H^\varepsilon}(m)
&=
R,
\end{aligned}
\end{equation}
where
\begin{equation}\label{eq:center-Zbar-def}
        \bar Z
        =
        \frac{(1+R^2)(\theta_y(1)-1)^2}{2(1+\eta)^2}
        -\frac12 .
\end{equation}
Consequently, near the origin, a curve
\(\gamma\in C^1((a,b),N^{\omega,\sigma})\) solves \eqref{old Hamiltons eq} with
\(\gamma((a,b))\subset\operatorname{dom}(v_H^{\omega,\sigma})\) if and only if
\begin{equation}\label{eq:center-v-def}
        v:=\tilde f_1^\varepsilon\circ\gamma
        \in C^1((a,b),\operatorname{dom}(\mathbf L))
\end{equation}
solves
\begin{equation}\label{eq:center-transformed-system}
        \dot v(x)=\widetilde{v_H^\varepsilon}(v(x)).
\end{equation}
\end{proposition}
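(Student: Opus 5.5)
The proof has three parts: the abstract conjugation identity, the explicit formulas, and the equivalence of the two dynamical systems.

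\textbf{Step 1: the conjugation identity.} Set $g:=f_1^\varepsilon$ and $n:=g^{-1}(m)$. By definition $\tilde H^\varepsilon=H_1^\varepsilon\circ g^{-1}$, and $\tilde\Omega^\varepsilon$ is the pullback \eqref{eq:center-Omega-tilde}. For any tangent vector $w$ write $w'=(dg_n)^{-1}w$. Then
\[
d\tilde H^\varepsilon_m[w]=dH_1^\varepsilon(n)[w'] =\Omega_n[v_H(n),w']=\tilde\Omega^\varepsilon_m[dg_n v_H(n),w].
\]
Here we use that $(dg_n)^{-1}$ is an isomorphism of $M_0$ (Proposition~\ref{prop:center-boundary-diffeomorphism}). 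Weak nondegeneracy of $\tilde\Omega^\varepsilon$, inherited from that of $\Omega$, then gives \eqref{eq:center-new-vH}.

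This identity needs $n\in\operatorname{dom}(v_H)$. For that I will use Proposition~\ref{prop:center-restricted-diffeomorphism}: when $m\in\operatorname{dom}(\mathbf L)$ is small in $X^2$, the point $n$ lies in $N^{\omega,\sigma}$. Proposition~\ref{lem:center-boundary-linearization} works in both directions, since $\Phi_y=\varphi_y-Wy(\theta_y-1)$ is an identity. So $\Phi_y(0)=\Phi_y(1)=0$ holds exactly when \eqref{eq:center-new-BC} holds. Hence $n\in\operatorname{dom}(v_H)$, and $v_H(n)\in M_0$.

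\textbf{Step 2: the explicit formulas \eqref{eq:center-explicit-vH}.} I will compute $dg_n[v_H(n)]$ component by component, using Lemma~\ref{lem:center-RW} to replace $W$ by $R$.

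The $\theta$ and $\eta$ components pass through $g$ unchanged. For them I substitute $\tilde z/\sqrt{\sigma^2-\tilde z^2}=R$ and $\varphi_y=\Phi_y+Ry(\theta_y-1)$ into $\theta_H$ and $\eta_H$. In $\theta_H$ the terms $Ry(\theta_y-1)$ cancel, leaving $-\frac1{1+\eta}\bigl(\Phi_y-\omega R y(1-y)\bigr)$.

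For $Z=-\varphi(1)$, the new component is $-\varphi_H(1)$. Evaluating the $\varphi_H$ formula at $y=1$ and rewriting it in $(\Phi,R)$ gives the stated expression.

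For $\Phi$, the new component is
\[
\varphi_H-\dot W\Bigl(\theta_{\rm corr}-\tfrac12\bigl(y^2-\tfrac13\bigr)\Bigr)-W\,\theta_{{\rm corr},H}.
\]
Three ingredients go into this:
\begin{itemize}
\item $\dot W=\frac{\sigma^2}{(\sigma^2-\tilde z^2)^{3/2}}\,\dot{\tilde z}=\frac{(1+R^2)^{3/2}}{\sigma}\,\dot{\tilde z}$;
\item $\dot{\tilde z}$ is computed along the flow, as in the Bernoulli derivation in the proof of Theorem~\ref{thm:Hamiltonian_equivalence};
\item $\theta_{{\rm corr},H}$ comes from integrating $s\,\theta_{H,s}$ by parts.
\end{itemize}
I expect $\dot{\tilde z}$ to collapse to $-\bar Z$, a boundary quantity at $y=1$. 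The interior integrals should cancel against $z_H$, exactly as the $K_x$ identity cancelled them there. Then the remaining terms, including $\frac{\omega R^2y^2(4y-3)}{6(1+\eta)}$, are collected.

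This simplification of $\dot{\tilde z}$ is the main obstacle. My first approach is to compute $\frac{d}{dx}\int_0^1 y\varphi_y(\theta_y+\omega(1-y)-1)\,dy$ by differentiating under the integral and integrating by parts in $y$, using the boundary conditions \eqref{eq:center-new-BC} together with $\theta(0)=\theta(1)=0$. The result must then be compared with $z_H$ term by term.

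\textbf{Step 3: equivalence of the curves.} By Proposition~\ref{prop:center-restricted-diffeomorphism}, $\tilde f_1^\varepsilon$ is a $C^1$ diffeomorphism in the $X^2$ topology. So $\gamma\in C^1((a,b),N)$ if and only if $v\in C^1((a,b),N_0)$. Step 1 shows that $\gamma(x)\in\operatorname{dom}(v_H)$ if and only if $v(x)\in\operatorname{dom}(\mathbf L)$. The chain rule gives $\dot v=dg_{\gamma}[\dot\gamma]$. Since $dg$ is invertible, $\dot\gamma=v_H(\gamma)$ holds if and only if $\dot v=\widetilde{v_H^\varepsilon}(v)$.
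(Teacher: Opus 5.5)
Your proposal follows the paper's proof. The conjugation identity \eqref{eq:center-new-vH} comes from the same chain-rule/pullback argument. The explicit formulas come from direct substitution using $R\circ f=W$; the paper only asserts this step, citing the analogous computation of Buffoni--Groves--Toland. The equivalence of curves follows from the diffeomorphism property. Your check that $(f_1^\varepsilon)^{-1}(m)\in\operatorname{dom}(v_H^{\omega,\sigma})$, via the exact identity $\Phi_y=\varphi_y-Wy(\theta_y-1)$, makes explicit a step the paper leaves implicit.

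One correction: your expectation that $\dot{\tilde z}$ collapses to $-\bar Z$ has the wrong sign. Carry out your own computation of the derivative of $\int_0^1 y\varphi_y(\theta_y+\omega(1-y)-1)\,dy$ along $v_H^{\omega,\sigma}$. The $W$-terms form the exact derivative $\frac{W}{1+\eta}\int_0^1\partial_y\bigl(y\varphi_y\cdot y(\theta_y+\omega(1-y)-1)\bigr)\,dy$, and the interior integrals cancel against $z_H^{\omega,\sigma}$. This gives $d\tilde z[v_H^{\omega,\sigma}]=+\bar Z$. That is exactly the sign needed in your formula $\varphi_H-\dot W(\cdots)-W\theta_{\mathrm{corr},H}$ to produce the stated term $-\bar Z(1+R^2)^{3/2}(\sigma_0+\varepsilon_2)^{-1}\bigl(\theta_{\mathrm{corr}}-\frac12(y^2-\frac13)\bigr)$.
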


\begin{proof}
For \(v_m'\in M_0\), the chain rule and the definition of
\(\tilde\Omega^\varepsilon\) give
\begin{equation}\label{eq:center-chain-rule-vH}
\begin{aligned}
d\tilde H_m^\varepsilon[v_m']
&=
dH^{\omega_0+\varepsilon_1,\sigma_0+\varepsilon_2}_{(f_1^\varepsilon)^{-1}(m)}
\bigl[(df_1^\varepsilon)^{-1}_m[v_m']\bigr]
\\
&=
\tilde\Omega_m^\varepsilon
\left[
d f_1^\varepsilon((f_1^\varepsilon)^{-1}(m))
v_H^{\omega_0+\varepsilon_1,\sigma_0+\varepsilon_2}
((f_1^\varepsilon)^{-1}(m)),
v_m'
\right].
\end{aligned}
\end{equation}
This proves \eqref{eq:center-new-vH}. Formula
\eqref{eq:center-explicit-vH} follows by direct substitution using
\eqref{eq:center-f-inverse}; the computation is analogous to
\cite[Appendix~A]{buffoni1996plethora}. The equivalence of
\eqref{old Hamiltons eq} and \eqref{eq:center-transformed-system} follows from
\eqref{eq:center-new-vH} and the diffeomorphism property of
\(\tilde f_1^\varepsilon\).
\end{proof}

In particular, it remains to analyze the transformed system
\eqref{eq:center-transformed-system} on the fixed domain
\(\operatorname{dom}(\mathbf L)\).

\subsection{The linearized operator \(\mathbf L\)}

The symplectic form \(\tilde\Omega^\varepsilon\) and the vector field
\(\widetilde{v_H^\varepsilon}\) constructed above depend smoothly on
\(\varepsilon=(\varepsilon_1,\varepsilon_2)\). Thus, near \((0,0)\), we regard
\begin{equation}\label{eq:center-vH-smooth-map}
        \widetilde{v_H}:
        \operatorname{dom}(\mathbf L)\times\mathbb R^2\longrightarrow M_0,
        \qquad
        (v,\varepsilon)\longmapsto \widetilde{v_H^\varepsilon}(v),
\end{equation}
as a smooth map. Its first-order expansion in the \(v\)-variable gives
\begin{equation}\label{eq:center-v-L-plus-B}
        \dot v=\mathbf L v+\mathbf B(v,\varepsilon),
        \qquad
        \mathbf L:=d_1\widetilde{v_H}(0,0),
        \qquad
        \mathbf B(v,\varepsilon):=\widetilde{v_H}(v,\varepsilon)-\mathbf L v .
\end{equation}
The remainder \(\mathbf B\) is smooth near \((0,0)\), and
\begin{equation}\label{eq:center-B-remainder}
        \mathbf B(v,0)=O(\|v\|_{X^2}^2)
        \quad\text{as }v\to0,
        \qquad
        d_1\mathbf B(0,0)=0 .
\end{equation}

A direct calculation from \eqref{eq:center-explicit-vH} gives, for
\(v=(\Phi,\theta,Z,\eta)\),
\begin{equation}\label{eq:center-L-formula}
\mathbf Lv
=
\left(
\begin{aligned}
&
\theta_y
+\frac{1}{2\sigma_0}\bigl(\theta_y(1)+\eta\bigr)
\left(\frac13-y^2\right)
+\omega_0\eta(2y-1),
\\
&
-\Phi_y+3\omega_0(\Phi(1)+Z)(1-y)y,
\\
&
-\theta_y(1)-\omega_0\eta,
\\
&
3(\Phi(1)+Z)
\end{aligned}
\right).
\end{equation}
We view \(\mathbf L\) as an unbounded operator
\begin{equation}\label{eq:center-L-unbounded}
        \mathbf L:\operatorname{dom}(\mathbf L)\subset M_0\longrightarrow M_0,
\end{equation}
where \(\operatorname{dom}(\mathbf L)\) is defined in \eqref{eq:center-domain-L}.

\begin{lemma}\label{lembfL}
The operator \(\mathbf L:\operatorname{dom}(\mathbf L)\subset M_0\to M_0\) is
densely defined and closed. Moreover, its graph norm is equivalent to the
\(X^2\)-norm on \(\operatorname{dom}(\mathbf L)\): there exists \(C\ge1\) such
that
\begin{equation}\label{eq:center-graph-norm-equivalence}
        C^{-1}\|v\|_{X^2}
        \le
        \|v\|_{G(\mathbf L)}
        :=
        \|v\|_{X^1}+\|\mathbf Lv\|_{X^1}
        \le
        C\|v\|_{X^2},
        \qquad
        v\in\operatorname{dom}(\mathbf L).
\end{equation}
\end{lemma}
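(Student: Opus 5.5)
The plan is to prove the two-sided estimate \eqref{eq:center-graph-norm-equivalence} first, and then read off density and closedness from it. Throughout, \(v=(\Phi,\theta,Z,\eta)\in\operatorname{dom}(\mathbf L)\) and \(\mathbf L v=(w_1,w_2,w_3,w_4)\) as in \eqref{eq:center-L-formula}.

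\emph{Upper bound.} The terms in \(\mathbf Lv\) involve only \(\theta_y\), \(\Phi_y\), the traces \(\theta_y(1)\) and \(\Phi(1)\), the scalars \(Z,\eta\), and fixed polynomials in \(y\). The trace bound \eqref{traceb} applied to \(\theta_y\in H^1\) gives \(|\theta_y(1)|\le C\|\theta\|_{H^2}\). It also gives \(|\Phi(1)|\le C\|\Phi\|_{H^1}\). Hence \(\|\mathbf Lv\|_{X^1}\le C\|v\|_{X^2}\), and the bound \(\|v\|_{G(\mathbf L)}\le C\|v\|_{X^2}\) follows.

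\emph{Lower bound.} This is the substantive direction, but it is still elementary because \(\mathbf L\) has a triangular structure.
\begin{enumerate}
\item From \(w_1=\theta_y+p(y)\) we get \(\|\theta_y\|_{H^1}\le \|w_1\|_{H^1}+C(|\theta_y(1)|+|\eta|)\). Here \(p\) is a polynomial whose coefficients are controlled by \(|\theta_y(1)|+|\eta|\).
\item The trace \(\theta_y(1)\) is controlled through \(w_3=-\theta_y(1)-\omega_0\eta\), which gives \(|\theta_y(1)|\le|w_3|+C|\eta|\le C\|v\|_{G(\mathbf L)}\).
\item Since \(\theta(0)=0\), Poincaré's inequality gives \(\|\theta\|_{L^2}\le C\|\theta_y\|_{L^2}\). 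Combining the three steps, \(\|\theta\|_{H^2}\le C\|v\|_{G(\mathbf L)}\).
\item Similarly, \(w_4=3(\Phi(1)+Z)\) controls the combination \(\Phi(1)+Z\).
\item Then \(\Phi_y=-w_2+3\omega_0(\Phi(1)+Z)(1-y)y\) gives \(\|\Phi_y\|_{H^1}\le C(\|w_2\|_{H^1}+|w_4|)\).
\item Adding \(\|\Phi\|_{L^2}\le\|v\|_{X^1}\), we obtain \(\|\Phi\|_{H^2}\le C\|v\|_{G(\mathbf L)}\).
\end{enumerate}
The scalars \(Z\) and \(\eta\) are already bounded by \(\|v\|_{X^1}\). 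This proves \eqref{eq:center-graph-norm-equivalence}. The only point needing care is the order of the steps: \(\theta_y(1)\) must be controlled via \(w_3\) before \(\theta_y\) is estimated from \(w_1\).

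\emph{Closedness.} Let \(v_n\in\operatorname{dom}(\mathbf L)\) with \(v_n\to v\) and \(\mathbf Lv_n\to w\) in \(X^1\). By the lower bound, \((v_n)\) is Cauchy in \(X^2\), so \(v_n\to v\) in \(X^2\). The conditions defining \(\operatorname{dom}(\mathbf L)\) are continuous in \(X^2\); these are the zero mean of \(\Phi\), \(\theta(0)=\theta(1)=0\), and \(\Phi_y(0)=\Phi_y(1)=0\). Therefore \(v\in\operatorname{dom}(\mathbf L)\). By the upper bound, \(\mathbf Lv_n\to\mathbf Lv\) in \(X^1\), so \(w=\mathbf Lv\).

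\emph{Density.} It suffices to approximate in \(H^1\), separately for each component.
\begin{itemize}
\item Given \(\theta\in H^1\) with \(\theta(0)=\theta(1)=0\), the function \(\theta\) is the \(H^1\)-limit of \(C_c^\infty(0,1)\) functions, and these lie in \(H^2\) with vanishing boundary values.
\item Given \(\Phi\in H^1\) with mean zero, approximate it in \(H^1\) by smooth \(\Phi_n\). Then correct the boundary derivatives by subtracting \(\Phi_{n,y}(0)\chi_n(y)+\Phi_{n,y}(1)\chi_n(1-y)\). Here \(\chi_n(y)=y\rho(ny)\), with \(\rho\) a cutoff equal to \(1\) near \(0\). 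This correction has \(H^1\)-norm \(O(n^{-1/2})\) times the coefficients.
\item The coefficients \(\Phi_{n,y}(0),\Phi_{n,y}(1)\) may grow with \(n\). To handle this, choose the smooth approximants with \(\Phi_{n,y}\) vanishing near the endpoints in the first place. This is possible by first approximating \(\Phi_y\) in \(L^2\) by \(C_c^\infty\) functions and then integrating.
\item Finally, subtract a constant to restore the zero mean.
\end{itemize}
The last bullet is the simplest route: approximate \(\Phi_y\in L^2\) by \(g_n\in C_c^\infty(0,1)\), set \(\Phi_n=\Phi(0)+\int_0^y g_n\), and adjust the constant so that \(\Phi_n\) has mean zero. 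Each \(\Phi_n\) lies in \(H^2\), satisfies \(\Phi_{n,y}(0)=\Phi_{n,y}(1)=0\), and \(\Phi_n\to\Phi\) in \(H^1\). Together with the \(\theta\) approximants and the unchanged scalars \(Z,\eta\), this shows \(\operatorname{dom}(\mathbf L)\) is dense in \(M_0\).
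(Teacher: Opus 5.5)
Your proof is correct, but it runs in the opposite order from the paper's.

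\textbf{The paper's route.} It proves closedness first, by a sequence argument: it reads off from \eqref{eq:center-L-formula} that $(\Phi_n)_y$ and $(\theta_n)_y$ converge in $H^1$, identifies the limit distributionally, and passes the boundary conditions to the limit. It gets the upper bound from the trace estimate. It then obtains the lower bound $\|v\|_{X^2}\le C\|v\|_{G(\mathbf L)}$ abstractly from the bounded inverse theorem, using that $\operatorname{dom}(\mathbf L)$ is complete under both the graph norm and the $X^2$-norm. Density is only asserted as standard.

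\textbf{Your route.} You prove the lower bound directly from the triangular structure of $\mathbf L$: the third component controls $\theta_y(1)$, and the fourth controls $\Phi(1)+Z$. Closedness then follows immediately, and you give an explicit density construction.

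\textbf{Comparison.} Your argument is more elementary and quantitative. It gives an explicit constant depending only on $\omega_0,\sigma_0$ and avoids the open-mapping theorem. It also states outright the estimate that the paper's closedness step uses tacitly. In the paper, the $H^1$-convergence of $(\theta_n)_y$ requires convergence of $\theta_{n,y}(1)$, and this comes from the third component of $\mathbf L v_n$, exactly as in your step 2. So the paper makes essentially the same structural observation and then still routes the lower bound through an abstract theorem. That is shorter on the page but less informative.

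\textbf{Minor remarks.} Poincar\'e's inequality in your step 3 is unnecessary, since $\|\theta\|_{L^2}\le\|v\|_{X^1}$ already. The intermediate cutoff-correction attempt in your density argument can be dropped in favor of your final construction: approximate $\Phi_y$ in $L^2$ by $g_n\in C_c^\infty(0,1)$, integrate, and subtract the constant that restores zero mean. That construction alone suffices, since $\Phi_n\to\Phi$ in $H^1$ and $\Phi_{n,y}$ vanishes near both endpoints.
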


\begin{proof}
The density of \(\operatorname{dom}(\mathbf L)\) in \(M_0\) follows by the
standard density of smooth functions satisfying the defining linear constraints.

We prove closedness. Let
\(v_n=(\Phi_n,\theta_n,Z_n,\eta_n)\in\operatorname{dom}(\mathbf L)\) satisfy
\begin{equation}\label{eq:center-closedness-assumption}
        v_n\to v=(\Phi,\theta,Z,\eta),
        \qquad
        \mathbf Lv_n\to w
        \qquad
        \text{in }X^1 .
\end{equation}
From \eqref{eq:center-L-formula}, the convergence of \(\mathbf Lv_n\) in
\(X^1\) implies that \((\Phi_n)_y\) and \((\theta_n)_y\) converge in \(H^1(0,1)\)
to some \(\Phi_*\) and \(\theta_*\), respectively. For
\(\psi\in C_c^\infty(0,1)\),
\begin{equation}\label{eq:center-closedness-distribution}
\begin{aligned}
\int_0^1\Phi\,\psi_{yy}\,dy
&=
\lim_{n\to\infty}\int_0^1\Phi_n\,\psi_{yy}\,dy
=
\lim_{n\to\infty}\int_0^1(\Phi_n)_{yy}\,\psi\,dy =
\int_0^1(\Phi_*)_y\,\psi\,dy .
\end{aligned}
\end{equation}
Hence \(\Phi_{yy}=(\Phi_*)_y\in L^2(0,1)\), so \(\Phi\in H^2(0,1)\). Since
\(\Phi_n\to\Phi\) in \(H^1\) and \((\Phi_n)_y\to\Phi_*\) in \(H^1\), we have $\Phi_n\to\Phi$ in $H^2(0,1)$. The trace theorem and the conditions \((\Phi_n)_y(0)=(\Phi_n)_y(1)=0\) imply
\begin{equation}\label{eq:center-Phi-boundary-limit}
        \Phi_y(0)=\Phi_y(1)=0 .
\end{equation}
The same argument gives
\begin{equation}\label{eq:center-theta-H2-convergence}
        \theta\in H^2(0,1),
        \qquad
        \theta_n\to\theta
        \quad\text{in }H^2(0,1).
\end{equation}
Thus \(v\in\operatorname{dom}(\mathbf L)\). Passing to the limit in
\eqref{eq:center-L-formula} gives \(w=\mathbf Lv\), so \(\mathbf L\) is closed.

We now prove \eqref{eq:center-graph-norm-equivalence}. The upper bound follows
directly from \eqref{eq:center-L-formula} and the trace estimate
\eqref{traceb}: there is \(C_*\ge1\) such that
\begin{equation}\label{eq:center-graph-upper-bound}
        \|v\|_{G(\mathbf L)}\le C_*\|v\|_{X^2},
        \qquad
        v\in\operatorname{dom}(\mathbf L).
\end{equation}
For the reverse bound, consider the identity map
\begin{equation}\label{eq:center-identity-map-graph}
        I:
        \bigl(\operatorname{dom}(\mathbf L),\|\cdot\|_{X^2}\bigr)
        \longrightarrow
        \bigl(\operatorname{dom}(\mathbf L),\|\cdot\|_{G(\mathbf L)}\bigr).
\end{equation}
By \eqref{eq:center-graph-upper-bound}, \(I\) is continuous. Since \(\mathbf L\)
is closed, \(\operatorname{dom}(\mathbf L)\) equipped with the graph norm is a
Banach space; equipped with the \(X^2\)-norm it is also Banach. The bounded
inverse theorem \cite[\S~12.5]{Conway1996} implies that \(I^{-1}\) is
continuous. Therefore
\begin{equation}\label{eq:center-graph-lower-bound}
        \|v\|_{X^2}\le C\|v\|_{G(\mathbf L)},
        \qquad
        v\in\operatorname{dom}(\mathbf L),
\end{equation}
which completes the proof.
\end{proof}

By Lemma~\ref{lembfL}, the \(X^2\)-regularity on
\(\operatorname{dom}(\mathbf L)\) is equivalent to graph-norm regularity. Let
\begin{equation}\label{eq:center-jmath-def}
        \jmath:\operatorname{dom}(\mathbf L)\hookrightarrow M_0
\end{equation}
denote the inclusion, with \(\operatorname{dom}(\mathbf L)\) equipped with the
\(X^2\)-norm. Define
\begin{equation}\label{eq:center-Hhat-def}
        \hat H^\varepsilon:=\tilde H^\varepsilon\circ\jmath .
\end{equation}
Then \(\hat H^\varepsilon\) is smooth on \(\operatorname{dom}(\mathbf L)\).

\begin{lemma}\label{lemequ}
For every \(u^1,u^2\in\operatorname{dom}(\mathbf L)\),
\begin{equation}\label{eq:center-Hessian-symplectic-identity}
        d^2\hat H^0_0[u^1,u^2]
        =
        \tilde\Omega^0_0[\mathbf L u^1,\jmath(u^2)] .
\end{equation}
\end{lemma}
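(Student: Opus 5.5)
The natural route is to differentiate the Hamiltonian identity twice at the equilibrium. By Proposition~\ref{prop:center-transformed-vector-field}, for every small \(m\in\operatorname{dom}(\mathbf L)\) and every \(w\in M_0\),
\begin{equation*}
d\tilde H^0_m[w]=\tilde\Omega^0_m\bigl[\widetilde{v_H^0}(m),w\bigr].
\end{equation*}
We restrict to \(w=\jmath(u^2)\) with \(u^2\in\operatorname{dom}(\mathbf L)\), so the left side is \(d\hat H^0_m[u^2]\). We then view both sides as smooth functions of \(m\in\operatorname{dom}(\mathbf L)\), equipped with the \(X^2\)-norm, and differentiate at \(m=0\) in the direction \(u^1\).

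On the left this gives \(d^2\hat H^0_0[u^1,u^2]\). On the right the product rule gives two terms. The first is
\begin{equation*}
(d\tilde\Omega^0)_0[u^1]\bigl[\widetilde{v_H^0}(0),\jmath u^2\bigr].
\end{equation*}
The second is \(\tilde\Omega^0_0\bigl[d_1\widetilde{v_H}(0,0)u^1,\jmath u^2\bigr]\), which by definition \eqref{eq:center-v-L-plus-B} equals \(\tilde\Omega^0_0[\mathbf Lu^1,\jmath u^2]\). The identity therefore reduces to showing the first term vanishes, i.e.\ \(\widetilde{v_H^0}(0)=0\).

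To see that \(0\) is an equilibrium, set \(\Phi=\theta=Z=\eta=0\). Then \(R=0\) from \eqref{eq:center-R-Zp-def}, and \(\bar Z=\tfrac12-\tfrac12=0\) by \eqref{eq:center-Zbar-def}. Substituting into \eqref{eq:center-explicit-vH} gives \(\widetilde{\Phi_H^0}(0)=\omega_0(y-\tfrac12)-\omega_0(y-\tfrac12)=0\), \(\widetilde{\theta_H^0}(0)=0\), \(\widetilde{Z_H^0}(0)=-\tfrac{\omega_0}{2}+\tfrac{\omega_0}{2}=0\), and \(\widetilde{\eta_H^0}(0)=0\). Alternatively, \(f(0)=0\) and \(v_H(0)=0\) by \eqref{eq:final_Hamiltonian_eqns}.

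The main obstacle is justifying the differentiation. We need the map
\begin{equation*}
m\mapsto\tilde\Omega^0_m\bigl[\widetilde{v_H^0}(m),\jmath u^2\bigr]
\end{equation*}
to be \(C^1\) from \(\operatorname{dom}(\mathbf L)\) with the \(X^2\)-norm. Here \(\widetilde{v_H^0}:\operatorname{dom}(\mathbf L)\to M_0\) is smooth, with \(M_0\) carrying the \(X^1\)-norm (cf.\ Remark~\ref{rem:center-two-topologies} and \eqref{eq:center-vH-smooth-map}). The form \(\tilde\Omega^0\) is the pullback of the constant form \eqref{symplectic form} under \((f_1^0)^{-1}\), which is smooth on \(M_0\) by Proposition~\ref{prop:center-boundary-diffeomorphism}. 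Hence \(m\mapsto\tilde\Omega^0_m\) is smooth into bounded bilinear forms on \(X^1\), and its composition with the inclusion \(\jmath\) is smooth on \(X^2\). The bilinear pairing of smooth maps is then smooth, so the chain rule applies.

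One also needs the Hamiltonian identity for all small \(m\in\operatorname{dom}(\mathbf L)\), not merely at \(m=0\). This holds because \((f_1^0)^{-1}\) maps \(\operatorname{dom}(\mathbf L)\) into \(\operatorname{dom}(v_H)\), by Propositions~\ref{lem:center-boundary-linearization} and \ref{prop:center-restricted-diffeomorphism}. Finally, \(d^2\hat H^0_0\) agrees with \(d^2\tilde H^0_0\) restricted to \(\operatorname{dom}(\mathbf L)\), since \(\jmath\) is a bounded linear map.
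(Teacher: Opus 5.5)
Your proposal is correct and follows the paper's own argument. In both, you differentiate the identity $d\tilde H^0_m[w]=\tilde\Omega^0_m[\widetilde{v_H^0}(m),w]$ at $m=0$ in the direction $u^1$. The derivative of the base point in $\tilde\Omega^0_m$ drops out because $\widetilde{v_H^0}(0)=0$, and you pass to $\hat H^0=\tilde H^0\circ\jmath$ by taking $w=\jmath(u^2)$. Your explicit check of the equilibrium and your remarks on $X^2$-to-$X^1$ regularity spell out what the paper's proof uses tacitly.
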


\begin{proof}
By definition of the transformed Hamiltonian vector field, for \(m\) near \(0\)
and \(v'\in M_0\),
\begin{equation}\label{eq:center-Hamiltonian-field-identity}
        d\tilde H^\varepsilon_m[v']
        =
        \tilde\Omega^\varepsilon_m
        [\widetilde{v_H^\varepsilon}(m),v'] .
\end{equation}
Set \(\varepsilon=0\) and differentiate \eqref{eq:center-Hamiltonian-field-identity}
at \(m=0\) in the direction \(u^1\in\operatorname{dom}(\mathbf L)\). Since
\(\widetilde{v_H^0}(0)=0\), the derivative of the base point in
\(\tilde\Omega^0_m\) does not contribute, and
\begin{equation}\label{eq:center-d2Htilde-L}
        d^2\tilde H^0_0[u^1,v']
        =
        \tilde\Omega^0_0[\mathbf L u^1,v'],
        \qquad
        v'\in M_0.
\end{equation}
Since \(\hat H^0=\tilde H^0\circ\jmath\), we also have
\begin{equation}\label{eq:center-Hhat-Htilde-Hessian}
        d^2\hat H^0_0[u^1,u^2]
        =
        d^2\tilde H^0_0[u^1,\jmath(u^2)] .
\end{equation}
Taking \(v'=\jmath(u^2)\) in \eqref{eq:center-d2Htilde-L} and using
\eqref{eq:center-Hhat-Htilde-Hessian} gives
\eqref{eq:center-Hessian-symplectic-identity}.
\end{proof}

We finally record the symmetry inherited by the transformed system.

\begin{proposition}[Reversibility after the change of variables]
\label{prop:center-reversibility}
The involution \(S(\Phi,\theta,Z,\eta)=(-\Phi,\theta,-Z,\eta)\) preserves
\(\operatorname{dom}(\mathbf L)\), and the transformed Hamiltonian system is
reversible with respect to \(S\). More precisely,
\begin{align}
        \tilde H^\varepsilon(Sm)
        &=
        \tilde H^\varepsilon(m), \label{eq:center-symH2}\\
        \tilde\Omega^\varepsilon_m[v^1,v^2]
        &=
        -\tilde\Omega^\varepsilon_{Sm}[Sv^1,Sv^2],
        \qquad v^1,v^2\in M_0, \label{eq:center-symOmega2}\\
        \widetilde{v_H^\varepsilon}(Sm)
        &=
        -S\widetilde{v_H^\varepsilon}(m).
        \label{eq:center-symvH2}
\end{align}
\end{proposition}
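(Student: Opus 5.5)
The plan is to show that the boundary-linearizing map commutes with the involution, \(f\circ(S\times\mathrm{id})=(S\times\mathrm{id})\circ f\), and then to transport the reversibility already proved in Proposition~\ref{prop:reversibility}. The claim that \(S\) preserves \(\operatorname{dom}(\mathbf L)\) is immediate, since \(\Phi\mapsto-\Phi\) preserves both \(\Phi\in H^2\) with zero mean and the conditions \(\Phi_y(0)=\Phi_y(1)=0\).

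For the commutation, let \(m=(\varphi,\theta,z,\eta)\) and \(Sm=(-\varphi,\theta,-z,\eta)\). By \eqref{eq:center-ztilde-eps}, \(\tilde z\) is odd under \(S\): the integral term is linear in \(\varphi_y\) and otherwise depends only on \(\theta,\eta\). Hence \(W\) in \eqref{eq:center-W-theta-p-def} is odd, while \(\theta_{\mathrm{corr}}\) depends only on \(\theta\) and is unchanged. It follows from \eqref{eq:center-Phi-Z-def} that \(\Phi\) and \(Z=-\varphi(1)\) both change sign, and \(\theta,\eta,\varepsilon\) are untouched. Thus \(f_1^\varepsilon\circ S=S\circ f_1^\varepsilon\) on \(U_1\). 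Because \(S\) is linear, \(df_1^\varepsilon(Sm)\,S=S\,df_1^\varepsilon(m)\), and the same relations hold for the inverse. We also need the neighborhoods \(U_1,U_2\) to be \(S\)-invariant; if necessary we shrink them to \(U\cap SU\), which is still a neighborhood of \(0\) because \(S\) is an isometry fixing \(0\). Consistently with Lemma~\ref{lem:center-RW}, \(R\) is odd in \((\Phi,Z)\).

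Next, \eqref{eq:center-symH2} follows from \eqref{eq:center-Htilde-def}:
\[
\tilde H^\varepsilon(Sm)=H_1^\varepsilon(S(f_1^\varepsilon)^{-1}m)=H_1^\varepsilon((f_1^\varepsilon)^{-1}m)=\tilde H^\varepsilon(m),
\]
using \eqref{eq:symH}. As a check, \eqref{eq:center-Htilde-formula} depends on \(\Phi,Z\) only through \(\Phi_y+Ry(\theta_y-1)\) squared and through \(R^2\), both of which are even. For \eqref{eq:center-symOmega2}, insert the commutation relations into \eqref{eq:center-Omega-tilde}:
\[
\tilde\Omega^\varepsilon_{Sm}[Sv^1,Sv^2]=\Omega_{S(f_1^\varepsilon)^{-1}m}\bigl[S(df_1^\varepsilon)^{-1}_m v^1,S(df_1^\varepsilon)^{-1}_m v^2\bigr],
\]
and this equals \(-\tilde\Omega^\varepsilon_m[v^1,v^2]\) by \eqref{eq:symOmega}.

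Finally, \eqref{eq:center-symvH2} is obtained in either of two ways. One is to repeat verbatim the weak-nondegeneracy argument of Proposition~\ref{prop:reversibility}, noting that \(\tilde\Omega^\varepsilon\) is weakly nondegenerate as the pullback of a weakly nondegenerate form by an isomorphism. The more direct way is to use \eqref{eq:center-new-vH} together with \eqref{eq:symvH}:
\[
\widetilde{v_H^\varepsilon}(Sm)=df_1^\varepsilon(S\,\cdot)\bigl[v_H(S(f_1^\varepsilon)^{-1}m)\bigr]=S\,df_1^\varepsilon\bigl[-S\,S\,v_H((f_1^\varepsilon)^{-1}m)\bigr]=-S\,\widetilde{v_H^\varepsilon}(m).
\]

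The only step requiring care is the oddness of \(\tilde z\), and hence of \(W\) and \(R\). This rests on the observation that \(\varphi\) enters \(\tilde z\) linearly, and this must be checked against the precise form of \eqref{eq:center-ztilde-eps}. Everything else is formal transport of structure.
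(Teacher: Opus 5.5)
Your proposal is correct and follows essentially the same route as the paper. You establish \(f_1^\varepsilon\circ S=S\circ f_1^\varepsilon\), together with the corresponding relations for the inverse and the differentials, and then transport \eqref{eq:symH}, \eqref{eq:symOmega}, and \eqref{eq:symvH} through \eqref{eq:center-Htilde-def}, \eqref{eq:center-Omega-tilde}, and \eqref{eq:center-new-vH}; your explicit check that \(\tilde z\), \(W\), and \(R\) are odd, and your shrinking to \(S\)-invariant neighborhoods, supply details the paper leaves implicit.
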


\begin{proof}
The formulas defining \(f_1^\varepsilon\) and \((f_1^\varepsilon)^{-1}\) give
\begin{equation}\label{eq:center-f-commutes-S}
        f_1^\varepsilon\circ S=S\circ f_1^\varepsilon,
        \qquad
        (f_1^\varepsilon)^{-1}\circ S
        =
        S\circ(f_1^\varepsilon)^{-1}.
\end{equation}
Differentiating \eqref{eq:center-f-commutes-S} yields
\begin{equation}\label{eq:center-df-commutes-S}
        df_1^\varepsilon(Sm)\circ S
        =
        S\circ df_1^\varepsilon(m),
        \qquad
        (df_1^\varepsilon)^{-1}(Sm)\circ S
        =
        S\circ (df_1^\varepsilon)^{-1}(m).
\end{equation}
The identities \eqref{eq:center-symH2} and \eqref{eq:center-symOmega2} follow
from \eqref{eq:center-Htilde-def}, \eqref{eq:center-Omega-tilde},
\eqref{eq:symH}, \eqref{eq:symOmega}, and
\eqref{eq:center-f-commutes-S}--\eqref{eq:center-df-commutes-S}. Finally,
\eqref{eq:center-symvH2} follows from the transformed vector-field formula
\eqref{eq:center-new-vH} and the original reversibility identity
\eqref{eq:symvH}.
\end{proof}

\subsection{Hamiltonian center-manifold theorem and Darboux coordinates}

We use Mielke's Hamiltonian center-manifold theorem
\cite{mielke1991hamiltonian}, in the form used in water-wave spatial dynamics;
see also \cite{buffoni1996plethora,buffoni1999multiplicity}. We state only the
version needed below.

\begin{theorem}[Hamiltonian center manifold and Darboux coordinates]
\label{thm:Mielke_center_Darboux}
Consider the reversible Hamiltonian equation
\begin{equation}\label{eq:Mielke-abstract-equation}
        \dot u=\mathsf L u+\mathsf B(u,\epsilon),
        \qquad
        u\in\mathsf M,\quad \epsilon\in\mathbb R^\ell,
\end{equation}
where \(\mathsf M\) is a Hilbert space,
\(\mathsf L:\operatorname{dom}(\mathsf L)\subset\mathsf M\to\mathsf M\) is a
densely defined closed operator, and \(\operatorname{dom}(\mathsf L)\) is
equipped with the graph norm. Assume that \(u=0\) is an equilibrium at
\(\epsilon=0\), and that the following hypotheses hold:
\begin{enumerate}[label=(H\arabic*),leftmargin=2.4em]
\item The part of \(\sigma(\mathsf L)\) on the imaginary axis consists of
finitely many eigenvalues of finite algebraic multiplicity and is separated
from the rest of the spectrum. If \(\mathsf P\) denotes the corresponding
spectral projection, then
\begin{equation}\label{eq:Mielke-spectral-splitting}
        \mathsf M=\mathsf M_1\oplus\mathsf M_2,
        \qquad
        \mathsf M_1:=\mathsf P\mathsf M,\qquad
        \mathsf M_2:=(I-\mathsf P)\mathsf M .
\end{equation}

\item With
\(\mathsf L_2:=\mathsf L|_{\mathsf M_2\cap\operatorname{dom}(\mathsf L)}\),
one has
\begin{equation}\label{eq:Mielke-resolvent}
        \|(\mathsf L_2-i\alpha I)^{-1}\|_{\mathsf M_2\to\mathsf M_2}
        \le \frac{C}{1+|\alpha|},
        \qquad \alpha\in\mathbb R .
\end{equation}

\item There are neighborhoods \(\mathsf U\subset\operatorname{dom}(\mathsf L)\)
and \(\mathsf E\subset\mathbb R^\ell\) of \(0\), and an integer \(k\ge2\), such
that \(\mathsf B\in C^{k+1}(\mathsf U\times\mathsf E;\mathsf M)\), with bounded
and uniformly continuous derivatives, and
\begin{equation}\label{eq:Mielke-B-vanishing}
        \mathsf B(0,0)=0,\qquad d_1\mathsf B(0,0)=0 .
\end{equation}

\item The equation is Hamiltonian with respect to
\((\mathsf M,\mathsf\Omega^\epsilon,\mathsf H^\epsilon)\), where
\(\mathsf H^\epsilon\) is \(C^k\) and \(\mathsf\Omega^\epsilon\) is \(C^{k-1}\)
in both the phase variable and the parameter.

\item There is an isometric reverser \(\mathsf S:\mathsf M\to\mathsf M\) with
\(\mathsf S^2=I\), \(\mathsf S(\operatorname{dom}(\mathsf L))\subset
\operatorname{dom}(\mathsf L)\), and
\begin{align}
        \mathsf H^\epsilon(\mathsf S u)
        &=
        \mathsf H^\epsilon(u), \label{eq:Mielke-rev-H}\\
        \mathsf\Omega^\epsilon_{\mathsf S u}[\mathsf S v,\mathsf S w]
        &=
        -\mathsf\Omega^\epsilon_u[v,w].
        \label{eq:Mielke-rev-Omega}
\end{align}
\end{enumerate}

Then, after shrinking neighborhoods, there exists a reduction function
\begin{equation}\label{eq:Mielke-reduction-map}
        \mathsf r:\widetilde{\mathsf U}_1\times\widetilde{\mathsf E}
        \longrightarrow \widetilde{\mathsf U}_2,
        \qquad
        \widetilde{\mathsf U}_i\subset\mathsf M_i,
\end{equation}
of class \(C^k\), with bounded and uniformly continuous derivatives, such that
\begin{equation}\label{eq:Mielke-r-vanishing}
        \mathsf r(0,0)=0,\qquad d_1\mathsf r(0,0)=0 .
\end{equation}
For each \(\epsilon\in\widetilde{\mathsf E}\), the graph
\begin{equation}\label{eq:Mielke-center-manifold}
        \widetilde{\mathsf M}^{\epsilon}
        :=
        \{u_1+\mathsf r(u_1,\epsilon):
        u_1\in\widetilde{\mathsf U}_1\}
\end{equation}
is a locally invariant center manifold. Every sufficiently small bounded
solution of \eqref{eq:Mielke-abstract-equation} lies on
\(\widetilde{\mathsf M}^{\epsilon}\), and the reduced equation is
\begin{equation}\label{eq:Mielke-reduced-equation}
        \dot u_1
        =
        \mathsf L u_1
        +
        \mathsf P\mathsf B(u_1+\mathsf r(u_1,\epsilon),\epsilon).
\end{equation}
Conversely, a solution \(u_1\) of \eqref{eq:Mielke-reduced-equation} gives a
solution of the full equation by
\begin{equation}\label{eq:Mielke-lift}
        u(x)=u_1(x)+\mathsf r(u_1(x),\epsilon).
\end{equation}

Moreover, \(\widetilde{\mathsf M}^{\epsilon}\) is a symplectic submanifold, and
the reduced flow is Hamiltonian with Hamiltonian and symplectic form obtained
by restriction. The center space and its complement are invariant under
\(\mathsf S\), and the reduction function is equivariant:
\begin{equation}\label{eq:Mielke-r-equivariant}
        \mathsf r(\mathsf S u_1,\epsilon)
        =
        \mathsf S\mathsf r(u_1,\epsilon).
\end{equation}

Finally, there is a \(C^{k-1}\) parameter-dependent Darboux change of variables
on the center manifold,
\begin{equation}\label{eq:Mielke-Darboux-map}
        \tilde u_1=u_1+\Theta(u_1,\epsilon),
        \qquad
        \Theta(0,0)=0,\qquad d_1\Theta(0,0)=0,
\end{equation}
which transforms the reduced symplectic form into the constant form
\begin{equation}\label{eq:Mielke-constant-symplectic}
        \Psi[v,w]
        :=
        \widetilde{\mathsf\Omega}^{0}_{0}[v,w].
\end{equation}
The Darboux map may be chosen \(\mathsf S\)-equivariant:
\begin{equation}\label{eq:Mielke-Theta-equivariant}
        \Theta(\mathsf S u_1,\epsilon)=\mathsf S\Theta(u_1,\epsilon).
\end{equation}
In the corresponding coordinates $\tilde\chi^{-1}(\tilde u_1) = \tilde u_1+\tilde{\mathsf r}(\tilde u_1,\epsilon)$, one has
\begin{equation}\label{eq:Mielke-rtilde-equivariant}
        \tilde{\mathsf r}(\mathsf S\tilde u_1,\epsilon)
        =
        \mathsf S\tilde{\mathsf r}(\tilde u_1,\epsilon).
\end{equation}
After choosing a symplectic basis of \(\mathsf M_1\), \(\Psi\) becomes the
canonical symplectic form, and the reduced dynamics are a finite-dimensional
canonical reversible Hamiltonian system.
\end{theorem}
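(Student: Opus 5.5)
This theorem is essentially a packaging of known results, so the plan is to assemble it from the literature rather than reprove the center-manifold machinery.

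\emph{Existence and invariance of the manifold.} First I would apply the parameter-dependent center-manifold theorem of Mielke \cite{mielke1991hamiltonian}, in the form used in \cite{buffoni1996plethora,buffoni1999multiplicity}. To treat $\epsilon$ as a phase variable, I would augment the system with $\dot\epsilon=0$. Hypotheses (H1)--(H3) are exactly what is needed to build a $C^k$ reduction function $\mathsf r$ by a Lyapunov--Perron fixed-point argument in exponentially weighted spaces. The resolvent bound (H2) gives the optimal regularity estimates for the hyperbolic part $\dot u_2=\mathsf L_2u_2+f$. After a cut-off of $\mathsf B$, the fixed-point map is a contraction, and the standard fiber-contraction argument gives $C^k$ smoothness. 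From this construction I would read off $\mathsf r(0,0)=0$, $d_1\mathsf r(0,0)=0$, local invariance, capture of all small bounded solutions, the reduced equation \eqref{eq:Mielke-reduced-equation}, and the lift \eqref{eq:Mielke-lift}.

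\emph{Reversibility.} Reversibility gives $\mathsf S\mathsf L=-\mathsf L\mathsf S$. Hence $\mathsf S$ maps the imaginary-axis spectrum to itself, commutes with $\mathsf P$ (via the Dunford integral, using $\lambda\mapsto-\lambda$ symmetry), and so preserves $\mathsf M_1$ and $\mathsf M_2$. If $u(x)$ is a bounded small solution, then $\mathsf Su(-x)$ is also one. By uniqueness of the fixed point, and after choosing an $\mathsf S$-invariant cut-off, this gives the equivariance \eqref{eq:Mielke-r-equivariant}.

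\emph{Hamiltonian structure on the manifold.} I would follow Mielke's argument. The restriction of $\mathsf\Omega^\epsilon$ to $T\widetilde{\mathsf M}^\epsilon$ is nondegenerate near $0$, because at $(0,0)$ it equals $\mathsf\Omega^0_0|_{\mathsf M_1}$. That form is nondegenerate since $\mathsf M_1$ and $\mathsf M_2$ are $\mathsf\Omega^0_0$-orthogonal, an orthogonality obtained from Lemma~\ref{lemequ}-type identities showing that $\mathsf L$ is $\mathsf\Omega$-skew. Because the vector field is tangent to the manifold, the identity $d\mathsf H=\mathsf\Omega[v_H,\cdot]$ restricts, so the reduced flow is Hamiltonian for the restricted pair.

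\emph{Darboux coordinates.} On the finite-dimensional space $\mathsf M_1$, I would pull back the restricted form to $\omega^\epsilon_{u_1}$ via $u_1\mapsto u_1+\mathsf r$ and apply Moser's path method with parameters. Set $\omega_t=\Psi+t(\omega^\epsilon-\Psi)$ and solve $\iota_{X_t}\omega_t=-\alpha$, where $d\alpha=\omega^\epsilon-\Psi$. The primitive $\alpha$ is given by the Poincaré homotopy formula, which preserves $C^{k-1}$ dependence and vanishes to second order at $(0,0)$. The time-one flow then gives $\Theta$ with $\Theta(0,0)=0$ and $d_1\Theta(0,0)=0$. For $\mathsf S$-equivariance, note that $\mathsf S^*\omega^\epsilon=-\omega^\epsilon$ and $\mathsf S^*\Psi=-\Psi$, and the homotopy formula commutes with linear $\mathsf S$, so $\alpha$ is anti-invariant and $X_t$ is $\mathsf S$-equivariant. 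Composition then gives \eqref{eq:Mielke-rtilde-equivariant}. Finally, I would pick a symplectic basis of $(\mathsf M_1,\Psi)$ adapted to $\mathsf S$, for instance $q$ even and $p$ odd.

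The main obstacle is the loss of one derivative in the Darboux step ($C^k\to C^{k-1}$). A related subtlety is that $\mathsf\Omega$ is only weakly nondegenerate on $\mathsf M$, so the orthogonality $\mathsf\Omega^0_0[\mathsf M_1,\mathsf M_2]=0$ must be proved through the spectral projection rather than by inverting $\mathsf\Omega$. I would handle this as in \cite{mielke1991hamiltonian}, writing $\mathsf P$ as a contour integral of resolvents and using $\mathsf\Omega^0_0[(\mathsf L-\lambda)^{-1}u,v]=-\mathsf\Omega^0_0[u,(\mathsf L+\lambda)^{-1}v]$.
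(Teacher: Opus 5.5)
Your proposal is correct and follows essentially the paper's route. You cite Mielke for the center manifold and the Hamiltonian reduction, then run Moser's method with an $\mathsf S$-anti-invariant primitive, so the Moser vector field, and hence $\Theta$, is $\mathsf S$-equivariant. The only cosmetic difference is that the paper symmetrizes an arbitrary primitive as $(\alpha^0-\mathsf S^*\alpha^0)/2$, whereas you obtain anti-invariance directly from the Poincar\'e homotopy formula, which commutes with the linear isometry $\mathsf S$ on an $\mathsf S$-invariant ball.
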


\begin{proof}
The center-manifold and Hamiltonian-reduction statements are the standard
theorems of Mielke~\cite{mielke1991hamiltonian}; the parameter-dependent
Darboux statement is the theorem of Buffoni--Groves
\cite[Theorem~4]{buffoni1999multiplicity}. We only record why the Darboux map
may be chosen to preserve the reverser.

By \eqref{eq:Mielke-r-equivariant}, the pulled-back reduced symplectic form
satisfies
\begin{equation}\label{eq:Mielke-reduced-Omega-anti}
        \mathsf S^*\widetilde{\mathsf\Omega}^{\epsilon}
        =
        -\widetilde{\mathsf\Omega}^{\epsilon}.
\end{equation}
Indeed, for \(v,w\in\mathsf M_1\),
\begin{equation}\label{eq:Mielke-reduced-Omega-anti-expanded}
\begin{aligned}
\widetilde{\mathsf\Omega}^{\epsilon}_{\mathsf S u_1}
[\mathsf S v,\mathsf S w]
&=
\mathsf\Omega^\epsilon_{\mathsf S(u_1+\mathsf r(u_1,\epsilon))}
\left[
\mathsf S(v+d_1\mathsf r(u_1,\epsilon)[v]),
\mathsf S(w+d_1\mathsf r(u_1,\epsilon)[w])
\right]  \\
&=
-\mathsf\Omega^\epsilon_{u_1+\mathsf r(u_1,\epsilon)}
\left[
v+d_1\mathsf r(u_1,\epsilon)[v],
w+d_1\mathsf r(u_1,\epsilon)[w]
\right] \\
&=
-\widetilde{\mathsf\Omega}^{\epsilon}_{u_1}[v,w].
\end{aligned}
\end{equation}

In the proof of the Darboux theorem, one chooses a one-form \(\alpha^0\) with $d\alpha^0 = \widetilde{\mathsf\Omega}^{\epsilon} - \widetilde{\mathsf\Omega}^{0}_{0}$. We note that replacing it by $\widetilde\alpha^0:=(\alpha^0-\mathsf S^*\alpha^0)/2$ does not change its exterior derivative, since both \(\widetilde{\mathsf\Omega}^{\epsilon}\) and
\(\widetilde{\mathsf\Omega}^{0}_{0}\) are anti-invariant under \(\mathsf S\). Moreover,
\begin{equation}\label{eq:Mielke-alpha-anti}
        \mathsf S^*\widetilde\alpha^0=-\widetilde\alpha^0.
\end{equation}
The Moser vector field used in the Darboux construction is defined by
\begin{equation}\label{eq:Mielke-Moser-field}
        \iota_{X_t}
        \bigl(
        \widetilde{\mathsf\Omega}^{0}_{0}
        +t(\widetilde{\mathsf\Omega}^{\epsilon}
        -\widetilde{\mathsf\Omega}^{0}_{0})
        \bigr)
        =
        -\widetilde\alpha^0 .
\end{equation}
Equations \eqref{eq:Mielke-reduced-Omega-anti} and
\eqref{eq:Mielke-alpha-anti} imply
\begin{equation}\label{eq:Mielke-Moser-equivariant}
        X_t(\mathsf S u_1)=\mathsf S X_t(u_1).
\end{equation}
Hence the Moser flow is \(\mathsf S\)-equivariant, and so the resulting Darboux
correction \(\Theta\) satisfies \eqref{eq:Mielke-Theta-equivariant}. This gives
\eqref{eq:Mielke-rtilde-equivariant}. 
\end{proof}

\section{Spectral reduction and the KdV normal form}\label{sec:normalform}

We now fix \(\omega_0=1\) and \(\sigma_0>1/3\). In the two-parameter
formulation, \(\varepsilon_1\) measures the departure of the vorticity from the
critical value \(1\), while \(\varepsilon_2\) measures the departure of the
surface-tension parameter from the fixed value \(\sigma_0\). For the solitary
waves constructed in Theorem~\ref{thm:main}, we set \(\varepsilon_2=0\) and
write \(\varepsilon=(\varepsilon_1,0)\), where \(|\varepsilon_1|\ll1\).

Our goal is to construct a homoclinic orbit for \eqref{eq:center-v-L-plus-B}.
The argument has three distinct parts. First, we verify the spectral and
resolvent hypotheses of the Hamiltonian center-manifold theorem at the critical
shear. Second, we compute the cubic Hamiltonian coefficients in Darboux
coordinates. Third, we use the long-wave scaling dictated by the double root of
the dispersion relation to obtain a \(C^1\)-small perturbation of the stationary
KdV homoclinic orbit.

We first verify the hypotheses of Theorem~\ref{thm:Mielke_center_Darboux}. The
smoothness hypothesis (\textbf{H3}) holds with
\(\epsilon=\varepsilon_1\in\mathbb R\), with
\(\mathsf E=(-\delta_*,\delta_*)\) for \(\delta_*>0\) sufficiently small, and
with \(\mathsf U=U_*\cap\operatorname{dom}(\mathbf L)\) for \(U_*\) a
sufficiently small neighborhood of \(0\). Hypothesis (\textbf{H4}) follows from
the construction of \(\tilde H^{\varepsilon_1}\) and
\(\tilde\Omega^{\varepsilon_1}\), while (\textbf{H5}) follows from
\eqref{eq:center-symH2}--\eqref{eq:center-symvH2} with reverser
\(\mathsf S=S\). It remains to verify the spectral hypothesis \((\textbf{H1})\)
and the resolvent hypothesis \((\textbf{H2})\).

\subsection{Critical shear spectrum}

By Lemma~\ref{lembfL}, \(\mathbf L\) is a closed densely defined operator on
\(M_0\), and its graph norm is equivalent to the \(X^2\)-norm on
\(\operatorname{dom}(\mathbf L)\). Since \(X^2\hookrightarrow X^1\) compactly,
the resolvent of \(\mathbf L\), once nonempty, is compact. Thus the spectrum is
discrete, consisting of isolated eigenvalues of finite algebraic multiplicity.
We now identify the part of the spectrum on the imaginary axis.

\begin{lemma}\label{lemnece}
Assume \(\omega_0=1\) and \(\sigma_0>1/3\). A complex number \(\lambda\) is an
eigenvalue of \(\mathbf L\) if and only if
\begin{equation}\label{dispersion relation omega sigma}
        \lambda\cos\lambda=(1-\sigma_0\lambda^2)\sin\lambda .
\end{equation}
In particular, the only purely imaginary eigenvalue of \(\mathbf L\) is
\(\lambda=0\).
\end{lemma}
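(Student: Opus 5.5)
The plan is to write out the eigenvalue problem $\mathbf L v=\lambda v$ for $v=(\Phi,\theta,Z,\eta)\in\operatorname{dom}(\mathbf L)$ using \eqref{eq:center-L-formula} with $\omega_0=1$. Componentwise it reads
\begin{align*}
&\theta_y+\tfrac{1}{2\sigma_0}\bigl(\theta_y(1)+\eta\bigr)\bigl(\tfrac13-y^2\bigr)+\eta(2y-1)=\lambda\Phi,\\
&-\Phi_y+3(\Phi(1)+Z)(1-y)y=\lambda\theta,\\
&-\theta_y(1)-\eta=\lambda Z,\qquad 3(\Phi(1)+Z)=\lambda\eta,
\end{align*}
together with the constraints $\int_0^1\Phi=0$, $\theta(0)=\theta(1)=0$ and $\Phi_y(0)=\Phi_y(1)=0$.

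The last equation gives $\Phi(1)+Z=\lambda\eta/3$. Substituting this into the second equation yields $\Phi_y=-\lambda\theta+\lambda\eta\,y(1-y)$. With this form, the conditions $\Phi_y(0)=\Phi_y(1)=0$ are automatic once $\theta(0)=\theta(1)=0$. Differentiating the first equation in $y$ and inserting this expression for $\Phi_y$ gives a scalar boundary-value problem
\[
\theta_{yy}+\lambda^2\theta=\tfrac{1}{\sigma_0}\bigl(\theta_y(1)+\eta\bigr)y-2\eta+\lambda^2\eta\,y(1-y),\qquad \theta(0)=\theta(1)=0.
\]
Its right-hand side is a polynomial in $y$ depending linearly on the two unknown scalars $\eta$ and $a:=\theta_y(1)$.

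I would solve this explicitly. Take a particular polynomial solution plus $A\sin(\lambda y)+B\cos(\lambda y)$, and impose $\theta(0)=\theta(1)=0$ and the self-consistency condition $\theta_y(1)=a$. Then recover $\Phi$ from the first equation (for $\lambda\neq0$). Integrating that equation over $(0,1)$ shows that $\int_0^1\Phi=0$ holds automatically, because $\theta(1)=\theta(0)$ and the polynomial terms have zero mean. Finally recover $Z$ from the third equation. The remaining compatibility condition is $\Phi(1)+Z=\lambda\eta/3$. This leaves a homogeneous linear system in $(a,\eta)$ (together with $A,B$). I expect its determinant, after clearing common factors such as $\lambda$, to reduce exactly to $\lambda\cos\lambda-(1-\sigma_0\lambda^2)\sin\lambda$. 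The forward and backward implications then both follow: a nontrivial eigenvector exists if and only if this determinant vanishes.

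The case $\lambda=0$ is treated separately. There one checks directly that the kernel is nontrivial, for instance by exhibiting a vector with $\eta\neq0$ solving the $\lambda=0$ system, consistent with the fact that \eqref{dispersion relation omega sigma} holds trivially at $\lambda=0$.

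For the last claim, put $\lambda=ik$ with $k\in\mathbb R\setminus\{0\}$. Then \eqref{dispersion relation omega sigma} becomes $k\cosh k=(1+\sigma_0k^2)\sinh k$, that is, $k\coth k=1+\sigma_0k^2$. The elementary inequality $k\coth k<1+k^2/3$ for $k\neq0$ follows from the expansion $k\coth k=1+\sum_{n\ge1}\frac{2k^2}{k^2+n^2\pi^2}$ with $\sum_{n\ge1} 2/(n^2\pi^2)=1/3$. Combined with $\sigma_0>1/3$, it gives $k\coth k<1+\sigma_0k^2$, so no nonzero imaginary root exists.

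The main obstacle is the algebra of the determinant: tracking how $a$ and $\eta$ enter $\Phi(1)$ and $Z$, and verifying that all polynomial contributions cancel to leave exactly the stated transcendental relation. To keep this manageable I would first check the polynomial-free combination by evaluating at convenient points, and only then carry out the full elimination.
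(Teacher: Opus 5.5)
Your proposal is correct, but it runs the paper's argument in mirror image. The paper uses the second and fourth eigenvalue equations to eliminate $\theta$, writing $\theta=-\Phi_y/\lambda+\eta y(1-y)$. This gives the Neumann problem $\Phi_{yy}+\lambda^2\Phi=-\frac{\Phi_{yy}(1)}{2\sigma_0}\bigl(\frac13-y^2\bigr)$ with $\Phi_y(0)=\Phi_y(1)=0$. In that problem $\eta$ cancels completely and only the single trace $\Phi_{yy}(1)$ appears, so the dispersion relation follows from one scalar self-consistency identity. The price is that the paper must check the zero-mean constraint and the case $\sin\lambda=0$ by hand.

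You instead eliminate $\Phi$ and solve a Dirichlet problem for $\theta$. This makes $\Phi_y(0)=\Phi_y(1)=0$ and $\int_0^1\Phi\,dy=0$ automatic, at the cost of two scalars, $\theta_y(1)$ and $\eta$, in the forcing. Your determinant formulation then gives both implications, including $\sin\lambda=0$, in one step.

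The computation you left as an expectation does close, with no spurious factor. Set $\kappa:=(\theta_y(1)+\eta)/(\sigma_0\lambda^2)$; the particular solution is $\eta y(1-y)+\kappa y$.
\begin{itemize}
\item $\theta(0)=0$ gives $B=0$, and $\theta(1)=0$ gives $A\sin\lambda=-\kappa$.
\item The trace condition reads $A\lambda\cos\lambda=(\sigma_0\lambda^2-1)\kappa$.
\item Since $\lambda(\Phi(1)+Z)=-(\theta_y(1)+\eta)/(3\sigma_0)$, the condition $3(\Phi(1)+Z)=\lambda\eta$ becomes $\eta=-\kappa$.
\end{itemize}
Eliminating $A$ gives $\kappa\,[\lambda\cos\lambda-(1-\sigma_0\lambda^2)\sin\lambda]=0$. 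If $\kappa=0$, then $\eta=0$, and $A=0$ because $\sin\lambda$ and $\cos\lambda$ have no common zero. Hence $\theta=0$, and then $\Phi=Z=0$, so the eigenvector is trivial.

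For $\lambda=0$, the vector $(0,y(1-y),0,1)$ is the kernel element you would exhibit. Your partial-fraction proof of $k\coth k<1+k^2/3$ supplies a step the paper only asserts.
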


\begin{proof}
Let \(u=(\Phi,\theta,Z,\eta)\in\operatorname{dom}(\mathbf L)\setminus\{0\}\)
satisfy \(\mathbf L u=\lambda u\). By \eqref{eq:center-L-formula}, this is
equivalent to
\begin{align}
\theta_y+\frac{1}{2\sigma_0}\bigl(\theta_y(1)+\eta\bigr)
\left(\frac13-y^2\right)+\eta(2y-1)
&=\lambda\Phi, \label{eq:E1}\\
-\Phi_y+3(\Phi(1)+Z)(1-y)y
&=\lambda\theta, \label{eq:E2}\\
-\theta_y(1)-\eta
&=\lambda Z, \label{eq:E3}\\
3(\Phi(1)+Z)
&=\lambda\eta. \label{eq:E4}
\end{align}

If \(\lambda=0\), then \eqref{eq:E4} gives \(\Phi(1)+Z=0\), while
\eqref{eq:E2} gives \(\Phi_y=0\). Since \(\int_0^1\Phi\,dy=0\), we have
\(\Phi\equiv0\) and \(Z=0\). Equations \eqref{eq:E1} and \eqref{eq:E3} reduce
to \(\theta_y+\eta(2y-1)=0\) and \(\theta_y(1)+\eta=0\). Together with
\(\theta(0)=\theta(1)=0\), this gives \(\theta(y)=\eta(y-y^2)\). Hence
\(\lambda=0\) is an eigenvalue, with eigenvector
\begin{equation}\label{eq:zero-eigenvector}
        (\Phi,\theta,Z,\eta)=\bigl(0,(1-y)y,0,1\bigr).
\end{equation}

We now assume \(\lambda\neq0\). Equations \eqref{eq:E4} and \eqref{eq:E2} give
\begin{align}
        \Phi(1)+Z=\frac{\lambda}{3}\eta,\qquad \theta=-\frac{1}{\lambda}\Phi_y+\eta(1-y)y.
        \label{eq:theta_in_terms_of_Phi}
\end{align}
Therefore
\begin{equation}\label{eq:theta_y_in_terms_of_Phi}
        \theta_y=-\frac{1}{\lambda}\Phi_{yy}+\eta(1-2y),
        \qquad
        \theta_y(1)=-\frac{1}{\lambda}\Phi_{yy}(1)-\eta .
\end{equation}
Equation \eqref{eq:E3} gives \(Z=\Phi_{yy}(1)/\lambda^2\), and hence
\begin{equation}\label{eq:eta_from_Phi}
        \eta
        =
        \frac{3}{\lambda}
        \left(\Phi(1)+\frac{\Phi_{yy}(1)}{\lambda^2}\right).
\end{equation}
Substituting \eqref{eq:theta_y_in_terms_of_Phi} into \eqref{eq:E1} yields
\begin{equation}\label{eq:Phi_forced_ODE}
        \Phi_{yy}+\lambda^2\Phi
        =
        -\frac{\Phi_{yy}(1)}{2\sigma_0}
        \left(\frac13-y^2\right)
        \quad\text{in }L^2(0,1).
\end{equation}

Set \(A:=\Phi_{yy}(1)\). A particular solution of
\eqref{eq:Phi_forced_ODE} is
\begin{equation}\label{eq:Phi_particular}
        \Phi_p(y)
        =
        \frac{A}{2\sigma_0\lambda^2}
        \left(y^2-\frac13\right)
        -\frac{A}{\sigma_0\lambda^4}.
\end{equation}
Thus
\begin{equation}\label{eq:Phi_general}
        \Phi(y)
        =
        c_1\cos(\lambda y)+c_2\sin(\lambda y)
        +\frac{A}{2\sigma_0\lambda^2}
        \left(y^2-\frac13\right)
        -\frac{A}{\sigma_0\lambda^4}.
\end{equation}
The boundary conditions \(\Phi_y(0)=\Phi_y(1)=0\) imply
\begin{equation}\label{eq:c1_from_A}
        c_2=0,\qquad
        c_1=\frac{A}{\sigma_0\lambda^3\sin\lambda}.
\end{equation}
For a nontrivial eigenvector, \(\sin\lambda\neq0\). Indeed, if
\(\sin\lambda=0\), then the boundary conditions give \(A=0\); since
\(A=\Phi_{yy}(1)\), the formula \eqref{eq:Phi_general} then forces
\(c_1=c_2=0\), and hence \(u=0\).

Using \eqref{eq:c1_from_A}, we compute
\begin{equation}\label{eq:Phi_yy_general}
        \Phi_{yy}(y)
        =
        -\frac{A\cos(\lambda y)}{\sigma_0\lambda\sin\lambda}
        +\frac{A}{\sigma_0\lambda^2}.
\end{equation}
Evaluating at \(y=1\) and using \(A=\Phi_{yy}(1)\), we obtain
\begin{equation}\label{eq:trace_compatibility}
        A
        =
        -\frac{A}{\sigma_0\lambda}\cot\lambda
        +\frac{A}{\sigma_0\lambda^2}.
\end{equation}
Since \(A\neq0\), \eqref{eq:trace_compatibility} is equivalent to
\eqref{dispersion relation omega sigma}. This proves the necessity.

Conversely, suppose that \(\lambda\neq0\) satisfies
\eqref{dispersion relation omega sigma}. Then \(\sin\lambda\neq0\). Choose
\(A\neq0\), define \(c_1,c_2\) by \eqref{eq:c1_from_A}, and define \(\Phi\) by
\eqref{eq:Phi_general}. The relation \eqref{eq:trace_compatibility} follows
from \eqref{dispersion relation omega sigma}; hence \(A=\Phi_{yy}(1)\) and
\(\Phi_y(0)=\Phi_y(1)=0\). Also
\begin{equation}\label{eq:Phi_mean_zero}
        \int_0^1\Phi(y)\,dy
        =
        c_1\frac{\sin\lambda}{\lambda}
        -\frac{A}{\sigma_0\lambda^4}
        =
        0.
\end{equation}
Define \(Z=\Phi_{yy}(1)/\lambda^2\), define \(\eta\) by
\eqref{eq:eta_from_Phi}, and define \(\theta\) by
\eqref{eq:theta_in_terms_of_Phi}. Then
\(u=(\Phi,\theta,Z,\eta)\in\operatorname{dom}(\mathbf L)\), and direct
substitution verifies \eqref{eq:E1}--\eqref{eq:E4}. Hence \(\lambda\) is an
eigenvalue.

Finally, take \(\lambda=i\alpha\), \(\alpha\in\mathbb R\). Substitution into
\eqref{dispersion relation omega sigma} gives
\begin{equation}\label{eq:imaginary_eigen_equation}
        \alpha\cosh\alpha=(1+\sigma_0\alpha^2)\sinh\alpha .
\end{equation}
If \(\alpha=0\), this is the eigenvalue already found. If \(\alpha\neq0\), then
\begin{equation}\label{eq:imaginary_coth_equation}
        \alpha\coth\alpha=1+\sigma_0\alpha^2 .
\end{equation}
But \(\alpha\coth\alpha<1+\alpha^2/3\) for \(\alpha\neq0\), contradicting
\(\sigma_0>1/3\). Thus no nonzero purely imaginary eigenvalue exists.
\end{proof}

By Kato's perturbation theory \cite[Chapter~III, Theorem~6.17]{kato2013perturbation},
the isolated spectral point \(0\) has an associated spectral projection. Let
\(\Gamma\) be a contour in the resolvent set of \(\mathbf L\), enclosing \(0\)
and no other point of \(\sigma(\mathbf L)\). Define
\begin{equation}\label{kato map}
        \mathbf P_1 v
        :=
        \frac{1}{2\pi i}
        \oint_\Gamma
        (\lambda\mathbf I-\mathbf L)^{-1}v\,d\lambda,
        \qquad
        \mathbf P_2:=\mathbf I-\mathbf P_1.
\end{equation}
Then \begin{equation}\label{eq:M0-spectral-decomposition} M_0= M_1\oplus M_2, \qquad M_1:=\mathbf P_1 M_0,\qquad M_2:=\mathbf P_2 M_0 . \end{equation} The space \(M_1\) is finite dimensional. Since \(\mathbf P_1\) is a spectral projection, both \(M_1\) and \(M_2\) are invariant under \(\mathbf L\). Moreover, Lemma~\ref{lemnece} shows that the spectrum of \(\mathbf L|_{M_1}\) is contained in the imaginary axis, whereas \[ \mathbf L_2 := \mathbf L|_{\operatorname{dom}(\mathbf L)\cap M_2} \] has no spectrum on the imaginary axis. Hence hypothesis \textbf{(H1)} holds.

\subsection{Resolvent estimate on the hyperbolic complement}

We now verify the resolvent hypothesis \textup{(H2)}. Similar estimates appear
in \cite[Lemma~2.1]{iooss1992water},
\cite[Proposition~3.2]{buffoni1996plethora},
\cite[Proposition~2]{buffoni1999multiplicity},
\cite[Lemma~3.4]{groves2001spatial},
\cite[Lemma~3.4]{groves2007spatial}, and
\cite[Lemma~3.8]{hur2023unstable}.

\begin{lemma}[Resolvent estimate]\label{resolvent estimate}
Assume \(\omega_0=1\) and \(\sigma_0>1/3\). There exist constants
\(\alpha_0>0\) and \(C>0\) such that \(i\alpha\in\rho(\mathbf L)\) and
\begin{equation}\label{ineq 4.3}
        \|(\mathbf L-i\alpha I)^{-1}\|_{M_0\to M_0}
        \le \frac{C}{|\alpha|},
        \qquad |\alpha|>\alpha_0 .
\end{equation}
\end{lemma}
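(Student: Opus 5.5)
We solve the resolvent equation $(\mathbf L-i\alpha I)u=f$ explicitly, for $f=(f_1,f_2,f_3,f_4)\in M_0$ and $u=(\Phi,\theta,Z,\eta)\in\operatorname{dom}(\mathbf L)$. This mimics the eigenvalue computation in Lemma~\ref{lemnece}, with $\lambda=i\alpha$ and with the forcing terms added. Write $\lambda=i\alpha$. From the fourth component, $\Phi(1)+Z=(\lambda\eta+f_4)/3$. From the second component,
\[
\theta=-\lambda^{-1}\bigl(\Phi_y+f_2\bigr)+\lambda^{-1}(\lambda\eta+f_4)(1-y)y ,
\]
and the third component then expresses $Z$ through $\Phi_{yy}(1)$, $f_{2,y}(1)$ and $f_3$. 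Substituting into the first component gives a forced boundary-value problem:
\[
\Phi_{yy}-\alpha^2\Phi=-\frac{A}{2\sigma_0}\Bigl(\frac13-y^2\Bigr)+F,\qquad \Phi_y(0)=\Phi_y(1)=0,
\]
where $A=\Phi_{yy}(1)$ (up to forcing corrections) and $F=-\lambda f_1-f_{2,y}+(\text{terms in }f_3,f_4,\eta)$. This $F$ lies only in $L^2$, since $f_2\in H^1$.

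\textbf{Solving and estimating.} I would solve this with the Neumann Green's function of $\partial_y^2-\alpha^2$ on $(0,1)$, which has the explicit kernel $\cosh(\alpha y_<)\cosh(\alpha(1-y_>))/(\alpha\sinh\alpha)$. The scalar unknowns $A$ and $\eta$ are then fixed by the two compatibility (trace) conditions and by the zero-mean constraint $\int_0^1\Phi=0$. The resulting $2\times2$ linear system has determinant proportional to
\[
\alpha\cosh\alpha-(1+\sigma_0\alpha^2)\sinh\alpha .
\]
For large $|\alpha|$ this behaves like $-\sigma_0\alpha^2\sinh\alpha$, so it is bounded below by $c\,\alpha^2\sinh|\alpha|$. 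This is where $\sigma_0>1/3$ (indeed just $\sigma_0>0$ for large $\alpha$) keeps $i\alpha$ in the resolvent set, in agreement with Lemma~\ref{lemnece}.

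The key bounds are the standard weighted energy estimates for $w_{yy}-\alpha^2w=g$ with Neumann conditions:
\[
\alpha^2\|w\|_{L^2}+|\alpha|\|w_y\|_{L^2}\le C\|g\|_{L^2},\qquad |\alpha|^{-1}\|g_1\|\text{-type gains when }g=\partial_y g_1 ,
\]
together with trace bounds $|w(1)|\lesssim |\alpha|^{-3/2}\|g\|$. These are obtained by multiplying by $\bar w$ and integrating by parts. Applied with $g=-i\alpha f_1-f_{2,y}$, they give $\|\Phi\|_{H^1}\lesssim |\alpha|^{-1}(\|f_1\|_{H^1}+\|f_2\|_{H^1})$. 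The ingredients are:
\begin{itemize}
\item the forcing $i\alpha f_1$ costs one power of $\alpha$ in $L^2$, but the $H^1$ estimate gains $|\alpha|^{-1}\cdot|\alpha|^{-1}\cdot|\alpha|$ overall;
\item the $f_{2,y}$ forcing is handled in divergence form;
\item the polynomial forcing from $A$ and $\eta$ is controlled once $|A|,|\eta|$ are bounded via Cramer's rule with the determinant lower bound above.
\end{itemize}
Then $\theta$ is recovered from the formula above: $\|\theta\|_{H^1}\lesssim |\alpha|^{-1}(\|\Phi\|_{H^2}+\|f_2\|_{H^1})+\dots$, where $\|\Phi\|_{H^2}$ comes from the equation itself. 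Finally $Z$ and $\eta$ are recovered as scalars, each $O(|\alpha|^{-1}\|f\|)$.

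\textbf{Main obstacle.} The main difficulty is the bookkeeping of the nonlocal scalar couplings, namely $\Phi(1)$, $\theta_y(1)$ and $\eta$. Their coefficients grow polynomially in $\alpha$ and must cancel against the determinant. I would first verify, at the level of leading asymptotics ($\cosh\approx\sinh\approx e^{|\alpha|}/2$), that each Cramer numerator is at most $O(|\alpha|\,e^{|\alpha|}\|f\|)$ relative to a determinant of size $\alpha^2 e^{|\alpha|}$. This yields $|A|+|\alpha||\eta|\lesssim\|f\|$. Combining all the bounds gives $\|u\|_{X^1}\le C|\alpha|^{-1}\|f\|_{X^1}$ for $|\alpha|>\alpha_0$, which is \eqref{ineq 4.3}.
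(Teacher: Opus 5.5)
Your route differs from the paper's. The paper never solves the resolvent equation. It derives $L^2$ and $y$-differentiated energy identities directly from the first two components, and controls the trace $\theta_y(1)$ by testing the differentiated first component against $y^n$ with $n\approx|\alpha|$. You instead eliminate $\theta$, solve a Neumann problem and reduce to scalar conditions. This has the merit of constructing $(\mathbf L-i\alpha I)^{-1}$ explicitly, so $i\alpha\in\rho(\mathbf L)$ is proved directly rather than inferred from Lemma~\ref{lemnece} and discreteness of the spectrum. However, the step you single out as the main obstacle is resolved incorrectly, and as written the argument does not close.

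\textbf{The claimed bound $|A|\lesssim\|f\|$ is false.} Set $B:=\theta_y(1)+\eta$. For $f_2\in H^1$ the traces $\Phi_{yy}(1)$ and $f_{2,y}(1)$ are not separately defined; only $(\Phi_{yy}+f_{2,y})(1)=-i\alpha B-f_4$ is, and your coefficient is $A=-i\alpha B$.

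The reduced equation is exactly $\Phi_{yy}-\alpha^2\Phi=\frac{i\alpha B}{2\sigma_0}\bigl(\frac13-y^2\bigr)-i\alpha f_1-f_{2,y}+f_4(1-2y)$, with no $\eta$ and no $f_3$. The zero-mean condition is automatic (integrate the equation and use $f\in M_0$). Evaluating the first component at $y=1$ gives the single scalar equation $\bigl(1-\frac{1}{3\sigma_0}\bigr)B=i\alpha\Phi(1)+f_1(1)$, after which $Z$ and $\eta$ follow from the last two components.

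Write $\Phi=B\Phi^{(B)}+\Phi^{(f)}$ and use your Green's function. The scalar equation becomes $D(\alpha)B=\frac{1}{\sinh\alpha}\int_0^1\sinh(\alpha s)f_1'(s)\,ds+O(|\alpha|^{-1/2}\|f\|)$, with $D(\alpha)=1-\frac{\coth\alpha}{\sigma_0\alpha}+\frac{1}{\sigma_0\alpha^2}\to1$. Up to sign, $\sigma_0\alpha^2\sinh\alpha\,D(\alpha)$ is your determinant.

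Now take $f=(f_1,0,0,0)$ with $f_1'=|\alpha|^{1/2}$ on $[1-|\alpha|^{-1},1]$, zero elsewhere, and $f_1$ of mean zero. This gives $|B|\sim|\alpha|^{-1/2}$, hence $|A|\sim|\alpha|^{1/2}\|f\|_{X^1}$.

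With this true size, the estimates you state lose powers of $\alpha$ exactly where the forcing $-i\alpha f_1$ is of size $|\alpha|$:
\begin{itemize}
\item The bound $|\alpha|\|w_y\|\lesssim\|g\|$ gives only $\|w_y\|\lesssim\|f_1\|$ for $g=-i\alpha f_1$, and only $|A|/|\alpha|\sim|\alpha|^{-1/2}$ for the polynomial forcing.
\item The trace bound $|w(1)|\lesssim|\alpha|^{-3/2}\|g\|$ gives only $|B|\lesssim|\alpha|^{1/2}\|f\|$, hence $|Z|=|B+f_3|/|\alpha|\lesssim|\alpha|^{-1/2}\|f\|$.
\item Reading $\|\Phi_{yy}\|\le\alpha^2\|\Phi\|+\|g\|$ off the equation gives $\|\Phi_{yy}\|\lesssim|\alpha|\|f\|$, hence only $\|\theta_y\|\lesssim\|f\|$ instead of $|\alpha|^{-1}\|f\|$.
\end{itemize}

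\textbf{The missing ingredient.} You need the derivative-level identity $\|w_{yy}\|^2+\alpha^2\|w_y\|^2=\mathrm{Re}\langle g,w_{yy}\rangle$ for $w_{yy}-\alpha^2w=g$ with $w_y(0)=w_y(1)=0$. Your first bullet gestures at this for $f_1$, but it is needed throughout.

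Integrating by parts once more, for $g\in H^1$ it gives $\|w_y\|\le\alpha^{-2}\|g_y\|$ and $\|w_{yy}\|\le|\alpha|^{-1}\|g_y\|$. This is where $f_1\in H^1$ enters; for $g=-f_{2,y}$ your basic bound already suffices.

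Each piece of $\Phi$ has zero mean, so $|w(1)|\le\|w_y\|$. Hence $|i\alpha\Phi^{(f)}(1)|\lesssim\|f\|$, which gives $|B|\lesssim\|f\|$ and $|A|\lesssim|\alpha|\|f\|$. The polynomial forcing then contributes $\lesssim|\alpha|^{-1}\|f\|$ to $\|\Phi_y\|$ and $\lesssim\|f\|$ to $\|\Phi_{yy}\|$.

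This yields $\|\Phi\|_{H^1}+|Z|+|\eta|\lesssim|\alpha|^{-1}\|f\|$ and $\|\Phi_{yy}\|\lesssim\|f\|$. Your formula for $\theta$ then gives $\|\theta\|_{H^1}\lesssim|\alpha|^{-1}\|f\|$, which is \eqref{ineq 4.3}.

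This identity is the counterpart of the paper's differentiated energy estimate. What your approach genuinely replaces is the $y^n$-testing device: you use instead the exact scalar equation for $\theta_y(1)+\eta$, whose coefficient $D(\alpha)$ is computed explicitly and tends to $1$.
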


\begin{proof}
By Lemma~\ref{lemnece}, \(i\alpha\in\rho(\mathbf L)\) for every
\(\alpha\neq0\). Let
\(v=(\Phi,\theta,Z,\eta)\in\operatorname{dom}(\mathbf L)\) and
\(\tilde v=(\tilde\Phi,\tilde\theta,\tilde Z,\tilde\eta)\in M_0\) satisfy
\((\mathbf L-i\alpha I)v=\tilde v\). By \eqref{eq:center-L-formula},
\begin{equation}\label{L-isigma=v}
\begin{aligned}
\theta_y+\frac{1}{2\sigma_0}(\theta_y(1)+\eta)
\left(\frac13-y^2\right)+\eta(2y-1)-i\alpha\Phi
&=\tilde\Phi,\\
-\Phi_y+3(\Phi(1)+Z)(1-y)y-i\alpha\theta
&=\tilde\theta,\\
-\theta_y(1)-\eta-i\alpha Z
&=\tilde Z,\\
3(\Phi(1)+Z)-i\alpha\eta
&=\tilde\eta .
\end{aligned}
\end{equation}
In this proof \(\|\cdot\|\) denotes the \(L^2(0,1)\)-norm, and
\(A\lesssim B\) means \(A\le CB\), with \(C\) independent of \(\alpha\).

Taking the \(L^2\)-norms of the first two equations in
\eqref{L-isigma=v}, squaring and adding, gives
\begin{equation}\label{first}
\begin{aligned}
\|\theta_y\|^2+\|\Phi_y\|^2+\alpha^2(\|\theta\|^2+\|\Phi\|^2)
&=
\|\tilde\theta-3(\Phi(1)+Z)(y-y^2)\|^2 \\
&\quad+
\left\|
\tilde\Phi-\frac{\theta_y(1)+\eta}{2\sigma_0}
\left(\frac13-y^2\right)-\eta(2y-1)
\right\|^2 .
\end{aligned}
\end{equation}
Differentiating the first two equations in \eqref{L-isigma=v} with respect to
\(y\), and using \(\Phi_y(0)=\Phi_y(1)=0\), yields
\begin{equation}\label{second}
\begin{aligned}
\|\theta_{yy}\|^2+\|\Phi_{yy}\|^2+\alpha^2(\|\theta_y\|^2+\|\Phi_y\|^2)
&=
\|\tilde\theta_y-3(\Phi(1)+Z)(1-2y)\|^2 \\
&\quad+
\left\|
\tilde\Phi_y+\frac{\theta_y(1)+\eta}{\sigma_0}y-2\eta
\right\|^2 .
\end{aligned}
\end{equation}
The last two equations in \eqref{L-isigma=v} imply
\begin{align}
        \alpha^2|Z|^2
        &\lesssim |\tilde Z|^2+|\theta_y(1)|^2+|\eta|^2,
        \label{ineq 4.10}\\
        \alpha^2|\eta|^2
        &\lesssim |\tilde\eta|^2+|\Phi(1)|^2+|Z|^2 .
        \label{inequality 410}
\end{align}
Combining \eqref{first}--\eqref{inequality 410}, and using the trace estimate
\eqref{traceb}, gives
\begin{equation}\label{inequality 49}
\begin{aligned}
&\alpha^2
\left(
\|\theta\|^2+\|\Phi\|^2+\|\theta_y\|^2+\|\Phi_y\|^2+|Z|^2+|\eta|^2
\right) \\
&\quad\lesssim
\|\tilde\theta\|_{H^1}^2+\|\tilde\Phi\|_{H^1}^2
+|\tilde Z|^2+|\tilde\eta|^2
+\|\Phi\|_{H^1}^2+|Z|^2+|\eta|^2+|\theta_y(1)|^2 .
\end{aligned}
\end{equation}

It remains to control the trace \(\theta_y(1)\). Differentiating the first
equation in \eqref{L-isigma=v}, testing against \(y^n\), and integrating over
\((0,1)\), we obtain
\begin{equation}\label{eq:theta-trace-resolvent}
\begin{aligned}
\left(1-\frac{1}{\sigma_0(n+2)}\right)\theta_y(1)
&=
n\int_0^1 y^{n-1}\theta_y\,dy
+\int_0^1 y^n\tilde\Phi_y\,dy
+i\alpha\int_0^1 y^n\Phi_y\,dy  \\
&\quad
+\frac{\eta}{\sigma_0(n+2)}
-\frac{2\eta}{n+1}.
\end{aligned}
\end{equation}
For \(n\) sufficiently large, depending only on \(\sigma_0\), H\"older's
inequality gives
\begin{equation}\label{third}
        |\theta_y(1)|^2
        \lesssim
        n\|\theta_y\|^2
        +\frac{1}{n}\|\tilde\Phi_y\|^2
        +\frac{\alpha^2}{n}\|\Phi_y\|^2
        +\frac{1}{n^2}|\eta|^2 .
\end{equation}
Substituting \eqref{third} into \eqref{inequality 49} gives
\begin{equation}\label{inequality 499}
\begin{aligned}
&\alpha^2
\left(
\|\theta\|^2+\|\Phi\|^2+\|\theta_y\|^2+\|\Phi_y\|^2+|Z|^2+|\eta|^2
\right) \\
&\quad\lesssim
\|\tilde\theta\|_{H^1}^2+\|\tilde\Phi\|_{H^1}^2
+|\tilde Z|^2+|\tilde\eta|^2
+\|\Phi\|_{H^1}^2+|Z|^2+|\eta|^2
+n\|\theta_y\|^2+\frac{\alpha^2}{n}\|\Phi_y\|^2 .
\end{aligned}
\end{equation}
Choose \(\alpha_0\) large and then, for \(|\alpha|>\alpha_0\), choose
\(n\in\mathbb N\) with \(n\le |\alpha|<n+1\). Absorbing the lower-order terms in
\eqref{inequality 499}, we find
\begin{equation}\label{inequality 412}
        \|v\|_{M_0}
        \lesssim
        \frac{1}{|\alpha|}\|\tilde v\|_{M_0},
        \qquad |\alpha|>\alpha_0 .
\end{equation}
This is exactly \eqref{ineq 4.3}.
\end{proof}

\subsection{Cubic Hamiltonian coefficients and the reduced system}

We now apply Theorem~\ref{thm:Mielke_center_Darboux}. The center space is
two-dimensional, and we first choose a symplectic basis.

\begin{lemma}[Center space]\label{lem:center-space}
Assume \(\omega_0=1\) and \(\sigma_0>1/3\). Then \(0\) is an eigenvalue of
\(\mathbf L\) with algebraic multiplicity two and geometric multiplicity one.
Moreover,
\begin{equation}\label{eq:L-kernel-chain}
        \ker(\mathbf L^2)=\ker(\mathbf L^3)
        =
        \operatorname{span}\{\mathbf\Phi_1,\mathbf\Phi_2\},
\end{equation}
where
\begin{equation}\label{eq:Phi-chain}
        \mathbf\Phi_1=\vet{0}{(1-y)y}{0}{1},
        \qquad
        \mathbf\Phi_2=\vet{0}{0}{1/3}{0},
        \qquad
        \mathbf L\mathbf\Phi_1=0,\quad
        \mathbf L\mathbf\Phi_2=\mathbf\Phi_1 .
\end{equation}
\end{lemma}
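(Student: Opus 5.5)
We verify the chain by direct substitution into the formula \eqref{eq:center-L-formula} for $\mathbf L$ with $\omega_0=1$, and then compute the kernels of $\mathbf L^2$ and $\mathbf L^3$ by solving the linear systems explicitly.

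\textbf{Step 1: the chain relations.} For $\mathbf\Phi_1=(0,(1-y)y,0,1)$ we have $\theta_y=1-2y$, $\theta_y(1)=-1$ and $\eta=1$, so $\theta_y(1)+\eta=0$. The first component of $\mathbf L\mathbf\Phi_1$ is then $1-2y+(2y-1)=0$. Since $\Phi=Z=0$, the second and fourth components vanish, and the third is $-\theta_y(1)-\eta=0$. This was already observed in Lemma~\ref{lemnece}, which also shows $\ker\mathbf L=\operatorname{span}\{\mathbf\Phi_1\}$, so the geometric multiplicity is one.

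For $\mathbf\Phi_2=(0,0,1/3,0)$, which lies in $\operatorname{dom}(\mathbf L)$ trivially, the only nonzero inputs are $\Phi(1)+Z=1/3$. This gives second component $3\cdot\frac13(1-y)y=(1-y)y$, third component $0$, fourth component $3\cdot\frac13=1$, and first component $0$. Hence $\mathbf L\mathbf\Phi_2=\mathbf\Phi_1$.

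\textbf{Step 2: $\ker\mathbf L^2=\operatorname{span}\{\mathbf\Phi_1,\mathbf\Phi_2\}$.} If $\mathbf L^2u=0$, then $\mathbf Lu=c\mathbf\Phi_1$, so $u-c\mathbf\Phi_2\in\ker\mathbf L=\operatorname{span}\{\mathbf\Phi_1\}$.

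\textbf{Step 3: the chain stops, i.e.\ $\ker\mathbf L^3=\ker\mathbf L^2$.} This is the main point of the proof, and it reduces to showing that $\mathbf\Phi_2\notin\operatorname{ran}\mathbf L$. Suppose $\mathbf Lu=\mathbf\Phi_2$ with $u=(\Phi,\theta,Z,\eta)$. Reading off the components:
\begin{itemize}
\item the fourth gives $\Phi(1)+Z=0$;
\item the second then gives $\Phi_y=0$, so $\Phi\equiv0$ by the mean-zero condition, and hence $Z=0$;
\item the third gives $-\theta_y(1)-\eta=1/3$;
\item the first gives $\theta_y+\frac{1}{2\sigma_0}(\theta_y(1)+\eta)(\frac13-y^2)+\eta(2y-1)=0$, that is,
\[
\theta_y=\frac{1}{6\sigma_0}\Bigl(\frac13-y^2\Bigr)-\eta(2y-1).
\]
\end{itemize}
Integrating this over $(0,1)$ and using $\theta(0)=\theta(1)=0$, the left side integrates to zero and so does the $\eta$ term, while $\int_0^1(\frac13-y^2)\,dy=0$; so there is no contradiction here. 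Evaluating at $y=1$ instead gives $\theta_y(1)=-\frac{1}{9\sigma_0}-\eta$. Substituting into the third relation yields $\frac{1}{9\sigma_0}=\frac13$, i.e.\ $\sigma_0=\frac13$, which contradicts $\sigma_0>1/3$. Hence $\mathbf\Phi_2\notin\operatorname{ran}\mathbf L$.

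Now let $\mathbf L^3u=0$. Then $\mathbf L u\in\ker\mathbf L^2$, so $\mathbf Lu=a\mathbf\Phi_1+b\mathbf\Phi_2$. Since $a\mathbf\Phi_1=\mathbf L(a\mathbf\Phi_2)$, we get $\mathbf L(u-a\mathbf\Phi_2)=b\mathbf\Phi_2$, and Step 3 forces $b=0$. Therefore $u\in\ker\mathbf L^2$.

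\textbf{Step 4: the algebraic multiplicity is two.} Since $\ker\mathbf L^k$ stabilizes at $k=2$ and $0$ is an isolated eigenvalue whose spectral projection $\mathbf P_1$ has finite rank, the generalized eigenspace $M_1=\mathbf P_1M_0$ equals $\ker\mathbf L^2$ (Kato's theory). It is therefore two-dimensional, which proves the algebraic multiplicity claim.
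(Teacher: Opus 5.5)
Your proof is correct and takes essentially the same route as the paper. You verify $\mathbf L\mathbf\Phi_1=0$ and $\mathbf L\mathbf\Phi_2=\mathbf\Phi_1$ directly from \eqref{eq:center-L-formula}, use $\ker\mathbf L=\operatorname{span}\{\mathbf\Phi_1\}$, and show that $\mathbf Lu=\mathbf\Phi_2$ has no solution; you also fill in what the paper leaves implicit, namely that evaluating at $y=1$ forces $\sigma_0=\frac13$, and that the Kato identification $\mathbf P_1M_0=\ker\mathbf L^2$ gives algebraic multiplicity two.
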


\begin{proof}
By \eqref{eq:center-L-formula}, \(\ker\mathbf L=\operatorname{span}\{\mathbf\Phi_1\}\).
Solving \(\mathbf L v=\mathbf\Phi_1\) gives
\(v=\mathbf\Phi_2+c\mathbf\Phi_1\), and the equation
\(\mathbf L v=\mathbf\Phi_2\) has no solution. Hence the Jordan chain stops at
length two, and \eqref{eq:L-kernel-chain} follows.
\end{proof}

By \eqref{eq:center-Omega-tilde}, the constant symplectic form
\(\Psi=\tilde\Omega^0_0\) is
\begin{equation}\label{PSI}
\begin{aligned}
\Psi[u^1,u^2]
&=
\left[
\left(3\sigma_0-1+\frac{\omega_0}{4}\right)(\Phi^2(1)+Z^2)
+\Phi^2(1)-2\omega_0\int_0^1y\Phi^2\,dy
\right]\eta^1 \\
&\quad
-\left[
\left(3\sigma_0-1+\frac{\omega_0}{4}\right)(\Phi^1(1)+Z^1)
+\Phi^1(1)-2\omega_0\int_0^1y\Phi^1\,dy
\right]\eta^2 \\
&\quad
+\int_0^1\theta^2_y
\left(\Phi^1-\frac32\left(y^2-\frac13\right)(\Phi^1(1)+Z^1)\right)\,dy \\
&\quad
-\int_0^1\theta^1_y
\left(\Phi^2-\frac32\left(y^2-\frac13\right)(\Phi^2(1)+Z^2)\right)\,dy,
\end{aligned}
\end{equation}
where \(u^j=(\Phi^j,\theta^j,Z^j,\eta^j)\). In particular,
\begin{equation}\label{eq:Psi-Phi1-Phi2}
        \Psi[\mathbf\Phi_1,\mathbf\Phi_2]
        =
        \sigma_0-\frac13>0 .
\end{equation}
Thus
\begin{equation}\label{e f}
        \mathsf e
        =
        \left(\sigma_0-\frac13\right)^{-1/2}\mathbf\Phi_1,
        \qquad
        \mathsf f
        =
        \left(\sigma_0-\frac13\right)^{-1/2}\mathbf\Phi_2
\end{equation}
is a symplectic basis of \(M_1=\mathbf P_1M_0\). We identify \(M_1\) with
\(\mathbb R^2\) by
\begin{equation}\label{M_1_identification}
        (q,p)\longmapsto q\mathsf e+p\mathsf f.
\end{equation}
In these coordinates, the reduced symplectic form is canonical:
\begin{equation}\label{eq:canonical-Upsilon}
        \Upsilon[v^1,v^2]=q^1p^2-p^1q^2,
        \qquad
        v^j=(q^j,p^j).
\end{equation}

Let \(\tilde r\) be the Darboux-center-manifold graph from
Theorem~\ref{thm:Mielke_center_Darboux}. The reduced Hamiltonian is
\begin{equation}\label{reduced Hamiltonian}
        \bar H^{\varepsilon_1}(q,p)
        =
        \hat H^{\varepsilon_1}
        \bigl(q\mathsf e+p\mathsf f+\tilde r(q,p,\varepsilon_1)\bigr).
\end{equation}
The reverser acts by \(S(q,p)=(q,-p)\). Since the origin is an equilibrium for
all small \(\varepsilon_1\), we have \(\tilde r(0,\varepsilon_1)=0\). Together
with \(d_1\tilde r(0,0)=0\), this gives
\begin{align}
        \|\tilde r(q,p,\varepsilon_1)\|
        &=O(|(q,p)|\,|(q,p,\varepsilon_1)|), \label{estr}\\
        \|d_1\tilde r(q,p,\varepsilon_1)\|
        &=O(|(q,p,\varepsilon_1)|). \label{eq:dr-estimate}
\end{align}
Moreover, by \eqref{eq:center-Htilde-def} and
\(\hat H^{\varepsilon_1}=\tilde H^{\varepsilon_1}\circ\jmath\),
\(\hat H^{\varepsilon_1}\) is polynomial in \(\varepsilon_1\), and we write
\begin{equation}\label{expansion of hat H}
        \hat H^{\varepsilon_1}
        =
        \hat H^0+\varepsilon_1\hat H^1+\varepsilon_1^2\hat H^2 .
\end{equation}

The following coefficient identities are the point at which the KdV normal form
enters the argument. They identify the quadratic kinetic term, the cubic
nonlinearity, and the parameter term generated by the critical shear.

\begin{lemma}[Cubic Hamiltonian coefficients]\label{Lem coff}
For \(v'' = (q'',p'') = q'' \mathsf{e} + p'' \mathsf{f}\),
\(v' = (q',p') = q' \mathsf{e} + p' \mathsf{f}\), and
\(v = (q,p) = q \mathsf{e} + p \mathsf{f}\), we have
\begin{align}
    \label{d2H00}
    d^2 \hat{H}^{0}_0[v',v] &= p' p, \\
    \label{d3H00}
    d^3 \hat{H}^{0}_0[v'',v', v]
    &= - \left( \sigma_0 - \frac{1}{3}\right)^{-\frac{3}{2}} q'' q' q
    - \frac{3}{2}\left( \sigma_0 - \frac{1}{3}\right)^{-\frac{1}{2}}
    \left(  q'' p' p + p'' p'q + p'' q' p \right), \\
    \label{d2H10}
    d^2 \hat{H}^{1}_0[v',v]
    &= -\left( \sigma_0 - \frac{1}{3}\right)^{-1} q'q.
\end{align}
\end{lemma}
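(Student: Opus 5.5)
The plan is to compute all three identities directly from the explicit formula \eqref{eq:center-Htilde-formula} for \(\tilde H^{\varepsilon}\), with \(\omega_0=1\), \(\varepsilon_2=0\), restricted to the two-dimensional center space. The one exception is \eqref{d2H00}, where Lemma~\ref{lemequ} does the work. Throughout write \(s:=\sigma_0-\frac13\). By \eqref{eq:Phi-chain} and \eqref{e f}, a vector \(v=q\mathsf e+p\mathsf f\) has components
\[
\Phi=0,\qquad \theta=s^{-1/2}q\,(1-y)y,\qquad Z=\tfrac13 s^{-1/2}p,\qquad \eta=s^{-1/2}q .
\]
Since the derivatives in the statement are Fréchet derivatives of \(\hat H^{\varepsilon_1}\) at \(0\) in directions lying in \(M_1\subset\operatorname{dom}(\mathbf L)\), no information about \(\tilde r\) is needed. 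It suffices to expand the scalar function \(t\mapsto \hat H^{\varepsilon_1}(tv)\), or its polarized version, to third order and read off the coefficients.

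For \eqref{d2H00} I would use Lemma~\ref{lemequ}: \(d^2\hat H^0_0[v',v]=\Psi[\mathbf L v',v]\). From \(\mathbf L\mathbf\Phi_1=0\) and \(\mathbf L\mathbf\Phi_2=\mathbf\Phi_1\) we get \(\mathbf L\mathsf e=0\) and \(\mathbf L\mathsf f=\mathsf e\), so \(\mathbf Lv'=p'\mathsf e\). Hence \(\Psi[p'\mathsf e,q\mathsf e+p\mathsf f]=p'p\,\Psi[\mathsf e,\mathsf f]=p'p\), by \eqref{eq:Psi-Phi1-Phi2} and the normalization \eqref{e f}.

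For the cubic term I would first expand \(R\) from \eqref{eq:center-R-Zp-def} on \(M_1\). Since \(\Phi=0\),
\[
R=\frac{-Z}{\theta_{\mathrm{corr}}(1)-\frac13}=3Z\bigl(1+3\theta_{\mathrm{corr}}(1)+O(|\theta|^2)\bigr).
\]
Here \(\theta_{\mathrm{corr}}(1)\) is linear in \(\theta\) and can be computed explicitly for \(\theta=(1-y)y\); this gives \(R=s^{-1/2}p\) at first order plus a \(qp\) correction at second order. I would then insert these expansions into \eqref{eq:center-Htilde-formula}, term by term:
\begin{enumerate}[label=(\roman*)]
\item \(\frac{1}{2(1+\eta)}\int_0^1\bigl(\theta_y^2-R^2y^2(\theta_y-1)^2\bigr)\,dy\), which produces cubic terms of types \(q^3\) and \(qp^2\);
\item \(\frac{\eta^2}{2(1+\eta)}\), which gives \(-\frac12\eta^3\) at cubic order;
\item the vorticity terms \(\bigl(\frac1{1+\eta}-(1+\eta)\bigr)\int_0^1\theta\,dy\) together with the explicit \(\eta\)-polynomials;
\item \(\sigma_0-\sigma_0(1+R^2)^{-1/2}=\frac{\sigma_0}2R^2+O(R^4)\), whose cubic part comes from the \(qp\)-correction in \(R^2\).
\end{enumerate}
Collecting the \(q^3\) and \(qp^2\) coefficients and multiplying by \(6\) and \(2\) respectively for the trilinear polarization should give \eqref{d3H00}. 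As a consistency check, reversibility under \(S(q,p)=(q,-p)\) forces the absence of \(p^3\) and \(q^2p\) terms.

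For \eqref{d2H10} I would differentiate \eqref{eq:center-Htilde-formula} once in \(\varepsilon_1\) at \(\varepsilon_1=0\). The resulting \(\hat H^1\) is built from \(\bigl(\frac1{1+\eta}-(1+\eta)\bigr)\int_0^1\theta\,dy\), \(\frac{\omega}{3(1+\eta)}\), \(\frac{\eta}{2(1+\eta)}\), \(\frac13\eta\), \(-\frac13\), and \(-\frac12\eta\), with \(\omega=1\) substituted. \(R\) is independent of \(\varepsilon_1\), so only \(q\)-dependence survives. I would take the quadratic part in \((\theta,\eta)\) restricted to \(M_1\), use \(\int_0^1(1-y)y\,dy=\frac16\), and check that the \(q^2\) coefficient equals \(-\frac12 s^{-1}\).

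The main obstacle is the bookkeeping in the cubic coefficient. It requires the second-order expansion of \(R\) through \(\theta_{\mathrm{corr}}(1)\), and there are cancellations among the vorticity terms that must produce exactly \(-s^{-3/2}\) for \(q^3\) and \(-\frac32 s^{-1/2}\) for the mixed terms. I would carry this out by writing \(\hat H^0(q\mathsf e+p\mathsf f)\) as an explicit polynomial in \((q,p)\) up to degree three, moving the lengthy integrals to Section~\ref{sec:App1}. As a sanity check I would compare the linear long-wave balance with the dispersion relation \eqref{eq:intro-dispersion}.
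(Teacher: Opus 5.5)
\textbf{The approach is sound, but the claimed match with \eqref{d3H00} fails.} Your method is the paper's method: Taylor-expand $\tilde H^{\varepsilon}$ from \eqref{eq:center-Htilde-formula} and substitute $q\mathsf e+p\mathsf f$. Several pieces are correct:
\begin{itemize}
\item Your shortcut for \eqref{d2H00} is cleaner than the paper's explicit Hessian: you use Lemma~\ref{lemequ} together with $\mathbf L\mathsf e=0$, $\mathbf L\mathsf f=\mathsf e$ and $\Psi[\mathsf e,\mathsf f]=1$.
\item Your $\hat H^1$ bookkeeping correctly gives $-2\eta\int_0^1\theta\,dy-\frac16\eta^2=-\frac12 s^{-1}q^2$, which is \eqref{d2H10}.
\end{itemize}
The gap is the unchecked sentence that collecting the cubic terms ``should give \eqref{d3H00}''. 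It does not.

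\textbf{What your expansion actually gives.} Write $s=\sigma_0-\frac13$, $a=s^{-1/2}q$, $b=s^{-1/2}p$. On $M_1$:
\begin{itemize}
\item $\theta_{\mathrm{corr}}(1)=\int_0^1y^2\theta_y\,dy=-a/6$, so $R=2b/(2+a)$ exactly.
\item $\int_0^1y^2(\theta_y-1)^2\,dy=\frac{1+a}{3}+\frac{2a^2}{15}$.
\item The $R$-free terms of \eqref{eq:center-Htilde-formula} sum exactly to $-a^3/(6(1+a))$.
\item The $R$-dependent terms equal $\frac s2R^2+O(|(a,b)|^4)$, with $R^2=b^2-ab^2+O(|(a,b)|^4)$.
\end{itemize}
Hence
\[
\hat H^0(q\mathsf e+p\mathsf f)=\frac12p^2-\frac16s^{-3/2}q^3-\frac12s^{-1/2}qp^2+O(|(q,p)|^4).
\]
This gives $d^3\hat H^0_0[\mathsf e,\mathsf e,\mathsf e]=-s^{-3/2}$, as stated. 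But it gives $d^3\hat H^0_0[\mathsf e,\mathsf f,\mathsf f]=-s^{-1/2}$, not $-\frac32s^{-1/2}$. The mixed terms come only from the $R$-terms, not from vorticity cancellations.

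\textbf{Consequence.} The mixed coefficient in \eqref{d3H00} is wrong, so no argument can prove that identity as written. The correct polarization is
\[
d^3\hat H^0_0[v'',v',v]=-s^{-3/2}q''q'q-s^{-1/2}\left(q''p'p+p''p'q+p''q'p\right).
\]
The paper's own general cubic form (in its proof) confirms this. With $\Phi=0$, the only surviving mixed terms are
\[
9\eta Z^2+54Z^2\Big(\int_0^1y^2\theta_y\,dy-\theta_{\mathrm{corr}}(1)\Big)+162\sigma_0Z^2\theta_{\mathrm{corr}}(1)=(1-3\sigma_0)ab^2=-3s^{-1/2}qp^2 .
\]
The listed value $-\frac32 s^{-1/2}$ looks like this $qp^2$ coefficient divided by $2$ rather than $3$.

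\textbf{The slip is harmless downstream.} \eqref{eq:dH-e}, \eqref{eq:dH-f} and Lemma~\ref{est deri} use only $d^2\hat H^0_0[\mathsf e,\cdot]=0$, $d^3\hat H^0_0[\mathsf e,\mathsf e,\mathsf e]$, $d^3\hat H^0_0[\mathsf e,\mathsf e,\mathsf f]=0$ (by reversibility) and $d^2\hat H^1_0[\mathsf e,\mathsf e]$. The value of $d^3\hat H^0_0[\mathsf e,\mathsf f,\mathsf f]$ enters only the remainders $\mathcal R_1,\mathcal R_2$.

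You should carry out the expansion you describe and prove the corrected mixed coefficient, or restrict the lemma to the contractions actually used. Do not assert agreement with \eqref{d3H00}.
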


Differentiating \eqref{reduced Hamiltonian}, using
\eqref{estr}--\eqref{eq:dr-estimate}, Lemma~\ref{lemequ}, and the coefficient
computations in Lemmas~\ref{lemma A1} and~\ref{Lem coff}, gives
\begin{align}
d\bar H^{\varepsilon_1}(q,p)[\mathsf e]
&=
-\varepsilon_1\left(\sigma_0-\frac13\right)^{-1}q
-\frac12\left(\sigma_0-\frac13\right)^{-3/2}q^2
\nonumber\\
&\quad
+O(|p|\,|(q,p,\varepsilon_1)|)
+O(|(q,p)|\,|(q,p,\varepsilon_1)|^2),
\label{eq:dH-e}\\
d\bar H^{\varepsilon_1}(q,p)[\mathsf f]
&=
p+O(|(q,p)|\,|(q,p,\varepsilon_1)|).
\label{eq:dH-f}
\end{align}
Therefore Hamilton's equations for
\((\mathcal M,\Upsilon,\bar H^{\varepsilon_1})\) take the form
\begin{equation}\label{q_x p_x}
\begin{aligned}
\dot q
&=
p+\mathcal R_1(q,p,\varepsilon_1),\\
\dot p
&=
\left(\sigma_0-\frac13\right)^{-1}\varepsilon_1 q
+\frac12\left(\sigma_0-\frac13\right)^{-3/2}q^2
+\mathcal R_2(q,p,\varepsilon_1),
\end{aligned}
\end{equation}
where reversibility gives \(\mathcal R_1\) odd and \(\mathcal R_2\) even in
\(p\), and
\begin{align}
        \mathcal R_1
        &=
        O(|(q,p)|\,|(q,p,\varepsilon_1)|), \label{mathcal R1}\\
        \mathcal R_2
        &=
        O(|p|\,|(q,p,\varepsilon_1)|)
        +O(|(q,p)|\,|(q,p,\varepsilon_1)|^2). \label{mathcal R2}
\end{align}

Introduce the long-wave variables
\begin{equation}\label{eq:QPX-scaling-main}
        Q(\bar x)
        =
        \frac{q(x)}{\varepsilon_1\sqrt{\sigma_0-\frac13}},
        \qquad
        P(\bar x)
        =
        \frac{p(x)}{\varepsilon_1^{3/2}},
        \qquad
        \bar x
        =
        \sqrt{\varepsilon_1}\left(\sigma_0-\frac13\right)^{-1/2}x .
\end{equation}
Then \eqref{q_x p_x} becomes
\begin{equation}\label{Q P varepsilon}
\begin{aligned}
Q_{\bar x}
&=
P+
\varepsilon_1^{-3/2}
\mathcal R_1\!\left(
\varepsilon_1\sqrt{\sigma_0-\frac13}\,Q,
\varepsilon_1^{3/2}P,\varepsilon_1
\right),\\
P_{\bar x}
&=
Q+\frac12Q^2+
\varepsilon_1^{-2}\sqrt{\sigma_0-\frac13}\,
\mathcal R_2\!\left(
\varepsilon_1\sqrt{\sigma_0-\frac13}\,Q,
\varepsilon_1^{3/2}P,\varepsilon_1
\right).
\end{aligned}
\end{equation}
Set \(\delta=\varepsilon_1^{1/4}\), and define
\begin{align}
\overline{\mathcal R}_1(Q,P,\delta)
&:=
\delta^{-6}
\mathcal R_1\!\left(
\delta^4\sqrt{\sigma_0-\frac13}\,Q,\delta^6P,\delta^4
\right),
\label{bar R1}\\
\overline{\mathcal R}_2(Q,P,\delta)
&:=
\delta^{-8}
\mathcal R_2\!\left(
\delta^4\sqrt{\sigma_0-\frac13}\,Q,\delta^6P,\delta^4
\right).
\label{bar R2}
\end{align}
The rescaled system is
\begin{equation}\label{Q P varepsilon 2}
\begin{aligned}
Q_{\bar x}
&=
P+\overline{\mathcal R}_1(Q,P,\delta),\\
P_{\bar x}
&=
Q+\frac12Q^2
+\sqrt{\sigma_0-\frac13}\,\overline{\mathcal R}_2(Q,P,\delta).
\end{aligned}
\end{equation}

\begin{lemma}\label{R1 and R2 are C1}
The functions \(\overline{\mathcal R}_1\) and \(\overline{\mathcal R}_2\) are
\(C^1\) in \((Q,P,\delta)\) for \(\delta\) sufficiently small. Moreover, after
defining their values at \(\delta=0\) to be zero, one has
\begin{equation}\label{eq:barR-C1-small}
        \overline{\mathcal R}_j(Q,P,0)=0,
        \qquad
        d\overline{\mathcal R}_j(Q,P,0)=0,
        \qquad j=1,2,
\end{equation}
locally uniformly for \((Q,P)\) in bounded sets.
\end{lemma}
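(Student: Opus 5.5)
The plan is to use Taylor's theorem with integral remainder to write the remainders $\mathcal R_1,\mathcal R_2$ as explicit products of the small variables and continuous (in fact $C^{k-2}$) coefficient functions. The bounds \eqref{mathcal R1}--\eqref{mathcal R2} alone are not enough for $C^1$ control after rescaling. The structural input is that $\mathcal R_1,\mathcal R_2$ are $C^{k-1}$ with $k\ge 2$ large (we may take $k$ as large as we like, since $\mathbf B$ is smooth). Together with the vanishing information encoded in \eqref{eq:dH-e}--\eqref{eq:dH-f} and reversibility ($\mathcal R_1$ odd, $\mathcal R_2$ even in $p$), this yields representations
\begin{equation*}
\mathcal R_1(q,p,\varepsilon_1)=q^2a_1+qp\,a_2+p^2a_3+\varepsilon_1 q\,a_4+\varepsilon_1p\,a_5,
\end{equation*}
with $a_j$ continuous functions of $(q,p,\varepsilon_1)$. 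Oddness in $p$ can be used to improve the $q^2$ and $\varepsilon_1q$ terms to carry a factor $p$; if needed, I will instead rely on the fact that, after the long-wave scaling, $\varepsilon_1q\sim\delta^8$ and $q^2\sim\delta^8$, which beats $\delta^{6}$. Similarly,
\begin{equation*}
\mathcal R_2=p\,(q b_1+p b_2+\varepsilon_1 b_3)+q^3b_4+\varepsilon_1q^2b_5+\varepsilon_1^2q\,b_6.
\end{equation*}
These expansions are obtained by Hadamard's lemma applied repeatedly, using that the $q$-only Taylor terms up to order two, and the terms $\varepsilon_1 q$, are exactly the ones extracted in \eqref{q_x p_x}.

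Next, substitute the scaling $q=\delta^4\sqrt{\sigma_0-\frac13}\,Q$, $p=\delta^6P$, $\varepsilon_1=\delta^4$ into \eqref{bar R1}--\eqref{bar R2}. Each monomial above acquires a power of $\delta$ strictly larger than the normalizing factor: $\delta^{8}$, $\delta^{10}$, $\delta^{12}$ versus $\delta^{6}$ in $\overline{\mathcal R}_1$, and $\delta^{12}$ (terms $qp$, $\varepsilon_1 p$), $\delta^{12}$ (cubic terms) versus $\delta^{8}$ in $\overline{\mathcal R}_2$. Hence $\overline{\mathcal R}_1=\delta^2\,G_1(Q,P,\delta)$ and $\overline{\mathcal R}_2=\delta^4\,G_2(Q,P,\delta)$. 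Here $G_j$ are polynomials in $(Q,P,\delta)$ with coefficients given by $a_j,b_j$ evaluated at $(\delta^4 cQ,\delta^6P,\delta^4)$. These are continuous, bounded on bounded sets, and their $(Q,P)$-derivatives carry extra positive powers of $\delta$.

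To obtain $C^1$ regularity in $\delta$ at $\delta=0$, I will differentiate $\delta^2G_1$ in $\delta$. The derivative of the coefficient $a_j(\delta^4cQ,\delta^6P,\delta^4)$ with respect to $\delta$ is $O(\delta^3)$ times derivatives of $a_j$, which are continuous since $a_j\in C^{k-3}$ with $k\ge4$. So $\partial_\delta(\delta^2G_1)=O(\delta)$, locally uniformly, and it tends to $0$. Derivatives in $(Q,P)$ are $O(\delta^2)$. Defining the value at $\delta=0$ as $0$, the difference quotient at $\delta=0$ is $\delta G_1\to0$. Thus $\overline{\mathcal R}_j$ is $C^1$ on $B\times(-\delta_0,\delta_0)$ with $\overline{\mathcal R}_j(\cdot,\cdot,0)=0$ and $d\overline{\mathcal R}_j(\cdot,\cdot,0)=0$, locally uniformly on bounded sets $B$. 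For $\delta\neq0$, $C^1$ regularity is immediate from the smoothness of $\mathcal R_j$ away from the singular scaling. I will also restrict to $\delta>0$, or use $\varepsilon_1=\delta^4\ge0$, matching Theorem~\ref{thm:main}.

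The main obstacle is the first step: justifying the precise factored form of $\mathcal R_2$, in particular that no pure $q^2\varepsilon_1$-free quadratic term other than $\frac12(\sigma_0-\frac13)^{-3/2}q^2$ and no $\varepsilon_1 q$ term survive, and that the remaining pure-$q$ terms are genuinely cubic. These facts come from the exactness of the coefficients in Lemma~\ref{Lem coff}: \eqref{d2H00} gives no $q$-quadratic part in $\hat H^0$, \eqref{d3H00} gives the $q^3$ coefficient, and \eqref{d2H10} gives the $\varepsilon_1q^2$ coefficient. In addition, \eqref{estr}--\eqref{eq:dr-estimate} show that the center-manifold correction $\tilde r$ contributes to $\bar H^{\varepsilon_1}$ only at order $|(q,p)|^2|(q,p,\varepsilon_1)|$ beyond what is recorded. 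This requires checking that $\tilde r$ is itself $O(|(q,p)|\,|(q,p,\varepsilon_1)|)$ and that, by Lemma~\ref{lemequ}, its quadratic-in-$\tilde r$ contribution through $d^2\hat H^0_0$ vanishes to sufficient order, since $\tilde r\in M_2$ and $\Psi$ pairs $M_1$ and $M_2$ trivially. I would carry out this check first, since any missed term of size $q^2$ or $\varepsilon_1 q$ would destroy the $\delta^{8}$ normalization in \eqref{bar R2}.
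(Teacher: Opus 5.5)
Your proposal is correct, but it takes a somewhat different route from the paper.

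\textbf{The paper's route.} The paper's proof is a direct chain-rule estimate. It takes the pointwise bounds \eqref{mathcal R1}--\eqref{mathcal R2} and the separately proved first-derivative bounds of Lemma~\ref{est deri}. The latter are obtained in the appendix by differentiating $\hat H^{\varepsilon_1}(v+\tilde r(v,\varepsilon_1))$ and using the cancellations from Lemmas~\ref{lemequ} and~\ref{Lem coff}. The paper then inserts $q=\delta^4\sqrt{\sigma_0-\frac13}\,Q$, $p=\delta^6P$, $\varepsilon_1=\delta^4$ into $\partial_Q,\partial_P,\partial_\delta$ of $\delta^{-6}\mathcal R_1$ and $\delta^{-8}\mathcal R_2$, and checks that each is $O(\delta)$.

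\textbf{Your route.} You encode the same low-order Taylor information once, in a Hadamard factorization with $C^1$ coefficients. Both the size and the derivative bounds after scaling can then be read off by inspection. This costs a few more derivatives of $\mathcal R_j$, which is harmless since $k$ is at your disposal, and it makes Lemma~\ref{est deri} unnecessary.

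In fact, the ``main obstacle'' you flag is already settled by \eqref{mathcal R1}--\eqref{mathcal R2}. Take a sufficiently smooth function obeying those bounds. It must vanish on $\{q=p=0\}$. For $\mathcal R_1$, its $(q,p)$-linear part at the origin must vanish. For $\mathcal R_2$, its $q$, $p$, $q^2$ and $q\varepsilon_1$ Taylor coefficients must vanish. These are exactly the conditions your Hadamard argument needs, so it runs directly from the stated bounds without redoing the coefficient computation.

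\textbf{Two small corrections.}

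First, in $\overline{\mathcal R}_2$ the monomials $qp$ and $\varepsilon_1 p$ scale like $\delta^{10}$, not $\delta^{12}$. Your displayed representation therefore gives $\overline{\mathcal R}_2=\delta^2G_2$, not $\delta^4G_2$. You would get $\delta^4$ only after using evenness in $p$ to make the $p$-dependent part $O(p^2)$. Of course $\delta^2$ is already enough for the conclusion.

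Second, after the Darboux change, $\tilde r$ need not be $M_2$-valued, since it contains the $M_1$-valued correction coming from $\Theta$. So $\Psi$-orthogonality of $M_1$ and $M_2$ is not the right justification. What you actually use is
\[
d^2\hat H^0_0[q\mathsf e+p\mathsf f,w]=\Psi[\mathbf L(q\mathsf e+p\mathsf f),w]=p\,\Psi[\mathsf e,w],
\]
which holds because $\mathbf L\mathsf e=0$ and $\mathbf L\mathsf f=\mathsf e$. This places every cross term with $\tilde r$ in the $p(\cdots)$ part of $\mathcal R_2$, as your factorization requires.
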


\begin{proof}
For \(\delta\neq0\), this follows from the smoothness of
\(\mathcal R_1,\mathcal R_2\). The estimates
\eqref{mathcal R1}--\eqref{mathcal R2} and Lemma~\ref{est deri} imply that
\(\overline{\mathcal R}_j(Q,P,\delta)\to0\) and
\(d\overline{\mathcal R}_j(Q,P,\delta)\to0\) as \(\delta\to0\), for
\(j=1,2\). Defining the values at \(\delta=0\) to be zero gives the claimed
\(C^1\)-regularity and \eqref{eq:barR-C1-small}.
\end{proof}

Thus \eqref{Q P varepsilon 2} is a \(C^1\)-small reversible perturbation of the
limiting KdV system as \(\delta\to0\). This is the only perturbative input
needed below to persist the reversible homoclinic orbit.

At \(\delta=0\), \eqref{Q P varepsilon 2} reduces to
\begin{equation}\label{reduced_ode}
        Q_{\bar x}=P,\qquad
        P_{\bar x}=Q+\frac12Q^2 .
\end{equation}
Equivalently,
\begin{equation}\label{eq:stationary-KdV}
        Q''=Q+\frac12Q^2 .
\end{equation}
The Hamiltonian
\begin{equation}\label{eq:KdV-Hamiltonian}
        \mathscr H(Q,P)
        =
        \frac12P^2-\frac12Q^2-\frac16Q^3
\end{equation}
is conserved. On the zero-energy level,
\(P^2=Q^2(1+Q/3)\), and the homoclinic orbit to \((0,0)\) is
\begin{equation}\label{eq:KdV-homoclinic}
        Q(\bar x)=-3\sech^2\left(\frac{\bar x}{2}\right),
        \qquad
        P(\bar x)=3\sech^2\left(\frac{\bar x}{2}\right)
        \tanh\left(\frac{\bar x}{2}\right).
\end{equation}

By Lemma~\ref{R1 and R2 are C1}, the local stable and unstable manifolds of
\((0,0)\) for \eqref{Q P varepsilon 2} vary continuously in \(\delta\); see
\cite{Yi1993}. Hence, for \(\delta>0\) sufficiently small, the stable manifold
has a branch close to the homoclinic branch in \eqref{eq:KdV-homoclinic}. This
branch intersects the fixed-point set \(\{P=0\}\) of the reverser. Translating
in \(\bar x\), let the intersection occur at \(\bar x=0\). By reversibility,
\((Q^\delta(-\bar x),-P^\delta(-\bar x))\) is a solution with the same initial
condition at \(\bar x=0\); uniqueness gives
\(Q^\delta(\bar x)=Q^\delta(-\bar x)\) and
\(P^\delta(\bar x)=-P^\delta(-\bar x)\). Since the solution tends to \(0\) as
\(\bar x\to+\infty\), it also tends to \(0\) as \(\bar x\to-\infty\). Thus
\eqref{Q P varepsilon 2} has a reversible homoclinic orbit for all sufficiently
small \(\delta>0\).

\subsection{Homoclinic persistence and physical reconstruction}

The homoclinic orbit constructed above gives, through
Theorem~\ref{thm:Mielke_center_Darboux}, a homoclinic orbit of
\eqref{eq:center-v-L-plus-B}. We then lift this orbit through the Darboux
coordinates, the fixed-domain transformation, and finally the reconstruction
result of Theorem~\ref{thm:Hamiltonian_equivalence}. This produces a solitary
traveling wave of the original free-boundary Euler system, rather than merely a
solution of the reduced normal form. It remains to record the leading-order
profile in physical variables.

Let \(\xi_{\mathrm{phys}}=x-ct\) be the physical traveling coordinate and let
\(d>0\) be the depth. We nondimensionalize by
\begin{equation}\label{eq:nondim-vars}
        \xi=\frac{\xi_{\mathrm{phys}}}{d},
        \qquad
        y=\frac{y_{\mathrm{phys}}}{d},
        \qquad
        \eta=\frac{\eta_{\mathrm{phys}}}{d},
\end{equation}
and
\begin{equation}\label{eq:nondim-params}
        \omega=\frac{\omega_{\mathrm{phys}}d}{c},
        \qquad
        \sigma=\frac{\sigma_{\mathrm{phys}}}{c^2d}.
\end{equation}
Thus
\begin{equation}\label{eq:eta-phys-from-dimless}
        \eta_{\mathrm{phys}}(\xi_{\mathrm{phys}})
        =
        d\,\eta\!\left(\frac{\xi_{\mathrm{phys}}}{d}\right).
\end{equation}
The small parameter is
\begin{equation}\label{eq:eps-def}
        \frac{\omega_{\mathrm{phys}}d}{c}=1+\varepsilon,
\end{equation}
and the capillary regime is
\begin{equation}\label{eq:sigma-regime}
        \frac{\sigma_{\mathrm{phys}}}{c^2d}>\frac13.
\end{equation}

In the normalized variables, \(M_1=\operatorname{span}\{\mathbf\Phi_1,\mathbf\Phi_2\}\),
with \(\mathbf\Phi_1,\mathbf\Phi_2\) given in \eqref{eq:Phi-chain}. Let
\(\Pi_\eta(\Phi,\theta,Z,\eta)=\eta\). Then
\begin{equation}\label{eq:eta-projection}
        \Pi_\eta(\mathbf\Phi_1)=1,
        \qquad
        \Pi_\eta(\mathbf\Phi_2)=0.
\end{equation}
Using the symplectic basis \eqref{e f}, we have
\begin{equation}\label{eq:Pi-eta-of-e-f}
        \Pi_\eta(\mathsf e)=\left(\sigma_0-\frac13\right)^{-1/2},
        \qquad
        \Pi_\eta(\mathsf f)=0.
\end{equation}

On the center manifold, the full state is
\begin{equation}\label{eq:center-graph}
        m(x)=q(x)\mathsf e+p(x)\mathsf f
        +\tilde r(q(x),p(x),\varepsilon_1).
\end{equation}
By \eqref{estr},
\begin{equation}\label{eq:r-estimate}
        \|\tilde r(q,p,\varepsilon_1)\|
        =
        O(|(q,p)|\,|(q,p,\varepsilon_1)|).
\end{equation}
Projecting \eqref{eq:center-graph} onto the surface component gives
\begin{equation}\label{eq:eta-q-leading}
        \eta(x)
        =
        \frac{q(x)}{\sqrt{\sigma_0-\frac13}}
        +\Pi_\eta\tilde r(q(x),p(x),\varepsilon_1).
\end{equation}

The long-wave scaling \eqref{eq:QPX-scaling-main} gives
\begin{equation}\label{eq:qPiEtae}
        q(x)=\varepsilon_1\sqrt{\sigma_0-\frac13}\,Q(\bar x),
        \qquad
        p(x)=\varepsilon_1^{3/2}P(\bar x).
\end{equation}
Hence \(|(q,p)|=O(\varepsilon_1)\) along the homoclinic orbit, and
\eqref{eq:r-estimate} gives
\begin{equation}\label{eq:r-order}
        \Pi_\eta\tilde r(q(x),p(x),\varepsilon_1)
        =
        O(\varepsilon_1^2).
\end{equation}
Combining \eqref{eq:eta-q-leading}--\eqref{eq:r-order}, we obtain the
nondimensional expansion
\begin{equation}\label{eq:eta-dimless-asympt}
        \eta(x)
        =
        \varepsilon_1 Q(\bar x)+O(\varepsilon_1^2),
        \qquad
        \bar x
        =
        \sqrt{\varepsilon_1}
        \left(\sigma_0-\frac13\right)^{-1/2}x.
\end{equation}
Identifying \(\varepsilon_1=\varepsilon\), using
\(\sigma_0=\sigma_{\mathrm{phys}}/(c^2d)\), and returning to physical
variables by \eqref{eq:eta-phys-from-dimless}, we obtain
\begin{equation}\label{eq:eta-phys-asympt}
\eta_{\mathrm{phys}}(\xi_{\mathrm{phys}})
=
d\,\varepsilon\,
Q\!\left(
\varepsilon^{1/2}
\left(\frac{\sigma_{\mathrm{phys}}}{c^2d}-\frac13\right)^{-1/2}
\frac{\xi_{\mathrm{phys}}}{d}
\right)
+O(d\varepsilon^2),
\end{equation}
where
\begin{equation}\label{eq:Q-final-profile}
        Q(X)=-3\sech^2(X/2).
\end{equation}
This is the expansion stated in Theorem~\ref{thm:main}.

\appendix
\section{Coefficient computations and remainder estimates}\label{sec:App1}

An application of Taylor expansion (together with \eqref{estr}) reveals the following lemma.
\begin{lemma}  \label{lemma A1}
As $(v,\varepsilon_1)\rightarrow (0,0)$, it holds that
\begin{equation} \label{A4 d1H}
    \begin{aligned}
        d\hat{H}^{\varepsilon_1}(v+\tilde{r}(v,\varepsilon_1))=\, & d^2 \hat{H}^0_0[v+\tilde{r}(v,\varepsilon_1)]+\frac{1}{2} d^3 \hat{H}^0_0[v+\tilde{r}(v,\varepsilon_1),v+\tilde{r}(v,\varepsilon_1)]\\
        +\,& \varepsilon_1 d^2\hat{H}^1_0[v+\tilde{r}(v,\varepsilon_1)]+O(|(q,p)| |(q,p,\varepsilon_1)|^2)\\
        =\,& d^2 \hat{H}^0_0[v+\tilde{r}(v,\varepsilon_1)]+\frac{1}{2} d^3 \hat{H}^0_0[v,v]  
        +\varepsilon_1 d^2\hat{H}^1_0[v] +  O(|(q,p)| |(q,p,\varepsilon_1)|^2)\\
        =\,& d^2\hat{H}^0_0[v] + O(|(q,p)| |(q,p,\varepsilon_1)|)
    \end{aligned}
\end{equation}
and
\begin{equation} \label{A3 d21H}
\begin{aligned}
    d^2 \hat{H}^{\varepsilon_1}(v+\tilde{r}(v,\varepsilon_1)) = \, &d^2 \hat{H}^0_0+ d^3 \hat{H}^0_0[v+\tilde{r}(v,\varepsilon_1)] + \varepsilon_1 d^2 \hat{H}^1_0 + O(|(q,p,\varepsilon_1)|^2).
\end{aligned}
\end{equation}

%    and
%\begin{equation} \label{A4 d1H1 d2H1}
%    \begin{aligned}
%        d_1 \hat{H}^1(v+\tilde{r}(v,\varepsilon_1))=d^2 \hat{H}^1(0)[v+\tilde{r}(v,\varepsilon_1)]+O(|(q,p)|^2), \, d_1 \hat{H}^2(v+\tilde{r}(v,\varepsilon_1))=O(|(q,p)|).
%    \end{aligned}
%\end{equation}

\end{lemma}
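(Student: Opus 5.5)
The plan is a Taylor expansion of \(d\hat H^{\varepsilon_1}\) and \(d^2\hat H^{\varepsilon_1}\) at the base point \(w:=v+\tilde r(v,\varepsilon_1)\), using the polynomial structure \eqref{expansion of hat H} in \(\varepsilon_1\) and the size estimates \eqref{estr}--\eqref{eq:dr-estimate}. First, since \(0\) is an equilibrium of the transformed system for every \(\varepsilon_1\), we have \(\widetilde{v_H^{\varepsilon_1}}(0)=0\). By the identity \eqref{eq:center-Hamiltonian-field-identity} this gives \(d\hat H^{\varepsilon_1}_0=0\), so \(d\hat H^0_0=d\hat H^1_0=d\hat H^2_0=0\). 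Expanding each piece in \(w\) about \(0\), with smoothness of \(\hat H^j\) on \(\operatorname{dom}(\mathbf L)\) giving uniform Taylor remainders, we get
\[
d\hat H^{\varepsilon_1}(w)=d^2\hat H^0_0[w]+\tfrac12 d^3\hat H^0_0[w,w]+\varepsilon_1 d^2\hat H^1_0[w]+O(\|w\|^3+|\varepsilon_1|\|w\|^2+\varepsilon_1^2\|w\|).
\]
By \eqref{estr}, \(\|w\|=O(|(q,p)|)\), so the remainder is \(O(|(q,p)|\,|(q,p,\varepsilon_1)|^2)\). This is the first line of \eqref{A4 d1H}.

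For the second line, I would replace \(w\) by \(v\) in the cubic and \(\varepsilon_1\)-terms only. The differences are multilinear expressions containing at least one factor \(\tilde r\), and \(\tilde r=O(|(q,p)|\,|(q,p,\varepsilon_1)|)\). Hence \(d^3\hat H^0_0[w,w]-d^3\hat H^0_0[v,v]=O(\|v\|\,\|\tilde r\|+\|\tilde r\|^2)\) and \(\varepsilon_1 d^2\hat H^1_0[\tilde r]=O(|\varepsilon_1|\,\|\tilde r\|)\), and both are absorbed in \(O(|(q,p)|\,|(q,p,\varepsilon_1)|^2)\). The quadratic term \(d^2\hat H^0_0[w]\) is deliberately kept with \(\tilde r\) inside. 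Its \(\tilde r\)-part is only of order \(|(q,p)|\,|(q,p,\varepsilon_1)|\), and it is later handled through Lemma~\ref{lemequ}: \(d^2\hat H^0_0[\tilde r,u]=\Psi[\mathbf L\tilde r,u]\), which vanishes for \(u\in M_1\) by the \(\Psi\)-orthogonality of the spectral splitting. The third line follows by crudely bounding the cubic and \(\varepsilon_1\) terms and \(d^2\hat H^0_0[\tilde r]\) by \(O(|(q,p)|\,|(q,p,\varepsilon_1)|)\).

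For \eqref{A3 d21H}, I would expand the second derivative one order less: \(d^2\hat H^{\varepsilon_1}(w)=d^2\hat H^0_0+d^3\hat H^0_0[w]+\varepsilon_1 d^2\hat H^1_0+O(\|w\|^2+|\varepsilon_1|\|w\|+\varepsilon_1^2)\), and the remainder is \(O(|(q,p,\varepsilon_1)|^2)\). The main obstacle I anticipate is functional-analytic bookkeeping rather than algebra. \(\hat H^{\varepsilon_1}\) is only smooth on \(\operatorname{dom}(\mathbf L)\) with the \(X^2\)-norm, so I must check that \(\tilde r\) is small in \(X^2\). This holds because the center-manifold reduction map takes values in \(\operatorname{dom}(\mathbf L)\) with the graph norm, which is equivalent to \(X^2\) by Lemma~\ref{lembfL}. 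I must also check that the Taylor remainders are uniform, which follows from bounded derivatives on a small ball, hypothesis (H3).
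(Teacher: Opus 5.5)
Your proposal is correct and is the argument the paper has in mind. The paper's proof is the single remark that the lemma follows from Taylor expansion together with \eqref{estr}, and you carry out exactly that expansion of $d\hat H^{\varepsilon_1}$ and $d^2\hat H^{\varepsilon_1}$ at $v+\tilde r(v,\varepsilon_1)$, using the polynomial dependence \eqref{expansion of hat H} and $\|v+\tilde r\|=O(|(q,p)|)$. The points you make explicit are the right implicit ingredients: $d\hat H^0_0=d\hat H^1_0=d\hat H^2_0=0$ because the origin is an equilibrium for every $\varepsilon_1$, and $\tilde r$ is small in the $X^2$ (graph) norm. One small slip: the uniform Taylor remainders come from the smoothness of $\hat H^{\varepsilon_1}$ near $0$ (cf.\ (H4)), not from (H3), which concerns $\mathsf B$.
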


We now prove the cubic coefficient lemma stated in Section~\ref{sec:normalform}.

\begin{proof}[Proof of Lemma~\ref{Lem coff}]
    
Recalling \eqref{eq:center-Htilde-def}, by Taylor expansion, we obtain 
\begin{equation}
    \begin{aligned}
        &d^2 \hat{H}^{0}_0[(\Phi,\theta,Z,\eta),(\Phi,\theta,Z,\eta)]\\
        &\quad=\int_0^1 \theta_y^2 \; dy-\int_0^1\Phi_y^2 \; dy+6 \left(\Phi(1)+Z\right)\int_0^1 y \Phi_y \; dy \\
        &\qquad-3\left(\Phi(1)+Z\right)^2+\eta^2-4\omega_0\eta\int_0^1 \theta \;dy+\frac{\omega_0^2}{3}\eta^2-\omega_0\eta^2+9\sigma_0(\Phi(1) + Z)^2,
    \end{aligned}
\end{equation}

\begin{equation}
    \begin{aligned}
        &d^3 \hat{H}^{0}_0[(\Phi,\theta,Z,\eta),(\Phi,\theta,Z,\eta),(\Phi,\theta,Z,\eta)]\\
        &\quad=-3\eta\int_0^1 \theta_y^2 \;dy+3\eta\int_0^1 \Phi_y^2 \; dy-\omega_0^2\eta^3+3\omega_0\eta^3-3\eta^3+6\omega_0\eta^2\int_0^1 \theta \; dy\\
        &\qquad -18\left(\Phi(1)+Z\right)\int_0^1 y\Phi_y\theta_y \;dy +54\left(\Phi(1)+Z\right)\theta_{\mathrm{corr}}(1)\int_0^1 y\Phi_y \; dy\\
        &\qquad
        -18\eta\left(\Phi(1)+Z\right)\int_0^1 y\Phi_y \;dy+9\eta\left(\Phi(1)+Z\right)^2\\
        &\qquad+54\left(\Phi(1)+Z\right)^2 \int_0^1 y^2\theta_y \; dy-54\left(\Phi(1)+Z\right)^2\theta_{\mathrm{corr}}(1) 
         + 162 \sigma_0 (\Phi(1) + Z)^2 \theta_{\mathrm{corr}}(1),
    \end{aligned}
\end{equation}
and
\begin{align}
    d^2\hat{H}^{1}_0 [(\Phi,\theta,Z,\eta),(\Phi,\theta,Z,\eta)]=-4\eta \int_0^1 \theta \,dy+\frac{2\omega_0}{3}\eta^2-\eta^2.
\end{align}

Substituting $v=q\mathsf e+p\mathsf f$ into these multilinear forms and using
the symmetry of the derivatives gives the stated identities. The contractions
which enter the reduced equations are, in particular,
\begin{equation}
\begin{gathered}
        d^2\hat H^0_0[\mathsf e,\mathsf e]=0,
        \qquad
        d^2\hat H^0_0[\mathsf f,\mathsf f]=1,
        \qquad
        d^2\hat H^0_0[\mathsf e,\mathsf f]=0,\\
        d^3\hat H^0_0[\mathsf e,\mathsf e,\mathsf e]
        =-\left(\sigma_0-\frac13\right)^{-3/2},
        \qquad
        d^3\hat H^0_0[\mathsf e,\mathsf f,\mathsf f]
        =-\frac32\left(\sigma_0-\frac13\right)^{-1/2},\\
        d^2\hat H^1_0[\mathsf e,\mathsf e]
        =-\left(\sigma_0-\frac13\right)^{-1}.
\end{gathered}
\end{equation}
All remaining contractions either vanish or follow from symmetry. This proves
Lemma~\ref{Lem coff}.
\end{proof}

To prove Lemma \ref{R1 and R2 are C1} we need the following estimate.

\begin{lemma}\label{est deri}
    The following estimates hold for $\varepsilon_1 \ne 0$:
    \begin{equation} \label{A14 partial derivatives}
        \begin{aligned}
            \frac{\partial \mathcal{R}_1}{\partial q}(q,p,\varepsilon_1)&=O(|(q,p,\varepsilon_1)|) ,&
            \frac{\partial \mathcal{R}_1}{\partial p}(q,p,\varepsilon_1)&=O(|(q,p,\varepsilon_1)|),\\
            \frac{\partial \mathcal{R}_2}{\partial q}(q,p,\varepsilon_1)&=O(|p|)+O(|(q,\varepsilon_1)|^2) ,&
            \frac{\partial \mathcal{R}_2}{\partial p}(q,p,\varepsilon_1)&=O(|(q,p,\varepsilon_1)|), \\
            \frac{\partial \mathcal{R}_1}{\partial \varepsilon_1}(q,p,\varepsilon_1)&=O(|(q,p)|),  &
            \frac{\partial \mathcal{R}_2}{\partial \varepsilon_1}(q,p,\varepsilon_1)&=O(|p|)+O( |(q,\varepsilon_1)|^2).
        \end{aligned}
    \end{equation}

\end{lemma}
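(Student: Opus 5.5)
Since $\mathcal R_1,\mathcal R_2$ are defined as the remainders in \eqref{q_x p_x}, I will write them explicitly as
\begin{equation*}
\mathcal R_1 = \partial_p\bar H^{\varepsilon_1}(q,p)-p,\qquad
\mathcal R_2 = -\partial_q\bar H^{\varepsilon_1}(q,p)-\Big(\sigma_0-\tfrac13\Big)^{-1}\varepsilon_1 q-\tfrac12\Big(\sigma_0-\tfrac13\Big)^{-3/2}q^2,
\end{equation*}
using the canonical form \eqref{eq:canonical-Upsilon}. Here $\partial_q\bar H=d\bar H[\mathsf e]$ and $\partial_p\bar H=d\bar H[\mathsf f]$. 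The partial derivatives of $\mathcal R_j$ are then second derivatives of $\bar H^{\varepsilon_1}$ minus explicit polynomial terms. The plan is to Taylor expand these second derivatives, using \eqref{A3 d21H} of Lemma~\ref{lemma A1}, the coefficient identities of Lemma~\ref{Lem coff}, and the bounds \eqref{estr}--\eqref{eq:dr-estimate} on $\tilde r$. Since $\bar H$ is $C^k$ with $k\ge 3$ by (H3)--(H4) and $\hat H^{\varepsilon_1}$ is polynomial in $\varepsilon_1$, all these expansions are legitimate.

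For the $(q,p)$-derivatives, I will differentiate $\bar H^{\varepsilon_1}(q,p)=\hat H^{\varepsilon_1}(v+\tilde r(v,\varepsilon_1))$ twice with the chain rule. This gives
\begin{equation*}
d^2\bar H[w_1,w_2]=d^2\hat H^{\varepsilon_1}\big[w_1+d_1\tilde r\, w_1,\,w_2+d_1\tilde r\, w_2\big]+d\hat H^{\varepsilon_1}\big[d_1^2\tilde r[w_1,w_2]\big].
\end{equation*}
For the last term I will use Lemma~\ref{lemequ}, which shows that $d\hat H^{\varepsilon_1}$ at the base point is $O(|(q,p)|)$ after pairing. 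By \eqref{A4 d1H}, the leading part $d^2\hat H^0_0[v]$ applied to tangent directions of the center manifold is controlled by the reduced symplectic pairing. The term $d_1^2\tilde r$ is bounded, so the last term is $O(|(q,p)|)$. I would like to refine this to $O(|p|)+O(|(q,\varepsilon_1)|^2)$ in the $\mathcal R_2$, $\partial_q$ entry. For that I will use the structure $d^2\hat H^0_0[v,\cdot]=p\,\Psi$-pairing from \eqref{d2H00}, together with $d\hat H$ vanishing on $\ker\mathbf L$-directions to second order. In the first term I substitute \eqref{A3 d21H} and Lemma~\ref{Lem coff}. For example, $d^2\bar H[\mathsf e,\mathsf e]=d^3\hat H^0_0[v,\mathsf e,\mathsf e]+\varepsilon_1 d^2\hat H^1_0[\mathsf e,\mathsf e]+O(|(q,p,\varepsilon_1)|^2)+O(|d_1\tilde r|)$-corrections. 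The explicit values $-(\sigma_0-\frac13)^{-3/2}q-(\sigma_0-\frac13)^{-1}\varepsilon_1$ cancel exactly against the derivative of the subtracted polynomial. What is left is $-\tfrac32(\sigma_0-\tfrac13)^{-1/2}\cdot$(terms with $p$), which is $O(|p|)$, plus $O(|(q,p,\varepsilon_1)|^2)$.

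For the $\varepsilon_1$-derivatives, I will differentiate \eqref{eq:dH-e}--\eqref{eq:dH-f} in $\varepsilon_1$. Because the origin is an equilibrium for all $\varepsilon_1$, i.e.\ $\tilde r(0,\varepsilon_1)=0$, every term of $\mathcal R_j$ vanishes at $(q,p)=0$ uniformly in $\varepsilon_1$. Hence $\partial_{\varepsilon_1}\mathcal R_j$ also vanishes at $(q,p)=0$, which by the mean value theorem in $(q,p)$ gives the factor $|(q,p)|$. For $\mathcal R_2$ I will again separate the $p$-dependence. By reversibility $\mathcal R_2$ is even in $p$, so $\mathcal R_2(q,p)-\mathcal R_2(q,0)=O(|p|^2)\subset O(|p|)$. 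It then suffices to bound $\partial_{\varepsilon_1}\mathcal R_2(q,0,\varepsilon_1)$ by $O(|(q,\varepsilon_1)|^2)$. This follows because the linear-in-$q$ coefficient of $\mathcal R_2(q,0,\varepsilon_1)$ is $O(\varepsilon_1^2)$ (the $\varepsilon_1$-linear part having been extracted exactly via $d^2\hat H^1_0$), and its quadratic coefficient is $O(|(q,\varepsilon_1)|)$.

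The main obstacle is the sharpened bound $O(|p|)+O(|(q,\varepsilon_1)|^2)$ for $\partial_q\mathcal R_2$ and $\partial_{\varepsilon_1}\mathcal R_2$, as opposed to the cheap $O(|(q,p,\varepsilon_1)|)$. It requires checking that every contribution linear in $q$ or $\varepsilon_1$ with $p=0$ has been captured exactly by the coefficients of Lemma~\ref{Lem coff}. In particular, the corrections coming from $\tilde r$ (through \eqref{estr}) and from the Darboux map must not produce new $q\varepsilon_1$ or $q^2$ terms at this order. I would handle this by writing $\mathcal R_2(q,0,\varepsilon_1)$ as a Taylor polynomial of order three in $(q,\varepsilon_1)$ with vanishing constant and linear parts, then verifying that its quadratic part is zero using \eqref{A4 d1H}. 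The derivative bounds then follow by differentiating the cubic-order remainder, which is permitted because of the $C^k$ regularity with $k\ge3$.
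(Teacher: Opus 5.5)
Your argument is correct in substance. For $\partial_q\mathcal R_2$ it is the paper's argument: the chain rule for $\partial_q^2\bar H^{\varepsilon_1}$, the expansion \eqref{A3 d21H}, the coefficients of Lemma~\ref{Lem coff}, and Lemma~\ref{lemequ} in the form $d^2\hat H^0_0[v,w]=\Psi[\mathbf Lv,w]=p\,\Psi[\mathsf e,w]$ for the term $d\hat H^{\varepsilon_1}(v+\tilde r)[d_1^2\tilde r[\mathsf e,\mathsf e]]$.

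For the $\varepsilon_1$-derivatives you take a different, softer route. The paper differentiates \eqref{R2 A2} in $\varepsilon_1$ and uses Lemma~\ref{lemequ} to kill $d^2\hat H^0_0[d_2\tilde r,\mathsf e]$ and to bound $d^2\hat H^0_0[v,d_2d_1\tilde r[\mathsf e,1]]$ by $O(|p|)$. You instead proceed in three steps:
\begin{enumerate}
\item use $\mathcal R_j(0,0,\varepsilon_1)=0$, which gives $\partial_{\varepsilon_1}\mathcal R_1=O(|(q,p)|)$ at once;
\item pass to $p=0$;
\item show that the $2$-jet of $g(q,\varepsilon_1):=\mathcal R_2(q,0,\varepsilon_1)$ vanishes.
\end{enumerate}
This is economical. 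Once $g=O(|q|\,|(q,\varepsilon_1)|^2)$ is known (it is \eqref{mathcal R2} at $p=0$, or your check via \eqref{A4 d1H}), both sharp bounds follow from Taylor's theorem plus Lipschitz dependence of $\nabla\mathcal R_2$ on $p$; evenness in $p$ is not even needed. The price is regularity: you need $g\in C^3$, i.e.\ $\bar H^{\varepsilon_1}\in C^4$ jointly in $(q,p,\varepsilon_1)$, not merely $C^3$. This is harmless, since (H3) holds here for every $k$. The paper's route instead displays the cancellations directly at the level of derivatives.

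One point must be made explicit, because it is exactly the obstacle you identify. The cancellations do not come from \eqref{A4 d1H} or from the size bound \eqref{eq:dr-estimate}. They come from the identity $d^2\hat H^0_0[\mathsf e,w]=\Psi[\mathbf L\mathsf e,w]=0$ for all $w\in\operatorname{dom}(\mathbf L)$, which is Lemma~\ref{lemequ} with $\mathbf L\mathsf e=0$. Your remark that $d\hat H$ vanishes to second order in $\ker\mathbf L$-directions is this identity, and it has to be invoked at three places:
\begin{enumerate}
\item In your expansion of $\partial_q^2\bar H^{\varepsilon_1}$, the cross term $2d^2\hat H^0_0[\mathsf e,d_1\tilde r(v,\varepsilon_1)[\mathsf e]]$ is first order in $d_1\tilde r=O(|(q,p,\varepsilon_1)|)$. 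Recording it as an ``$O(|d_1\tilde r|)$-correction'' only yields the cheap bound; it must vanish identically.
\item The identity removes $d^2\hat H^0_0[\tilde r,\mathsf e]$ and $d^2\hat H^0_0[q\mathsf e,d_1\tilde r[\mathsf e]]$ from the quadratic part of $g$.
\item It removes the a priori $O(\varepsilon_1)$ term $2d^2\hat H^0_0[\mathsf e,d_1\tilde r(0,\varepsilon_1)[\mathsf e]]$ from the linear-in-$q$ coefficient $\partial_q g(0,\varepsilon_1)$.
\end{enumerate}

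Finally, $d^3\hat H^0_0[v,\mathsf e,\mathsf e]=-(\sigma_0-\frac13)^{-3/2}q$ contains no $p$-terms. The $O(|p|)$ contribution to $\partial_q\mathcal R_2$ comes solely from $d^2\hat H^0_0[v,d_1^2\tilde r[\mathsf e,\mathsf e]]=p\,\Psi[\mathsf e,d_1^2\tilde r[\mathsf e,\mathsf e]]$.
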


\begin{proof}
Recalling \eqref{e f}, \eqref{reduced Hamiltonian}, \eqref{q_x p_x}, and $(iv)$ in Theorem~\ref{thm:Mielke_center_Darboux} and writing 
\begin{align} \label{R1 A1}
    \mathcal{R}_1(q,p,\varepsilon_1)=d \bar H^{\varepsilon_1}(v)[\mathsf{f}]-p
  = d \hat{H}^{\varepsilon_1} (v + \tilde{r}( v, \varepsilon_1)\bigr)\bigr[\mathsf{f}+d_1 \tilde{r}(v,\varepsilon_1)[\mathsf{f}]\bigr]-p
\end{align}
and
\begin{equation} \label{R2 A2}
\begin{aligned}
    \mathcal{R}_2(q,p,\varepsilon_1)=\, &-d\bar H^{\varepsilon_1}(v)[\mathsf{e}]-\frac{1}{\sigma_0-\frac{1}{3}}\varepsilon_1 q
        - \frac{1}{2\left(\sigma_0-\frac{1}{3}\right)^{3/2}} q^2\\
  =\, & -d \hat{H}^{\varepsilon_1} (v + \tilde{r}( v, \varepsilon_1)\bigr)\bigr[\mathsf{e}+d_1 \tilde{r}(v,\varepsilon_1)[\mathsf{e}]\bigr]-\frac{1}{\sigma_0-\frac{1}{3}}\varepsilon_1 q
        - \frac{1}{2\left(\sigma_0-\frac{1}{3}\right)^{3/2}} q^2.
\end{aligned}
\end{equation}

We record the details for the two estimates in which the cancellations from
Lemma~\ref{Lem coff} are used most explicitly, namely
$\partial_q\mathcal R_2$ and $\partial_{\varepsilon_1}\mathcal R_2$. The
remaining estimates follow from the same expansion and the bounds
\eqref{estr}--\eqref{eq:dr-estimate}. A direct computation, together with
Lemma~\ref{lemma A1} and \eqref{estr}, gives
\begin{equation}
\begin{aligned}
\frac{\partial \mathcal{R}_2}{\partial q}(q,p,\varepsilon_1) = & 
- d^2 \hat{H}^0_0\bigr[\mathsf{e},\mathsf{e}\bigr] - 2 d^2 \hat{H}^0_0\bigr[\mathsf{e},d_1 \tilde{r}(v,\varepsilon_1)[\mathsf{e}]\bigr] 
-d^3 \hat{H}^0_0\bigr[v,\mathsf{e},\mathsf{e}\bigr] -  \varepsilon_1 d^2 \hat{H}^1_0\bigr[\mathsf{e},\mathsf{e}\bigr] \\- &\, d^2\hat{H}^0_0\bigr[v,d^2\tilde{r}(v,\varepsilon_1)[\mathsf{e},\mathsf{e}]\bigr] -\frac{1}{\sigma_0-\frac{1}{3}}\varepsilon_1 
        - \frac{1}{\left(\sigma_0-\frac{1}{3}\right)^{3/2}} q + O(|(q,p,\varepsilon_1)|^2).
\end{aligned}
\end{equation}
Then Lemma~\ref{lemequ} implies
\begin{equation}
\begin{aligned}
d^2 \hat{H}^0_0\bigr[\mathsf{e},d_1 \tilde{r}(v,\varepsilon_1)[\mathsf{e}]\bigr] = &\Psi \bigr[\mathbf{L}\mathsf{e},d_1 \tilde{r}(v,\varepsilon_1)[\mathsf{e}]\bigr] = 0, \\ d^2\hat{H}^0_0\bigr[v,d^2\tilde{r}(v,\varepsilon_1)[\mathsf{e},\mathsf{e}]\bigr] = &\Psi\bigr[p \mathsf{e}, d^2\tilde{r}(v,\varepsilon_1)[\mathsf{e},\mathsf{e}]\bigr] = O(|p|),
\end{aligned}
\end{equation}
and Lemma~\ref{Lem coff} gives
\begin{equation}
\begin{gathered}
d^2 \hat{H}^0_0\bigr[\mathsf{e},\mathsf{e}\bigr] = 0, \qquad d^3 \hat{H}^0_0\bigr[v,\mathsf{e},\mathsf{e}\bigr] = - \frac{1}{\left(\sigma_0-\frac{1}{3}\right)^{3/2}} q,  \qquad d^2 \hat{H}^1_0\bigr[\mathsf{e},\mathsf{e}\bigr]  =  -\frac{1}{\sigma_0-\frac{1}{3}}.
\end{gathered}
\end{equation}

Combining the preceding cancellations gives the asserted estimate for
\(\partial_q\mathcal R_2\). Similarly, using also
\[
d_2 \tilde{r}(v,\varepsilon_1) = d_2 \tilde{r}(v,\varepsilon_1) - d_2 \tilde{r}(0,\varepsilon_1) = O(|(q,p)|),
\]
we obtain
\begin{equation}
\begin{aligned}
\frac{\partial \mathcal{R}_2}{\partial \varepsilon_1 }(q,p,\varepsilon_1) = & 
-d^2 \hat{H}_0^0\bigr[d_2 \tilde{r}(v,\varepsilon_1), \mathsf{e}\bigr]
    -d^2 \hat{H}_0^0\bigr[v, d_2 d_1 \tilde{r}(v,\varepsilon_1)[\mathsf{e},1]\bigr]  \\-& d^2 \hat{H}^1_0\bigr[v,\mathsf{e}\bigr] -\frac{1}{\sigma_0-\frac{1}{3}}q
        + O(|(q,p,\varepsilon_1)|^2).
\end{aligned}
\end{equation}

Now it follows from Lemma~\ref{lemequ} and Lemma~\ref{Lem coff} that 
\[
d^2 \hat{H}_0^0\bigr[d_2 \tilde{r}(v,\varepsilon_1), \mathsf{e}\bigr] = 0, \qquad 
d^2 \hat{H}_0^0\bigr[v, d_2 d_1 \tilde{r}(v,\varepsilon_1)[\mathsf{e},1]\bigr] = O(|p|),
\]
and 
\[
d^2 \hat{H}^1_0\bigr[v,\mathsf{e}\bigr] = -\frac{1}{\sigma_0-\frac{1}{3}}q.
\]
This proves the estimate for $\partial_{\varepsilon_1}\mathcal R_2$, and hence the lemma.
\end{proof}

\begingroup
\setlength{\itemsep}{2pt}
\setlength{\parskip}{0pt}
\bibliographystyle{alpha}
\bibliography{Bibjournal.bib}

@misc{russell1844report,
  author       = {Russell, J. Scott},
  title        = {Report on Waves},
  howpublished = {Report of the 14th Meeting of the {British Association} for the {Advancement} of {Science}, John Murray, London},
  pages        = {311--390},
  year         = {1844}
}

@book{Schwartz1966,
  author    = {Schwartz, Laurent},
  title     = {Th{\'e}orie des distributions},
  series    = {Publications de l'Institut de Math{\'e}matique de l'Universit{\'e} de Strasbourg},
  volume    = {IX-X},
  publisher = {Hermann},
  address   = {Paris},
  year      = {1966},
  pages     = {xiii+420},
  note      = {Nouvelle {\'e}dition, enti{\`e}rement corrig{\'e}e, refondue et augment{\'e}e}
}

@book{Friedman1969,
  author    = {Friedman, Avner},
  title     = {Partial Differential Equations},
  publisher = {Holt, Rinehart and Winston, Inc.},
  address   = {New York--Montreal--London},
  year      = {1969},
  pages     = {vi+262}
}

@book{Abraham1988,
  author    = {Abraham, R. and Marsden, J. E. and Ratiu, T.},
  title     = {Manifolds, Tensor Analysis, and Applications},
  series    = {Applied Mathematical Sciences},
  volume    = {75},
  edition   = {2},
  publisher = {Springer},
  address   = {New York},
  year      = {1988},
  pages     = {x+654},
  isbn      = {0-387-96790-7},
  doi       = {10.1007/978-1-4612-1029-0}
}

@book{Conway1996,
  author    = {Conway, John B.},
  title     = {A Course in Functional Analysis},
  series    = {Graduate Texts in Mathematics},
  volume    = {96},
  edition   = {2},
  publisher = {Springer},
  address   = {New York},
  year      = {1990},
}

@book{mielke1991hamiltonian,
  author    = {Mielke, Alexander},
  title     = {{Hamiltonian} and {Lagrangian} Flows on Center Manifolds: With Applications to Elliptic Variational Problems},
  series    = {Lecture Notes in Mathematics},
  volume    = {1489},
  publisher = {Springer},
  address   = {Berlin},
  year      = {1991},
  doi       = {10.1007/BFb0097544}
}

@book{constantin2011nonlinear,
  author    = {Constantin, Adrian},
  title     = {Nonlinear Water Waves with Applications to Wave-Current Interactions and Tsunamis},
  series    = {CBMS-NSF Regional Conference Series in Applied Mathematics},
  volume    = {81},
  publisher = {SIAM},
  address   = {Philadelphia, PA},
  year      = {2011},
  doi       = {10.1137/1.9781611971873}
}

@book{kato2013perturbation,
  author    = {Kato, Tosio},
  title     = {Perturbation Theory for Linear Operators},
  series    = {Classics in Mathematics},
  volume    = {132},
  publisher = {Springer},
  address   = {Berlin},
  year      = {1995},
  edition   = {2},
  note      = {Reprint of the 1980 edition},
  doi       = {10.1007/978-3-642-66282-9}
}

@article{friedrichs1954existence,
  author  = {Friedrichs, Kurt O. and Hyers, D. H.},
  title   = {The existence of solitary waves},
  journal = {Communications on Pure and Applied Mathematics},
  volume  = {7},
  number  = {3},
  pages   = {517--550},
  year    = {1954},
  doi     = {10.1002/cpa.3160070305}
}

@article{thomas1977existence,
  author  = {Beale, J. Thomas},
  title   = {The existence of solitary water waves},
  journal = {Communications on Pure and Applied Mathematics},
  volume  = {30},
  number  = {4},
  pages   = {373--389},
  year    = {1977},
  doi     = {10.1002/cpa.3160300402}
}

@article{amick1981periodic,
  author  = {Amick, Charles J. and Toland, John Francis},
  title   = {On periodic water-waves and their convergence to solitary waves in the long-wave limit},
  journal = {Philosophical Transactions of the Royal Society of London. Series A, Mathematical and Physical Sciences},
  volume  = {303},
  number  = {1481},
  pages   = {633--669},
  year    = {1981}
}

@article{kirchgassner1988nonlinearly,
  author  = {Kirchg{\"a}ssner, Klaus},
  title   = {Nonlinearly resonant surface waves and homoclinic bifurcation},
  journal = {Advances in Applied Mechanics},
  volume  = {26},
  pages   = {135--181},
  year    = {1988},
  doi     = {10.1016/S0065-2156(08)70288-6}
}

@article{amick1989theory,
  author  = {Amick, Charles J. and Kirchg{\"a}ssner, Klaus},
  title   = {A theory of solitary water-waves in the presence of surface tension},
  journal = {Archive for Rational Mechanics and Analysis},
  volume  = {105},
  number  = {1},
  pages   = {1--49},
  year    = {1989},
  doi     = {10.1007/BF00251596}
}

@article{iooss1992water,
  author  = {Iooss, G{\'e}rard and Kirchg{\"a}ssner, Klaus},
  title   = {Water waves for small surface tension: an approach via normal form},
  journal = {Proceedings of the Royal Society of Edinburgh Section A: Mathematics},
  volume  = {122},
  number  = {3--4},
  pages   = {267--299},
  year    = {1992},
  doi     = {10.1017/S0308210500021119}
}

@article{Yi1993,
  author  = {Yi, Yingfei},
  title   = {Stability of integral manifold and orbital attraction of quasi-periodic motion},
  journal = {Journal of Differential Equations},
  volume  = {103},
  number  = {2},
  pages   = {278--322},
  year    = {1993},
  doi     = {10.1006/jdeq.1993.1051}
}

@article{buffoni1996plethora,
  author  = {Buffoni, B. and Groves, M. D. and Toland, John Francis},
  title   = {A plethora of solitary gravity-capillary water waves with nearly critical {Bond} and {Froude} numbers},
  journal = {Philosophical Transactions of the Royal Society of London. Series A: Mathematical, Physical and Engineering Sciences},
  volume  = {354},
  number  = {1707},
  pages   = {575--607},
  year    = {1996},
  doi     = {10.1098/rsta.1996.0020}
}

@article{iooss1996capillary,
  author  = {Iooss, G{\'e}rard and Kirrmann, Pius},
  title   = {Capillary gravity waves on the free surface of an inviscid fluid of infinite depth. {Existence} of solitary waves},
  journal = {Archive for Rational Mechanics and Analysis},
  volume  = {136},
  number  = {1},
  pages   = {1--19},
  year    = {1996},
  doi     = {10.1007/BF02199364}
}

@article{buffoni1999multiplicity,
  author  = {Buffoni, B. and Groves, M. D.},
  title   = {A multiplicity result for solitary gravity-capillary waves in deep water via critical-point theory},
  journal = {Archive for Rational Mechanics and Analysis},
  volume  = {146},
  number  = {3},
  pages   = {183--220},
  year    = {1999},
  doi     = {10.1007/s002050050141}
}

@article{groves2001spatial,
  author  = {Groves, M. D. and Mielke, A.},
  title   = {A spatial dynamics approach to three-dimensional gravity-capillary steady water waves},
  journal = {Proceedings of the Royal Society of Edinburgh Section A: Mathematics},
  volume  = {131},
  number  = {1},
  pages   = {83--136},
  year    = {2001},
  doi     = {10.1017/S0308210500000809}
}

@article{groves2002dimension,
  author  = {Groves, M. D. and Haragus, M. and Sun, S. M.},
  title   = {A dimension-breaking phenomenon in the theory of steady gravity-capillary water waves},
  journal = {Philosophical Transactions of the Royal Society of London. Series A: Mathematical, Physical and Engineering Sciences},
  volume  = {360},
  number  = {1799},
  pages   = {2189--2243},
  year    = {2002},
  doi     = {10.1098/rsta.2002.1066}
}

@article{haragus2002finite,
  author  = {Haragus, Mariana and Scheel, Arnd},
  title   = {Finite-wavelength stability of capillary-gravity solitary waves},
  journal = {Communications in Mathematical Physics},
  volume  = {225},
  number  = {3},
  pages   = {487--521},
  year    = {2002},
  doi     = {10.1007/s002200100590}
}

@article{groves2004steady,
  author  = {Groves, Mark D.},
  title   = {Steady water waves},
  journal = {Journal of Nonlinear Mathematical Physics},
  volume  = {11},
  number  = {4},
  pages   = {435--460},
  year    = {2004},
  doi     = {10.2991/jnmp.2004.11.4.2}
}

@article{buffoni2004existence,
  author  = {Buffoni, B.},
  title   = {Existence and conditional energetic stability of capillary-gravity solitary water waves by minimisation},
  journal = {Archive for Rational Mechanics and Analysis},
  volume  = {173},
  number  = {1},
  pages   = {25--68},
  year    = {2004},
  doi     = {10.1007/s00205-004-0310-0}
}

@article{buffoni2004existence2,
  author  = {Buffoni, B.},
  title   = {Existence by minimisation of solitary water waves on an ocean of infinite depth},
  journal = {Annales de l'Institut Henri Poincar{\'e} C, Analyse non lin{\'e}aire},
  volume  = {21},
  number  = {4},
  pages   = {503--516},
  year    = {2004},
  doi     = {10.1016/j.anihpc.2004.02.001}
}

@article{puaruau2005nonlinear,
  author  = {P{\u{a}}r{\u{a}}u, E. I. and Vanden-Broeck, J.-M. and Cooker, M. J.},
  title   = {Nonlinear three-dimensional gravity-capillary solitary waves},
  journal = {Journal of Fluid Mechanics},
  volume  = {536},
  pages   = {99--105},
  year    = {2005},
  doi     = {10.1017/S0022112005005136}
}

@article{wahlen2006steady,
  author  = {Wahl{\'e}n, Erik},
  title   = {Steady periodic capillary-gravity waves with vorticity},
  journal = {SIAM Journal on Mathematical Analysis},
  volume  = {38},
  number  = {3},
  pages   = {921--943},
  year    = {2006},
  doi     = {10.1137/050630465}
}

@article{Wahlen2007HamiltonianConstantVorticity,
  author  = {Wahl{\'e}n, Erik},
  title   = {A {Hamiltonian} formulation of water waves with constant vorticity},
  journal = {Letters in Mathematical Physics},
  volume  = {79},
  number  = {3},
  pages   = {303--315},
  year    = {2007},
  doi     = {10.1007/s11005-007-0143-5}
}

@article{groves2007spatial,
  author  = {Groves, Mark D. and Wahl{\'e}n, Erik},
  title   = {Spatial dynamics methods for solitary gravity-capillary water waves with an arbitrary distribution of vorticity},
  journal = {SIAM Journal on Mathematical Analysis},
  volume  = {39},
  number  = {3},
  pages   = {932--964},
  year    = {2007},
  doi     = {10.1137/070682283}
}

@article{groves2008fully,
  author  = {Groves, M. D. and Sun, S.-M.},
  title   = {Fully localised solitary-wave solutions of the three-dimensional gravity-capillary water-wave problem},
  journal = {Archive for Rational Mechanics and Analysis},
  volume  = {188},
  number  = {1},
  pages   = {1--91},
  year    = {2008},
  doi     = {10.1007/s00205-007-0085-1}
}

@article{deng2009three,
  author  = {Deng, Shengfu and Sun, Shu-Ming},
  title   = {Three-dimensional gravity-capillary waves on water---small surface tension case},
  journal = {Physica D: Nonlinear Phenomena},
  volume  = {238},
  number  = {17},
  pages   = {1735--1751},
  year    = {2009},
  doi     = {10.1016/j.physd.2009.05.011}
}

@article{constantin2011steady,
  author  = {Constantin, Adrian and Varvaruca, Eugen},
  title   = {Steady periodic water waves with constant vorticity: regularity and local bifurcation},
  journal = {Archive for Rational Mechanics and Analysis},
  volume  = {199},
  number  = {1},
  pages   = {33--67},
  year    = {2011},
}

@article{groves2011existence,
  author  = {Groves, M. D. and Wahl{\'e}n, Erik},
  title   = {On the existence and conditional energetic stability of solitary gravity-capillary surface waves on deep water},
  journal = {Journal of Mathematical Fluid Mechanics},
  volume  = {13},
  number  = {4},
  pages   = {593--627},
  year    = {2011},
  doi     = {10.1007/s00021-010-0034-x}
}

@article{martin2012regularity,
  author  = {Martin, Calin Iulian},
  title   = {Regularity of steady periodic capillary water waves with constant vorticity},
  journal = {Journal of Nonlinear Mathematical Physics},
  volume  = {19},
  number  = {suppl. 1},
  pages   = {1240006},
  year    = {2012},
  doi     = {10.1142/S1402925112400062}
}

@article{hur2012no,
  author  = {Hur, Vera Mikyoung},
  title   = {No solitary waves exist on 2{D} deep water},
  journal = {Nonlinearity},
  volume  = {25},
  number  = {12},
  pages   = {3301--3312},
  year    = {2012},
  doi     = {10.1088/0951-7715/25/12/3301}
}

@article{martin2013local,
  author  = {Martin, Calin Iulian},
  title   = {Local bifurcation for steady periodic capillary water waves with constant vorticity},
  journal = {Journal of Mathematical Fluid Mechanics},
  volume  = {15},
  number  = {1},
  pages   = {155--170},
  year    = {2013},
  doi     = {10.1007/s00021-012-0096-z}
}

@article{martin2013capillarygravity,
  author  = {Martin, Calin Iulian},
  title   = {Local bifurcation and regularity for steady periodic capillary-gravity water waves with constant vorticity},
  journal = {Nonlinear Analysis: Real World Applications},
  volume  = {14},
  number  = {1},
  pages   = {131--149},
  year    = {2013},
  doi     = {10.1016/j.nonrwa.2012.05.007}
}

@article{martinMatioc2013wilton,
  author  = {Martin, Calin Iulian and Matioc, Bogdan-Vasile},
  title   = {Existence of {Wilton} ripples for water waves with constant vorticity and capillary effects},
  journal = {SIAM Journal on Applied Mathematics},
  volume  = {73},
  number  = {4},
  pages   = {1582--1595},
  year    = {2013},
  doi     = {10.1137/120900290}
}

@article{grovesWahlen2015existence,
  author  = {Groves, Mark D. and Wahl{\'e}n, Erik},
  title   = {Existence and conditional energetic stability of solitary gravity-capillary water waves with constant vorticity},
  journal = {Proceedings of the Royal Society of Edinburgh Section A: Mathematics},
  volume  = {145},
  number  = {4},
  pages   = {791--883},
  year    = {2015},
  doi     = {10.1017/S0308210515000116}
}

@article{hsu2016gravity,
  author  = {Hsu, Hung-Chu and Francius, Marc and Montalvo, Pablo and Kharif, Christian},
  title   = {Gravity-capillary waves in finite depth on flows of constant vorticity},
  journal = {Proceedings of the Royal Society A: Mathematical, Physical and Engineering Sciences},
  volume  = {472},
  number  = {2195},
  pages   = {20160363},
  year    = {2016},
  doi     = {10.1098/rspa.2016.0363}
}

@article{kozlov2020solitary,
  author  = {Kozlov, Vladimir and Kuznetsov, Nikolai and Lokharu, Evgeniy},
  title   = {Solitary waves on constant vorticity flows with an interior stagnation point},
  journal = {Journal of Fluid Mechanics},
  volume  = {904},
  pages   = {A4},
  year    = {2020},
  doi     = {10.1017/jfm.2020.647}
}

@article{hur2020exact,
  author  = {Hur, Vera Mikyoung and Wheeler, Miles H.},
  title   = {Exact free surfaces in constant vorticity flows},
  journal = {Journal of Fluid Mechanics},
  volume  = {896},
  pages   = {R1},
  year    = {2020},
  doi     = {10.1017/jfm.2020.390}
}

@article{ifrim2020no,
  author  = {Ifrim, Mihaela and Tataru, Daniel},
  title   = {No solitary waves in 2{D} gravity and capillary waves in deep water},
  journal = {Nonlinearity},
  volume  = {33},
  number  = {10},
  pages   = {5457--5476},
  year    = {2020},
  doi     = {10.1088/1361-6544/ab999b}
}

@article{ifrim2022no,
  author  = {Ifrim, Mihaela and Pineau, Ben and Tataru, Daniel and Taylor, Mitchell},
  title   = {No pure capillary solitary waves exist in 2{D} finite depth},
  journal = {SIAM Journal on Mathematical Analysis},
  volume  = {54},
  number  = {4},
  pages   = {4452--4464},
  year    = {2022},
  doi     = {10.1137/22M1474929}
}

@article{kharif2025nonlinear,
  author  = {Kharif, Christian and Abid, Malek and Chen, Yang-Yih and Hsu, Hung-Chu},
  title   = {A nonlinear {Schr\"odinger} equation for capillary waves on arbitrary depth with constant vorticity},
  journal = {Journal of Fluid Mechanics},
  volume  = {1012},
  pages   = {A28},
  year    = {2025},
  doi     = {10.1017/jfm.2025.10324}
}

@article{barbieri2025bifurcation,
  author  = {Barbieri, Tommaso and Berti, Massimiliano and Maspero, Alberto and Mazzucchelli, Marco},
  title   = {Bifurcation of gravity-capillary {Stokes} waves with constant vorticity},
  journal = {Journal of Differential Equations},
  volume  = {451},
  pages   = {Paper No. 113753},
  year    = {2026},
  doi     = {10.1016/j.jde.2025.113753}
}

@article{hur2023unstable,
  author  = {Hur, Vera Mikyoung and Yang, Zhao},
  title   = {Stable and unstable capillary-gravity waves},
  journal = {Preprint, arXiv:2311.01368 [math.AP]},
  year    = {2023},
  url     = {https://arxiv.org/abs/2311.01368}
}

@article{rowan2024two,
  author  = {Rowan, James and Wan, Lizhe},
  title   = {Two-dimensional solitary water waves with constant vorticity, {Part II}: the deep capillary case},
  journal = {Preprint, arXiv:2408.03428 [math.AP]},
  year    = {2024},
  url     = {https://arxiv.org/abs/2408.03428}
}
\endgroup

\end{document}